\documentclass[11pt,a4paper,reqno]{amsart}
\usepackage{amsmath,amssymb,amsthm,amsfonts}
\usepackage[export]{adjustbox}
\usepackage{amsbsy}
\usepackage[utf8]{inputenc}
\usepackage[normalem]{ulem}
\usepackage{xspace}
\usepackage{cancel}
\usepackage[greek,english]{babel}
\usepackage{url}
\usepackage{enumerate}
\usepackage{wrapfig}
\usepackage{enumitem}
\usepackage{multicol}
\usepackage{moreenum}
\usepackage{xcolor}
\usepackage{longtable}
\usepackage{array}
\usepackage{aliascnt}
\usepackage{bm}
\usepackage{multirow}

\makeatletter
\newcommand\myitem[1][]{\item[#1]\refstepcounter{dummy}\def\@currentlabel{#1}}
\makeatother
\usepackage{subcaption}
\usepackage{comment}
\usepackage{tikz}
\usetikzlibrary{calc}
\usetikzlibrary{backgrounds}
\usepackage[bottom=2.5cm,top=2.5cm, left=2.5cm, right=2.5cm]{geometry}

\usepackage{multirow}
\usepackage{array,multirow}

\definecolor{LinkColor}{rgb}{0,0,0} 
\usepackage[colorlinks=true,linkcolor=LinkColor,citecolor=LinkColor,urlcolor= LinkColor, naturalnames, hyperindex, pdfstartview=FitH, bookmarksnumbered, plainpages]{hyperref}
\usepackage{cleveref}
\makeatletter
\newcommand{\slunlhd}{%
	\mathrel{\mathpalette\sl@unlhd\relax}%
}

\newcommand{\sl@unlhd}[2]{%
	\sbox\z@{$#1\lhd$}%
	\sbox\tw@{$#1\leqslant$}%
	\dimen@=\ht\tw@
	\advance\dimen@-\ht\z@
	\ifx#1\displaystyle
	\advance\dimen@ .2pt
	\else
	\ifx#1\textstyle
	\advance\dimen@ .2pt
	\fi
	\fi
	\ooalign{\raisebox{\dimen@}{$\m@th#1\lhd$}\cr$\m@th#1\leqslant$\cr}%
}
\makeatother

\newtheorem{innercustomthm}{Theorem}[section]

\crefname{innercustomthm}{Theorem}{Theorems}

\newtheorem{theorem}{Theorem}[section]

\newaliascnt{corollary}{theorem}
\newtheorem{corollary}[corollary]{Corollary}
\aliascntresetthe{corollary}

\newaliascnt{lemma}{theorem}
\newtheorem{lemma}[lemma]{Lemma}
\aliascntresetthe{lemma}

\newaliascnt{proposition}{theorem}
\newtheorem{proposition}[proposition]{Proposition}
\aliascntresetthe{proposition}

\theoremstyle{definition}

\newaliascnt{definition}{theorem}

\aliascntresetthe{definition}

\newaliascnt{remark}{theorem}
\newtheorem{remark}[remark]{Remark}
\aliascntresetthe{remark}

\newaliascnt{example}{theorem}

\aliascntresetthe{example}

\newcommand{\SG}[1]{\text{SG}[#1]}
\newcommand{\SL}{\operatorname{SL}}
\newcommand{\GL}{\operatorname{GL}}

\newcommand{\para}{\para\vspace{.25cm}}
\newcommand{\Aut}{\operatorname{Aut}}
\newcommand{\Out}{\operatorname{Out}}
\newcommand{\Inn}{\operatorname{Inn}}

\newcommand{\C}{\textup{C}}
\newcommand{\F}{\mathbb{F}}

\renewcommand{\O}{\operatorname{O}}
\newcommand{\Frob}{\textup{Fr}}

\newcommand{\N}{\textup{N}}

\newcommand{\fl}[1]{\textup{\bfseries [#1]}}
\renewcommand{\O}{\operatorname{O}}

\newcommand{\qand}{\quad \text{and} \quad}

\author[A. Jindal]{Anu Jindal}
\address{Department of Mathematics, Indian Institute of Technology Roorkee, Roorkee (Uttarakhand)-247667, India.}
\email{\href{mailto:anu_rj@ma.iitr.ac.in}{anu\_rj@ma.iitr.ac.in}}
\author[S. Maheshwary]{Sugandha Maheshwary}
\address{Department of Mathematics, Indian Institute of Technology Roorkee, Roorkee (Uttarakhand)-247667, India.}
\email{\href{mailto:msugandha@ma.iitr.ac.in}{msugandha@ma.iitr.ac.in}}

\thanks{The second author gratefully acknowledges the support by Science  \& Engineering Research Board (SERB),  DST (Department of Science and Technology), India (SRG/2023/000180).}

\keywords{uniformly semi-rational groups, prime graphs, N-prime graphs}
\subjclass[2020]{20D60, 20E45, 05C25, 20C05}
\date{}

\title{Prime and N-Prime graphs of solvable uniformly semi-rational groups}
\begin{document}
	\maketitle
	
\begin{abstract}
In this paper, we study the prime graph, also known as Gruenberg--Kegel graph, realizable by finite solvable uniformly semi-rational groups. This is primarily achieved by classifying the prime graphs realizable by metanilpotent groups. We also introduce a new class of groups, called MP* groups,
which extends the class of groups with Magnus property and forms a natural subclass of uniformly semi-rational groups. For MP* groups, we give a complete classification of the prime graphs realized by them. In the process, we obtain the realizability of prime graphs by uniformly semi-rational groups, assuming them to be in varied substantial subclasses of solvable groups, namely, $2$-Frobenius, metacyclic, metabelian, nilpotent-by-abelian, abelian-by-cyclic and cyclic-by-abelian. We also classify the so called N-prime graphs for all these classes of groups. The prime graph question for uniformly semi-rational groups has also been addressed.  
\end{abstract}

\section{Introduction}
 Prime graphs were originally introduced in the 1970s to investigate certain cohomological  questions associated with integral representations of finite groups. These graphs are also quite often referred to as GK-graphs, being introduced and initially studied by Karl Gruenberg and Otto Kegel. These graphs also depict various interesting properties of a group and at times determine the group or its type. These graphs have been studied from various  perspectives (\cite{AKK10}, \cite{ZM12}, \cite{BC15}, \cite{GKLNS15}, \cite{GS19},  \cite{CM22},  \cite{GV22}, \cite{BKMdR24}, \cite{DGLdR24}, \cite{KPSZ25}, \cite{KMRRY25}, \cite{DGLdR25}). Quite recently, in \cite{PdRV25}, N-prime graphs of  a group were defined, which are closely related to prime graphs and encode relatively more information of the group.

 Throughout this paper, all groups are assumed to be finite. For a group $G$, we denote its prime graph and N-prime graph by $\Gamma(G)$ and $\Gamma_N(G)$, respectively. The vertices of $\Gamma(G)$ and $\Gamma_N(G)$ are the primes dividing the order of $G$. For a group $G$, $\Gamma(G)$ is the simple undirected graph, in which primes $p$ and $q$ are joined by an edge $p-q$, if and only if $G$ contains an element of order $pq$ whereas, $\Gamma_N(G)$
 is the simple directed graph which has the arc $q \rightarrow p$ if and only if there exists an element $x$ of order $p$ in $G$ such that $\langle x \rangle$ is normalized by an element $y$ of order $q$ in $G$. For an undirected graph $\bm{\Gamma}$ (respectively, a directed graph $\bm{\Gamma_N}$),
 we often say that $G$ realizes $\bm{\Gamma}$ (respectively, $\bm{\Gamma_N}$), equivalently $\bm{\Gamma}$ (respectively, $\bm{\Gamma_N}$) is realized by $G$, if $\Gamma(G)=\bm{\Gamma}$ (respectively, $\Gamma_N{(G)}=\bm{\Gamma_N}$). For a class $C$ of groups, $\bm{\Gamma}$ (respectively,  $\bm{\Gamma_N}$) is said to be realizable by $C$, if there exists a group $G\in C$ such that $\Gamma(G)=\bm{\Gamma}$ (respectively, $\Gamma_N{(G)}=\bm{\Gamma_N}$).
 
  One of the problems that has evolved in last few years is to determine the realizability of a graph as the prime graph of a group belonging to a particular class of groups. The classes considered so far have quite a restricted set of vertices, with at most $3$ or $4$ elements. Hence, apparently, there are finitely many possibilities of such prime graphs and working on a particular class of groups should restrict it even further. But, surprisingly what seems as a naive question, is a non-trivial problem and hard to address completely for a given fixed class. In this article, we proceed in similar direction. We study prime and N-prime graphs realizable by uniformly semi-rational groups, a class of groups which evolved as a generalization of well known class of rational groups. 

Recall that a group $G$ is said to be rational, if all the character values of $G$ are rational and it turns out that for these groups, every element $x\in G$ is such that it satisfies $x^j$ is conjugate to  $x$,  whenever $j$ is relatively prime to the order of $x$. Now, if for a group $G$,  there exists an integer $m$ relatively prime to the exponent of $G$, such that, for every $x\in G$ and every integer $j$ relatively prime to the order of $x$, the element $x^j$ is conjugate to either $x$ or $x^m$, then $G$ is said to be uniformly semi-rational. In particular, if $m=-1$, such groups are called inverse semi-rational, and are popularly known as cut groups because of the fact that for an inverse semi-rational group $G$, the central units of its integral group ring $\mathbb{Z}G$ are trivial. In \cite{BKMdR24}, \cite{DGLdR24} and \cite{DGLdR25}, the authors investigated the prime graphs of solvable rational and inverse semi-rational groups. Continuing further, in this article we consider uniformly rational groups and study prime graphs as well as N-prime graphs for these groups.

The paper is organized as follows: After setting up requisite notation, we collect some \linebreak results and observations on uniformly semi-rational groups, which are used in rest of the paper. Many of these results are generalizations of the analogous results known for inverse semi-rational groups. In Section~$3$, we classify all prime graphs realizable by solvable Frobenius groups and $2$-Frobenius groups. It may be noted that Frobenius and 2-Frobenius groups form an important class of groups in the study of prime graphs, because of an important result which states that if $G$ is solvable then $\Gamma(G)$ is disconnected if and only if $G$ is Frobenius or $2$-Frobenius (see \cite{Wil81}).  In Section~$4$, we classify the prime graphs of metanilpotent uniformly semi-rational groups and its subclasses, such as nilpotent-by-abelian, metabelian, abelian-by-cyclic, cyclic-by-abelian, and metacyclic groups. We classify all prime graphs realizable by metanilpotent uniformly semi-rational groups, with the exception of three remaining graphs. Moreover, two of these three graphs are simultaneously realizable or simultaneously non-realizable. Consequently, the classification is reduced to only two unresolved cases. For all the subclasses stated above, we provide a complete classification of the realizable prime graphs. In Section~$5$, we introduce the class of MP* groups, which extends the class of groups with Magnus property and forms a subclass of uniformly semi-rational groups.  Studying prime graphs of MP* groups naturally brings us a step closer to that of uniformly semi-rational groups. We prove that all MP* groups are metanilpotent and classify all prime graphs realizable by MP* groups. In Section~$6$, we classify the N-prime graphs for the above classes. In Section~$7$, we attempt the prime graph question for uniformly semi-rational groups.

\section*{Notation}

We mostly use the standard notation but state here explicitly for ease of the reader. \\

The symbol $\varphi$ is used to denote Euler's totient function. For intgers $a$ and $b$, $a$ divides $b$ is symbolised as $a\mid b$ and $\operatorname{gcd}(a,b)$ denotes the greatest common divisor of $a$ and $b$. 

For $n\in \mathbb{N}$, we denote by $C_n$ a cyclic group of order $n$. The symmetric and alternating groups on $n$ symbols are denoted by $S_n$ and $A_n$, respectively. The dihedral and quaternion groups of order $n$ are denoted by $D_n$ and $Q_n$, respectively, while $QD_{2^n}$ denotes the quasidihedral group of order $2^n$. As is customary, the letter $p$ will often denote a prime. Furthermore, $\SL(n,p)$ and $\GL(n,p)$ denote the special linear group and the general linear group of degree $n$ over the finite field $\mathbb{F}_p$ of order $p$, respectively. For $q=p^n$, if $V$ denotes $\mathbb{F}_q$-vector space, then $\GL(V)$ is the set of all invertible linear transformations from $V$ to itself. 

The cardinality of a set $X$ is denoted by $|X|$. Throughout, $G$ denotes a finite group whose order and exponent are denoted by $|G|$ and $\operatorname{exp}(G)$ respectively. For $g,h\in G$, we write $|g|$ for the order of $g$, $g^h$ for $h^{-1}gh$ and $g^G$ for conjugacy class of $g$ in $G$. By $H\leq G$ ($H\unlhd G$), we mean that $H$ is a subgroup (normal subgroup) of $G$. The prime spectrum of $G$, that is, the set of primes dividing $|G|$, is denoted by $\pi(G)$, and $\Aut(G)$ denotes the automorphism group of $G$. The center of $G$ is denoted by $\mathcal{Z}(G)$. For a subset $X$ of $G$, the normalizer and the centralizer of $X$ in $G$ are denoted by $N_G(X)$ and $C_G(X)$, respectively. For $p\in \pi(G)$, let $G_p$ denote a Sylow $p$-subgroup of $G$. If $\pi$ is a set of primes, a Hall $\pi$-subgroup  of $G$ and a Hall $\pi'$-subgroup of G will be denoted by  $G_{\pi}$ and $G_{\pi'}$ respectively. By Hall's Theorem; see, for instance, \cite[9.1.7]{Rob82}, every solvable group contains a Hall $\pi$-subgroup for each set of primes $\pi$, and any two Hall $\pi$-subgroups are conjugate in $G$. In particular, they are isomorphic. Thus, for a solvable group $G$, the subgroup $G_\pi$ is unique up to isomorphism.

Every semi-direct product $G\rtimes H$ considered in this paper is assumed not to be a direct product. Moreover, the notation $G=N\rtimes_{\mathrm{Fr}}K$ means that $G$ is a Frobenius group uniquely determined, up to isomorphism, by the isomorphism types of its kernel $N$ and complement $K$. $\SG{n,m}$ denotes the  $m$-th group of order $n$ in the Small Groups Library of \textsf{GAP} \cite{GAP4}.

\section{Preliminaries}
As mentioned above, the aim of this article is to study the prime and the N-prime graphs of uniformly semi-rational groups. For brevity, uniformly semi-rational groups shall be called as USR groups. To determine the possible prime graphs realizable by solvable USR groups, we begin by stating the prime spectrum of such groups.

\begin{proposition}[{\cite[Theorem~4.14]{CM25}}] If $G$ is a finite solvable USR group, then \linebreak $\pi(G) \subseteq \{2, 3, 5, 7, 13\}$.
\end{proposition}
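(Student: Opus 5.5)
The plan follows the pattern of Gow's theorem for solvable rational groups, which gives $\pi(G)\subseteq\{2,3,5\}$, and Bächle's extension to solvable inverse semi-rational groups, which gives $\pi(G)\subseteq\{2,3,5,7\}$. The starting point is the Galois-theoretic form of the uniformly semi-rational condition. Let $n=\exp(G)$ and write $\mathcal{G}=\operatorname{Gal}(\mathbb{Q}(\zeta_n)/\mathbb{Q})\cong(\mathbb{Z}/n\mathbb{Z})^\times$. For $x\in G$, let $B_x\le(\mathbb{Z}/|x|\mathbb{Z})^\times$ consist of those $j$ with $x^j$ conjugate to $x$. The condition says $(\mathbb{Z}/|x|\mathbb{Z})^\times=B_x\cup mB_x$. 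Hence $[(\mathbb{Z}/|x|\mathbb{Z})^\times:B_x]\le 2$.

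Since $B_x\cong N_G(\langle x\rangle)/C_G(\langle x\rangle)$, this yields a concrete constraint. For every element $x$ of prime order $p$, the group $N_G(\langle x\rangle)$ induces on $\langle x\rangle$ a subgroup of $\Aut(C_p)\cong C_{p-1}$ of index at most $2$. Moreover, every quotient of $G$ is again USR with the same $m$, since images of conjugate elements remain conjugate.

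Next reduce to a minimal configuration. Suppose $p\in\pi(G)$ with $p\notin\{2,3,5,7,13\}$, and take $G$ of minimal order among solvable USR groups with $p\mid|G|$. Passing to quotients, we may assume $G$ has a unique minimal normal subgroup $V$. This $V$ is an elementary abelian $r$-group, and $G/V$ has no prime $p$ outside the list unless $r=p$. So we may take $V=\mathbb{F}_p^k$, a faithful irreducible $H$-module where $H=G/V$, and $H$ is a solvable USR group with $\pi(H)\subseteq\{2,3,5,7,13\}$ together with possibly $p$ itself. In fact $p\nmid |H|$, because $O_p(H)=1$ by faithfulness and irreducibility, and minimality applied to $H$ rules out $p$ in the remaining cases.

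For $0\ne v\in V$, the stabilizer of $\langle v\rangle$ in $H$ must act on $\langle v\rangle$ through a subgroup of index $\le2$ of $\mathbb{F}_p^\times$. Therefore $H$ acts on the projective space of $V$ so that every line stabilizer induces at least $(p-1)/2$ scalars on that line. In particular $(p-1)/2$ divides $|H|$, so every prime divisor of $(p-1)/2$ lies in $\{2,3,5,7,13\}$.

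The main obstacle is turning this into a contradiction via bounds on solvable linear groups. First, the $H$-orbits on nonzero vectors have length $|H|/|C_H(v)|$, and the normalizers of lines must supply the scalars above. From this one wants $|H|\ge (p^k-1)/2$, or at least that $H$ has few orbits on $V\setminus\{0\}$. Then one uses the Palfy–Wolf bound $|H|\le p^{ck}$ with $c\approx 2.25$, or better Manz–Wolf results on solvable groups acting with few orbits (the "half-transitive" or $\le2$-orbit case). These place $H$ inside $\Gamma L(1,p^k)$, apart from finitely many exceptional small cases. In the semilinear case $H\le \mathbb{F}_{p^k}^\times\rtimes C_k$, and the index-$2$ condition on all element classes of $H$ itself, applied to an element of order a Zsigmondy prime $\ell$ of $p^k-1$, forces $\ell-1\mid 2k$. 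Since $\ell\equiv1\pmod k$, this gives $\ell\le 2k+1$ and pins down $p^k$.

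I would finish by a finite check. It uses the factorization constraints on $p-1$ and on Zsigmondy primes, with a Mihăilescu-type argument where $p^k-1$ is a $\{2,3,5,7,13\}$-number, and excludes each surviving candidate $p\ge11$, $p\ne13$. The finite exceptional cases would be checked directly, possibly with \textsf{GAP}. The delicate part is the exceptional small-dimensional non-semilinear cases, such as extraspecial-type $H$. That is where $13$ appears: for example $3^{3}-1=26$ gives a semilinear action of order $13\cdot 3\cdot 2$ that is compatible with $m\ne\pm1$. So the argument must show that $p=11$, $p=17$, etc. cannot arise, rather than also excluding $13$.
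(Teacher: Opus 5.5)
The paper does not prove this statement; it quotes it from Theorem~4.14 of CM25, so your outline has to stand on its own. Your reduction is sound. Quotients of USR groups are USR with the same $m$. A minimal counterexample has the form $G=V\rtimes H$, where $V\cong\mathbb{F}_p^k$ is the unique minimal normal subgroup, $V$ is a faithful irreducible $H$-module, and $p\nmid|H|$. Moreover $B_G(v)\cong N_H(\langle v\rangle)/C_H(v)$ has index at most $2$ in $\mathbb{F}_p^\times$, so $(p-1)/2$ divides $|H|$.

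\textbf{The gap.} The step meant to produce the contradiction does not work. The USR condition for $v\in V$ only relates $v$ to the generators of $\langle v\rangle$. It constrains $N_H(\langle v\rangle)$ on the single line $\langle v\rangle$ and links no two lines. So neither $|H|\ge (p^k-1)/2$ nor ``few orbits on $V\setminus\{0\}$'' follows. For instance, let $H=\mathbb{F}_p^\times\times K$ with $K$ irreducible on $V$. Then every element of $V$ is rational in $V\rtimes H$, for every $p$. Yet $|H|=(p-1)|K|$ can be far below $(p^k-1)/2$: take $k=2$ and $K=C_3\le\mathbb{F}_{p^2}^\times$ with $3\mid p+1$. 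The number of $H$-orbits on $V\setminus\{0\}$ is then unbounded.

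The two-orbit reduction to Huppert/Foulser and $\Gamma L(1,q)$ is what the paper uses for MP* groups in \Cref{PrimitiveGroup}. There, equal normal closures force any two non-zero vectors of an irreducible $V$ to be conjugate up to an $m$-th power. Under USR alone that reduction is not available. Also, P\'alfy--Wolf is an upper bound on $|H|$, so it could not contradict a lower bound of size about $|V|/2$ in any case.

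\textbf{What is missing.} The example shows where a contradiction must come from: semi-rationality of elements of $H$, and of mixed elements $vh$ with $h\in C_H(v)$, not from $V$ alone. In the example, a generator $z$ of the central scalar group has order $p-1$ and acts fixed-point-freely on $V$. Hence $N_G(\langle z\rangle)=C_G(z)=H$ and $B_G(z)=1$, and semi-rationality forces $\varphi(p-1)\le 2$. Your outline has no argument of this kind for a general $H$.

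Without such an argument there is no bound on $p$, and the closing ``finite check'' is not finite. The primes with $(p-1)/2$ a $\{2,3,5,7,13\}$-number ($11,17,19,29,31,37,41,43,\dots$) include all Pierpont primes $2^a3^b+1$, conjecturally infinitely many. Your $\Gamma L(1,p^k)$/Zsigmondy step also presupposes the unavailable few-orbit reduction; otherwise $H$ need not contain elements of Zsigmondy order at all.

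\textbf{The remark on $13$.} Your remark on where $13$ enters is wrong. In $\mathbb{F}_{27}\rtimes\Gamma L(1,27)$, an element $h$ of order $13$ satisfies $|B_G(h)|=3<\varphi(13)/2=6$, so it is not semi-rational. The prime $13$ appears as the characteristic of the module, as in $C_{13}\rtimes_{\mathrm{Fr}}C_6$, where $C_6$ has index $2$ in $\Aut(C_{13})$.
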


Rather, we have limited choices for the possible prime spectra of solvable USR groups. For this, we consider $B_G(g) := N_G(\langle g \rangle) / C_G(g)$, for an element $g$ of a group  $G$, as defined in \cite{CD10}. The map associating $x \in N_G(\langle g \rangle)$ with the automorphism of $\langle g \rangle$ mapping $g$ to $g^x$ induces an injective homomorphism $\iota_g: B_G(g) \rightarrow \Aut(\langle g \rangle)$. Thus $B_G(g)$ is canonically isomorphic to a subgroup of $\Aut(\langle g \rangle)$. An element $g \in G$ is called semi-rational if each generator of $\langle g \rangle$ is conjugate to $g$ or $g^{m_g}$ in $G$, where $m_g$ is an integer coprime to $|g|$.  If $G$ is a USR group, then there exists an integer $m$ coprime to exponent of $G$ such that, for every $g \in G$, each generator of $\langle g \rangle$ is conjugate to $g$ or $g^m$ in $G$. In this vein, $G$ is $m$-semi-rational and using \cite[Proposition~4.5]{CM25}, we have the following:

	\begin{proposition} \label{equivalentUSR}
The following statements are equivalent for a finite group $G$:
		\begin{itemize}
\item[(i)]$G$ is an $m$-semi-rational group.
\item[(ii)] For every $g\in G$ either $|B_G(g)|=\varphi(|g|)$ or $|B_G(g)|=\frac{\varphi(|g|)}{2}$ and in the latter case $g$ and $g^{m}$ are not conjugate in $G$.
\item[(iii)] For each $g \in G$, $\Aut(\langle g \rangle) \simeq B_G(g)\langle \tau \rangle$ where $\tau(g) = g^m$ and $\tau^2 \in B_G(g)$.

		\end{itemize}
	\end{proposition}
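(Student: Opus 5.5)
The plan is to reduce everything to the fixed element $g$, using two standard facts. First, the injective homomorphism $\iota_g:B_G(g)\to\Aut(\langle g\rangle)$ identifies $B_G(g)$ with the set of residues $j\in(\mathbb{Z}/|g|\mathbb{Z})^\times$ such that $g^j$ is conjugate to $g$ in $G$. Indeed, if $g^x=g^j$ then $x\in N_G(\langle g\rangle)$. Second, $\Aut(\langle g\rangle)\simeq(\mathbb{Z}/|g|\mathbb{Z})^\times$ is abelian of order $\varphi(|g|)$, so $B_G(g)$ is a normal subgroup. Its cosets correspond exactly to the $G$-conjugacy classes meeting the set of generators of $\langle g\rangle$: $g^i$ and $g^j$ are conjugate if and only if $j i^{-1}\in B_G(g)$. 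Here $\tau$ denotes the automorphism $g\mapsto g^m$, which is well defined because $m$ is coprime to $\exp(G)$, hence to $|g|$.

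For (i)$\Rightarrow$(ii), fix $g$. By (i), every generator $g^j$ is conjugate to $g$ or to $g^m$. Hence the generators of $\langle g\rangle$ fall into at most two classes, i.e.\ $B_G(g)$ has index $1$ or $2$ in $\Aut(\langle g\rangle)$, which gives $|B_G(g)|=\varphi(|g|)$ or $\varphi(|g|)/2$. In the index-$2$ case there are exactly two classes of generators, so $g^m$ must lie in the class not containing $g$; that is, $g$ and $g^m$ are not conjugate.

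For (ii)$\Rightarrow$(iii), fix $g$. If $|B_G(g)|=\varphi(|g|)$, then $B_G(g)=\Aut(\langle g\rangle)$, so $B_G(g)\langle\tau\rangle=\Aut(\langle g\rangle)$ and $\tau^2\in B_G(g)$ trivially. Otherwise the index is $2$ and $\tau\notin B_G(g)$, because $g^m$ is not conjugate to $g$. Then $B_G(g)\langle\tau\rangle$ properly contains a subgroup of index $2$ and so equals $\Aut(\langle g\rangle)$. Moreover $\tau^2\in B_G(g)$, since the quotient $\Aut(\langle g\rangle)/B_G(g)$ has order $2$.

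For (iii)$\Rightarrow$(i), take any generator $g^j$ of $\langle g\rangle$. By (iii) the corresponding automorphism is $\beta\tau^k$ with $\beta\in B_G(g)$. Since $\tau^2\in B_G(g)$ and the group is abelian, we may take $k\in\{0,1\}$. If $k=0$, then $g^j=g^\beta$ is conjugate to $g$. If $k=1$, then $g^j=(g^m)^{\beta}$, and $\beta$ is realised by conjugation by some $x\in N_G(\langle g\rangle)$, which maps $g^m$ to $(g^x)^m$; hence $g^j$ is conjugate to $g^m$. As this holds for all $g$ with the same $m$, $G$ is $m$-semi-rational.

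The only point needing care is the bookkeeping in (iii)$\Rightarrow$(i): checking that conjugation by $x$ commutes with the power map, so that $\beta\tau$ applied to $g$ is genuinely a conjugate of $g^m$. Commutativity of $\Aut(\langle g\rangle)$ handles the order of composition. Otherwise the argument is routine, and it is essentially \cite[Proposition~4.5]{CM25} specialised to a single fixed $m$.
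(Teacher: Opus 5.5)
Your proof is correct. You identify $B_G(g)$ with $\{j : g^j \text{ conjugate to } g\}$ and read its cosets in $(\mathbb{Z}/|g|\mathbb{Z})^\times$ as the conjugacy classes of generators of $\langle g\rangle$, which gives all three implications, with $\gcd(m,\exp(G))=1$ as the standing hypothesis that makes $\tau$ an automorphism. The paper gives no argument of its own and simply invokes \cite[Proposition~4.5]{CM25}; your write-up is a self-contained version of that result for a single fixed $m$.
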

	
On the lines of \cite[Lemma~2.6]{BKMdR24}, we have the following:
		\begin{lemma}\label{PrimePower}
		Let $G$ be a finite group and let $g \in G$ be an element of order $p^n$ or $2p^n$, where $p$ is an odd prime. The element $g$ is  semi-rational in $G$, if and only if $B_G(g)$ contains an element of order $p^{n-1}(\frac{p-1}{2})$. 
	\end{lemma}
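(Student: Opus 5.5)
By Proposition~\ref{equivalentUSR}, the element $g$ is semi-rational in $G$ exactly when $\Aut(\langle g\rangle)=B_G(g)\langle\tau\rangle$ with $\tau^2\in B_G(g)$ for a suitable $\tau$. Equivalently, $B_G(g)$ has index at most $2$ in $\Aut(\langle g\rangle)$. The plan is to reduce the lemma to a statement about subgroups of $\Aut(\langle g\rangle)\cong (\mathbb{Z}/|g|\mathbb{Z})^\times$. I will first justify this index reformulation. If $[\Aut(\langle g\rangle):B_G(g)]\le 2$, then any $\tau$ outside $B_G(g)$ (or $\tau=1$) works, and every generator $g^j$ is conjugate to $g$ or to $g^{m_g}$ with $\tau(g)=g^{m_g}$. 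Conversely, if every generator is conjugate to $g$ or $g^{m_g}$, then the $B_G(g)$-orbits on generators, which correspond to cosets of $B_G(g)$, number at most two.

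The key structural input is that $\Aut(\langle g\rangle)$ is cyclic. For $|g|=p^n$ with $p$ odd, $(\mathbb{Z}/p^n)^\times$ is cyclic of order $p^{n-1}(p-1)$. For $|g|=2p^n$ we have $(\mathbb{Z}/2p^n)^\times\cong(\mathbb{Z}/2)^\times\times(\mathbb{Z}/p^n)^\times\cong(\mathbb{Z}/p^n)^\times$, which is again cyclic of the same order $\varphi(p^n)=p^{n-1}(p-1)$. In a cyclic group $A$ of order $N=p^{n-1}(p-1)$, with $N$ even since $p$ is odd, there is a unique subgroup of each order dividing $N$. Hence a subgroup $B\le A$ has index at most $2$ if and only if $|B|\in\{N, N/2\}$. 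This holds if and only if $B$ contains the unique subgroup of order $N/2=p^{n-1}\frac{p-1}{2}$, since any subgroup of index $1$ or $2$ contains it.

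It remains to translate this into elements. Because $B_G(g)$ is a subgroup of a cyclic group, it is itself cyclic. If $|B_G(g)|\in\{N,N/2\}$, then $B_G(g)$ has a cyclic subgroup, hence an element, of order $N/2$. Conversely, if $B_G(g)$ contains an element of order $N/2$, then $N/2$ divides $|B_G(g)|$, which divides $N$, so the index is $1$ or $2$. Combining this with the index reformulation gives the lemma.

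The only delicate point is the equivalence between semi-rationality and index at most $2$ of $B_G(g)$, in particular the orbit/coset correspondence. The bijection here sends the coset $bB_G(g)$ to the conjugacy class of the generator $b(g)$. I would establish it directly from the definition of $\iota_g$ rather than rely on the USR statement of Proposition~\ref{equivalentUSR}, since that proposition is phrased for a global $m$ and not for a single element. The cyclicity computations are standard.
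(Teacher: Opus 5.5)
Your proof is correct. Semi-rationality of $g$ is equivalent to $[\Aut(\langle g\rangle):B_G(g)]\le 2$, because $G$-classes of generators of $\langle g\rangle$ correspond to cosets of $B_G(g)$, and the cyclicity of $\Aut(\langle g\rangle)$, of order $p^{n-1}(p-1)$, turns this into the existence of an element of order $p^{n-1}\frac{p-1}{2}$ in $B_G(g)$. The paper gives no separate proof, only pointing to the analogous \cite[Lemma~2.6]{BKMdR24}; your argument is the natural adaptation of it, and you were right to prove the coset--class correspondence directly rather than rely on \Cref{equivalentUSR}, which is stated for a global $m$.
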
	
	
It thus follows that if $G$ has an element $g$ of order $5, 7$ or $13$ which is semi-rational, then $B_G(g)$ and hence $G$ has an element of order $2, 3$ and $6$ respectively. Consequently, the possibilities of prime spectra of USR groups are restricted.
	\begin{lemma}\label{PrimeSpectrum}
	If $G$ is a finite solvable USR group, then $\pi(G)$ is one of the following sets: $\{2\}, \{3\}, \{2, 3\}, \{2, 5\}, \{3, 7\}, \{2, 3, 5\}, \{2, 3, 7\}, \{2, 3, 13\}, \{2, 3, 5, 7\}, \{2, 3, 5, 13\},
	\{2, 3, 7, 13\}, \linebreak \{2, 3, 5, 7, 13\}$.
\end{lemma}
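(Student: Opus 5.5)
The plan is to start from the proposition of \cite[Theorem~4.14]{CM25}, which gives $\pi(G)\subseteq\{2,3,5,7,13\}$, and to cut this down to the twelve listed sets using Lemma~\ref{PrimePower} and the remark following it. Since $G$ is USR, every element of $G$ is semi-rational. Suppose $p\in\pi(G)$ with $p\in\{5,7,13\}$. By Cauchy's theorem, pick $g\in G$ of order $p$. Lemma~\ref{PrimePower} with $n=1$ says that $B_G(g)=N_G(\langle g\rangle)/C_G(g)$ contains an element of order $(p-1)/2$, which is $2$, $3$ or $6$ respectively. Since $B_G(g)$ is a section of $G$, we get that $(p-1)/2$ divides $|G|$. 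This gives three closure rules:
\[
5\in\pi(G)\Rightarrow 2\in\pi(G),\qquad 7\in\pi(G)\Rightarrow 3\in\pi(G),\qquad 13\in\pi(G)\Rightarrow 2,3\in\pi(G).
\]

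Next we enumerate the subsets of $\{2,3,5,7,13\}$ closed under these rules. If $13\in\pi(G)$, then $\{2,3,13\}\subseteq\pi(G)$, and we may add any subset of $\{5,7\}$. This gives $\{2,3,13\}$, $\{2,3,5,13\}$, $\{2,3,7,13\}$ and $\{2,3,5,7,13\}$.

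If $13\notin\pi(G)$, the closed subsets are:
\[
\{2\},\ \{3\},\ \{2,3\},\ \{2,5\},\ \{3,7\},\ \{2,3,5\},\ \{2,3,7\},\ \{2,3,5,7\},\ \{2,5,7\},\ \{2,5,7,3\}.
\]
The set $\{2,5,7\}$ is not closed, since $7$ forces $3$, and the last entry repeats $\{2,3,5,7\}$. The trivial group is excluded, or treated as a degenerate case. The resulting list matches the statement exactly.

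The only real content to check is the non-existence of the remaining sets, such as $\{5\}$, $\{7\}$, $\{2,7\}$, $\{3,5\}$, $\{3,13\}$ and $\{2,13\}$, and each of these violates one of the three closure rules. So the main obstacle is not in this step at all but lies in the cited results: \cite[Theorem~4.14]{CM25}, which needs solvability to exclude other primes, and Lemma~\ref{PrimePower}. Within the present argument, the one point needing care is the passage from ``$B_G(g)$ has an element of order $k$'' to ``$k\mid|G|$''. This holds because $|B_G(g)|$ divides $|N_G(\langle g\rangle)|$, which divides $|G|$.
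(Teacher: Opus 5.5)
Your proposal is correct and follows the paper's argument exactly: start from $\pi(G)\subseteq\{2,3,5,7,13\}$, then apply \Cref{PrimePower} to a (semi-rational) element of order $5$, $7$ or $13$, which forces elements of order $2$, $3$ or $6$ in $B_G(g)$ and hence in $G$; the twelve listed sets are exactly what survives. The only thing to tidy is your displayed enumeration, which should list just the eight sets not containing $13$, without the non-closed $\{2,5,7\}$ and the duplicate $\{2,5,7,3\}$ that you then retract.
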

Whether or not there exist USR groups for each prime spectrum listed above cannot be \linebreak commented at the moment. However, on the lines of \cite[Lemma~3.2]{BKMdR24} and using \Cref{equivalentUSR}, we obtain the following crucial result on the edges of the prime graph of a finite USR group.

	\begin{theorem}\label{Edges}
		Let $\bm{\bm{\Gamma}}$ be the prime graph of a finite USR group $G$.
		\begin{itemize}
			\item[(i)] If $2 - 7 \in \bm{\bm{\Gamma}}, ~3 - 5 \in \bm{\Gamma},~ 3 - 7 \in \bm{\Gamma} $,~ $5 - 7 \in \bm{\Gamma}$ or $13 \in \pi(G)$, then $2 - 3 \in \bm{\Gamma}$.
			\item[(ii)] If $5 - 7 \in \bm{\Gamma}$, then $2 - 7 \in \bm{\Gamma}$ and $3 - 5 \in \bm{\Gamma}$.
			\item[(iii)] If $5 - 13 \in \bm{\Gamma}$, then $2 - 5 \in \bm{\Gamma},~3 - 5 \in \bm{\Gamma}$ and $2 - 13 \in \bm{\Gamma}$.
			\item[(iv)] If $7 - 13 \in \bm{\Gamma},$ then $2 - 7\in \bm{\Gamma}, ~3 - 7 \in \bm{\Gamma}$ and $3 - 13 \in \bm{\Gamma}$.
			
		\end{itemize}
	\end{theorem}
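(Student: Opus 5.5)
The approach is local: every edge $p-q$ comes from an element $g\in G$ of order $pq$, and by \Cref{equivalentUSR}(ii) the group $B_G(g)$ has index at most $2$ in $\Aut(\langle g\rangle)\cong C_{p-1}\times C_{q-1}$. I will lift suitable elements of $B_G(g)$ to elements of $N_G(\langle g\rangle)$ and read off new elements of composite order, following the pattern of \cite[Lemma~3.2]{BKMdR24}.

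\textbf{Basic mechanism.} Write $g=g_pg_q$. Since $B_G(g)$ has index $\le 2$, the subgroup $B_G(g)\cap(\Aut(\langle g_p\rangle)\times 1)$ has index $\le 2$ in the cyclic group $C_{p-1}$. It therefore contains the subgroup of order $(p-1)/2$. Let $r$ be a prime dividing $(p-1)/2$. Pick $x\in N_G(\langle g\rangle)$ whose image in $B_G(g)$ has order $r$ and acts trivially on $\langle g_q\rangle$, and replace $x$ by its $r$-part; this does not change the image, since the image has $r$-power order. Then $y=x$ is a nontrivial $r$-element centralizing $g_q$, so $yg_q$ has order divisible by $rq$, and a suitable power gives an element of order $rq$. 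This yields the following cases.
\begin{itemize}
\item $2-7$ gives $3-2$, using $p=7$, $r=3$, $q=2$.
\item $3-5$ gives $2-3$, using $p=5$, $r=2$.
\item $5-7$ gives $3-5$ (from $p=7$) and $2-7$ (from $p=5$). This is (ii), and then $2-3$ follows.
\item $5-13$ gives $2-13$ (from $p=5$) and both $2-5$ and $3-5$ (from $p=13$, since $(13-1)/2=6$). This is (iii).
\item $7-13$ gives $3-13$ (from $p=7$) and $2-7$ and $3-7$ (from $p=13$). This is (iv).
\end{itemize}

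\textbf{The prime $13$.} If $13\in\pi(G)$, take $h$ of order $13$. By \Cref{PrimePower}, $B_G(h)$ contains an element of order $6$. Lifting it to $N_G(\langle h\rangle)$ and taking a power gives an element of order $6$ in $G$, hence $2-3$.

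\textbf{Main obstacle: $3-7\Rightarrow 2-3$.} Here the mechanism only produces $3$-elements, because $(3-1)/2=1$ and $(7-1)/2=3$. Instead, $B_G(g)\le C_2\times C_6$ has order at least $6$, so it contains an involution. I lift it to a $2$-element $y\in N_G(\langle g\rangle)$ with $y^2\in C_G(g)$, and split into cases.
\begin{itemize}
\item If $y^2\neq 1$, then $y^2$ commutes with $g$, so $G$ contains an element whose order is divisible by $2\cdot 3$.
\item If $y$ centralizes $g_3$, we get $2-3$ directly.
\item If $y$ centralizes $g_7$, we get $2-7$, and then $2-3$ by the case above.
\end{itemize}
The remaining case is that $y$ is an involution inverting both $g_3$ and $g_7$, with $B_G(g)$ of index exactly $2$, namely $B_G(g)=\langle(-1,-1)\rangle\times(1\times C_3)$. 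To rule this out I would use the uniformity of $m$. Proposition~\ref{equivalentUSR}(iii) forces the image of $m$ to lie outside $B_G(g)$. I would then compare this with the same condition for the elements $g_3$, $g_7$ and $g^k$, and, if needed, with the $7$-part of $N_G(\langle g_7\rangle)$ acting on Sylow $3$-data, to force a contradiction or an element of order $6$. I expect this case to be the delicate step.
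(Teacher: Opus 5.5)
\paragraph{Verdict.} Apart from one case, your proposal is the intended local argument. The paper gives no proof beyond saying the result follows along the lines of \cite[Lemma~3.2]{BKMdR24} using \Cref{equivalentUSR}. Your ``index $\le 2$ in each coordinate'' mechanism correctly yields $2-3$ from $2-7$ and from $3-5$, all of (ii)--(iv), and $2-3$ from $13\in\pi(G)$.

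\paragraph{The gap.} The implication $3-7\Rightarrow 2-3$ is left open in the residual case. Moreover, the plan you sketch cannot work in its ``contradiction'' form, because that configuration genuinely occurs in USR groups. Take $G=\langle a,w \mid a^{21}=w^6=1,\ a^w=a^{17}\rangle$.
\begin{itemize}
\item $B_G(a)=\langle 17\rangle=\{1,4,5,16,17,20\}$, which has index $2$ in $(\mathbb{Z}/21\mathbb{Z})^\times$.
\item With $m=29$ we have $m\equiv 8\notin B_G(a)$. Elements of order $3$ or $6$ are handled by $m\equiv -1 \pmod 6$, and elements of order $7$ are rational. So $G$ is USR.
\item The only involution in $B_G(a)$ is $17^3\equiv -1$, which inverts both $a^7$ and $a^3$.
\item The lifted involution $w^3$ satisfies $w^6=1$ and centralizes neither $a^7$ nor $a^3$.
\end{itemize}
So none of your three subcases applies, and no comparison of $m$ with $g_3$, $g_7$ or $g^k$ can give a contradiction. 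Yet $2-3\in\Gamma(G)$, witnessed by $w$.

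\paragraph{The fix.} Lift an element of order $6$ rather than an involution.
\begin{itemize}
\item Since $|B_G(g)|\in\{6,12\}$ and $B_G(g)\le C_2\times C_6\cong C_2^2\times C_3$, the group $B_G(g)$ contains the unique subgroup of order $3$ and some involution. Hence it contains an element $\beta$ of order $6$. In your bad case, $B_G(g)$ is itself cyclic of order $6$.
\item If $t\in N_G(\langle g\rangle)$ maps to $\beta$, then $t^k\in C_G(g)$ forces $6\mid k$. So $6\mid |t|$, and $t^{|t|/6}$ has order $6$, giving $2-3\in\bm{\Gamma}$.
\end{itemize}
This is the same device you already used for the prime $13$, and it makes the entire case analysis for $3-7$ unnecessary.
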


Further, on the lines of \cite[Lemma~3.4]{BKMdR24}, we obtain that the exponent of an abelian Sylow $p$-subgroup of a finite USR group divides $p$ or $4$. Using this, we obtain the following generalization of \cite[Lemma~3.5]{BKMdR24}.

	\begin{proposition}\label{CyclicSylow}
Let $G$ be a finite USR group and let $G_p$ be a normal Sylow $p$-subgroup of $G$. If $q$ belongs to $\pi(G)$ is such that $p \neq q$ and $G$ does not contain an element of order $pq$, then Sylow subgroup $G_q$ is one of the following:
\begin{itemize}
	\item[(i)] a cyclic group of order dividing $4$ or $q$,
	\item[(ii)] a quaternion group of order $8$ or $16$.
	
\end{itemize} 
	\end{proposition}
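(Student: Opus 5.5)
The plan is to show that $G_q$ acts fixed-point-freely on $G_p$, and then combine the classification of Frobenius complements with the exponent restriction on abelian Sylow subgroups of USR groups.

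First, since $G_p \unlhd G$ and $G$ has no element of order $pq$, no nontrivial element $y \in G_q$ centralizes a nontrivial element of $G_p$. Indeed, otherwise $xy$ would have order $pq$ for some $1\neq x\in G_p$ of order $p$ with $[x,y]=1$; passing to a suitable power, we may assume $|y|=q$. Hence $G_q$ acts fixed-point-freely by conjugation on $G_p$, so $G_pG_q = G_p \rtimes G_q$ is a Frobenius group with complement $G_q$. By the standard structure of Frobenius complements (Burnside), a Sylow $q$-subgroup of a Frobenius complement is cyclic, or $q=2$ and it is generalized quaternion. So $G_q$ is cyclic, or $q=2$ and $G_q \cong Q_{2^n}$ with $n\ge 3$.

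Second, we use the USR hypothesis. In the cyclic case $G_q$ is abelian, so by the analogue of \cite[Lemma~3.4]{BKMdR24} noted just before the statement, $\exp(G_q)$ divides $q$ or $4$. Hence $G_q$ is cyclic of order dividing $q$ or $4$, which is (i). In the generalized quaternion case, $Q_{2^n}$ contains a cyclic subgroup $\langle g\rangle$ of order $2^{n-1}$, and we need $n\le 4$.

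The main obstacle is bounding $|g|$ here, since the abelian-Sylow lemma does not apply directly to the nonabelian $G_2$. I would argue via \Cref{equivalentUSR}(ii): $|B_G(g)|\ge \varphi(2^{n-1})/2 = 2^{n-3}$, and $B_G(g)$ embeds in $\Aut(\langle g\rangle)$, which is a $2$-group. Take $x\in N_G(\langle g\rangle)$ inducing a nontrivial $2$-power automorphism. Replacing $x$ by its $2$-part and conjugating, we may assume $x\in N_{G_2}(\langle g\rangle)$, using a Sylow $2$-subgroup of $N_G(\langle g\rangle)$ containing $g$. Inside $Q_{2^n}$ with $n\ge 4$, $\langle g\rangle$ is characteristic, and $Q_{2^n}$ induces only inversion on it, so $B_{G_2}(g)$ has order $2$. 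On the other hand, the Sylow $2$-subgroup of $B_G(g)$ must have order at least $2^{n-3}$, and it is the image of $N_{G_2'}(\langle g\rangle)$ for some Sylow $2$-subgroup $G_2'\supseteq\langle g\rangle$, which is again generalized quaternion. This forces $2^{n-3}\le 2$, that is, $n\le 4$, so $G_2\cong Q_8$ or $Q_{16}$, giving (ii).

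A cleaner alternative for this last step is to take an element $g$ of order $2^{n-1}\ge 8$. In a generalized quaternion $2$-group the only automorphism of $\langle g\rangle$ that can be induced is inversion. Semi-rationality needs $B_G(g)$ of index at most $2$ in $\Aut(\langle g\rangle)\cong C_2\times C_{2^{n-3}}$, so $n-1\le 3$. I would present this version, first checking that the $2$-part of $B_G(g)$ is realized inside a Sylow $2$-subgroup, as argued above.
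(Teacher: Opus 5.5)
Your proof is correct and follows the route the paper intends. The paper gives no separate argument; it only says the result follows along the lines of \cite[Lemma~3.5]{BKMdR24} combined with the exponent bound for abelian Sylow subgroups, and that amounts to your three steps: the fixed-point-free action of $G_q$ on $G_p$ forces $G_q$ to be cyclic or generalized quaternion, the exponent bound settles the cyclic case, and semi-rationality of an element $g$ of order $2^{n-1}$ settles the quaternion case.

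In the quaternion case, your Sylow argument is sound. A Sylow $2$-subgroup of $N_G(\langle g\rangle)$ containing $g$ surjects onto the $2$-group $B_G(g)$. It also lies in a copy of $Q_{2^n}$, which for $n\ge 4$ induces only inversion on $\langle g\rangle$. This gives $|B_G(g)|\le 2$ against the requirement $|B_G(g)|\ge 2^{n-3}$, which is exactly what admits $Q_{16}$ and excludes all larger generalized quaternion groups.
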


The following corollary immediately follows from above proposition and \cite[Lemma~4.3]{BKMdR24}. 
	
	\begin{corollary}\label{G2G3Exclusions}
		Let $G$ be a finite USR group. 
		\begin{enumerate}
			\item[(i)]\label{Not4pG2Cyclic} If $G_2$ is cyclic and $p$ is an odd prime, then $G$ has no elements of order $4p$.
			\item[(ii)]\label{No21} If  $G_3$ is cyclic, then $G$ has no elements of order $3\cdot 7$ and $3\cdot 13$.
		\end{enumerate}  
	\end{corollary}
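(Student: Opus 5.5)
The plan is to derive both parts by one mechanism. Proposition~\ref{equivalentUSR} forces $B_G(g)$ to be large for an element $g$ of mixed order $qr$. Hence some element normalizing $\langle g\rangle$ acts nontrivially on the $r$-part of $g$ while centralizing the $q$-part. Cyclicity of $G_q$, combined with the small exponent, then makes this impossible. I will use the fact recorded just before Proposition~\ref{CyclicSylow} that an abelian Sylow $p$-subgroup of a USR group has exponent dividing $p$ or $4$. So if $G_2$ is cyclic then $|G_2|\le 4$, and if $G_3$ is cyclic then $|G_3|\le 3$.

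For (i), suppose $g=xy$ has order $4p$, with $x$ of order $4$, $y$ of order $p$, and $xy=yx$. Since $|G_2|\le 4$ and $x$ has order $4$, we get $\langle x\rangle=G_2$, a full Sylow $2$-subgroup. By Proposition~\ref{equivalentUSR}, $|B_G(g)|\ge \varphi(4p)/2=p-1$. Write $\Aut(\langle g\rangle)\cong \Aut(\langle x\rangle)\times\Aut(\langle y\rangle)\cong C_2\times C_{p-1}$. A subgroup of index at most $2$ in it has projection onto the $C_{p-1}$ factor of index at most $2$. Because $p-1$ is even, this gives the following claim, which is the key step.
\begin{itemize}
\item[] There is $n\in N_G(\langle g\rangle)$ whose action on $\langle y\rangle$ has even order and which centralizes $x$.
\end{itemize}
To see the claim, first suppose the projection of $B_G(g)$ onto $C_2$ is trivial; then $B_G(g)$ is a subgroup of $C_{p-1}$ of index at most $2$. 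This case needs care when $p\equiv 3 \pmod 4$, and there I will argue via the fact that $B_G(g)$ itself has order $p-1$, which is even. In general, the $2$-power elements of $B_G(g)$ that act trivially on $x$ form a subgroup of index at most $2$ inside the Sylow $2$-subgroup of $B_G(g)$. I expect this to be enough, but the bookkeeping when the Sylow $2$-subgroup of $C_2\times C_{p-1}$ is just $C_2\times C_2$ is the main obstacle. If the counting does not settle it, I would use that $x$ is conjugate to neither $x^{-1}$ nor a power of $g$ outside $\{j\equiv 1 \pmod 4\}$. This holds when $G$ has a normal $2$-complement $N$ (Burnside), since $G/N\cong G_2$ is abelian and so conjugation fixes the image of $x$. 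It forces $B_G(g)=\{j\equiv 1 \pmod 4\}\cong C_{p-1}$, which acts trivially on $x$, and the claim follows.

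Given $n$, take its $2$-part $n_2$. It still centralizes $x$ and acts on $\langle y\rangle$ by an automorphism of order a nontrivial power of $2$. Now $\langle x,n_2\rangle$ is a $2$-group, hence contained in a Sylow $2$-subgroup of order at most $4$ that contains $\langle x\rangle$. So $\langle x,n_2\rangle=\langle x\rangle$ and $n_2\in\langle x\rangle\subseteq C_G(y)$, which is a contradiction.

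For (ii), the same scheme is simpler. Let $g=xy$ with $|x|=3$ and $|y|=r\in\{7,13\}$. Then $\Aut(\langle g\rangle)\cong C_2\times C_{r-1}$, and any subgroup of index at most $2$ contains an element whose projection to $C_{r-1}$ has order $3$. Such an element has order $3$ up to its $2$-part, so its $3$-part $n_3$ acts trivially on $\langle x\rangle$, because $\Aut(C_3)=C_2$. It also acts with order $3$ on $\langle y\rangle$. Then $\langle x,n_3\rangle$ is a $3$-group of order at most $|G_3|=3$, so $n_3\in\langle x\rangle$ centralizes $y$, a contradiction.
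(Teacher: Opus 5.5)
Your proof is correct, but only because of the fallback you give in (i). As you suspected, the counting route cannot produce the key claim when $p\equiv 3\pmod 4$. For $p=3$, $\Aut(\langle g\rangle)\cong(\mathbb{Z}/12\mathbb{Z})^\times=\{1,5,7,11\}$. The bound $|B_G(g)|\ge p-1$ alone allows $B_G(g)=\{1,7\}$ or $\{1,11\}$, and neither contains an automorphism fixing $x$ and moving $y$. The bad subgroups are the index-$2$ ones projecting onto $\Aut(\langle x\rangle)$, not the trivial-projection case you flagged; there $|B_G(g)|\ge p-1$ already forces $B_G(g)=1\times\Aut(\langle y\rangle)$.

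So the real content of (i) is that no element of $G$ inverts $x$, and this holds unconditionally. Since $G_2=\langle x\rangle\cong C_4$ is abelian, $|N_G(G_2):C_G(G_2)|$ is odd and divides $|\Aut(C_4)|=2$. Hence every element normalizing $\langle x\rangle$ centralizes $x$. This is exactly the hypothesis of Burnside's theorem, so you need not invoke the theorem separately. Because $N_G(\langle g\rangle)$ normalizes the characteristic subgroup $\langle x\rangle$, you get $B_G(g)\le\{j\equiv 1\pmod 4\}$. The lower bound then forces equality, so your element $n$ exists. Present this as the argument and drop the counting. The final Sylow step of (i) and all of (ii) are fine as written.

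As for the comparison, the paper gives no argument of its own here. It deduces the corollary immediately from \Cref{CyclicSylow}, together with the exponent bound for abelian Sylow subgroups stated just before it, and \cite[Lemma~4.3]{BKMdR24}. Your route is self-contained, using only \Cref{equivalentUSR} and that exponent bound. It also shows that the uniform integer $m$ plays no role: only $[\Aut(\langle g\rangle):B_G(g)]\le 2$ is used, both here and in the exponent bound. So your argument proves the statement for every group in which all elements are semi-rational. The paper's citation is shorter but leaves adapting the cited lemma to the USR setting to the reader.
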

	
In order to conclude about the prime graphs of solvable USR groups, we shall make use of various results which we list now. To begin with, we have Lucido's three prime lemma which states that if $X$ is a set containing $3$ vertices of the prime graph $\bm{\Gamma}$ of a finite solvable group, then at least two elements of $X$ are joined by an edge in $\bm{\Gamma}$ \cite[Proposition~1]{Luc99}.

For solvable groups, there is a crucial criterion on connected components of its prime graph, due to Gruenberg and Kegel \cite[Theorem A]{Wil81}. To understand the criterion, we need the notion of $2$-Frobenius groups.

 Recall that the Fitting subgroup of $G$, denoted by $F(G)$, is the unique largest nilpotent normal subgroup of $G$. Equivalently, it is the direct product of the largest normal $p$-subgroups $\O_p(G)$ of $G$.

The Fitting series 
\[1 = F_0(G) \leqslant F_1(G) \leqslant F_2(G) \leqslant ... \] of $G$ 
is defined by 
$$F_0(G) = 1 \qand \frac{F_{j}(G)}{F_{j-1}(G)}=F\left(\frac{G}{F_{j-1}(G)}\right) \text{ for } j\geqslant 1.$$ 
Observe that if $G$ is a non-trivial solvable group, then $F(G)\ne 1$. The least integer $n$ for which the Fitting series reaches $G$ is called the Fitting length of $G$, and is denoted by $\ell_F(G)$.
A finite group $G$ is said to be $2$-Frobenius if there exist normal subgroups $N \leqslant K$ of $G$, such that $K$ is a Frobenius group with kernel $N$, and $G/N$ is a Frobenius group with kernel $K/N$. Equivalently, $G$ is $2$-Frobenius if and only if $F_2(G)$ and $G/F_1(G)$ are Frobenius groups. In this case, $F_1(G)$ is called the lower kernel, while $G/F_2(G)$ is called the upper complement of $G$. 
Moreover, $F_2(G)/F_1(G)$ is called the upper kernel as well as the lower complement.	

\begin{proposition}[{\cite[Theorem~2.4]{BKMdR24}}]\label{ConnectedComponents} Let $G$ be a finite solvable group, then $\Gamma(G)$ has at most $2$ connected components, and has exactly $2$ components, if and only if $G$ is a Frobenius group or a  $2$-Frobenius group.
\end{proposition}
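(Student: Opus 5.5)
This proposition is cited from \cite[Theorem~2.4]{BKMdR24}, ultimately going back to Gruenberg--Kegel \cite[Theorem A]{Wil81}. I would prove it in two parts: first that $\Gamma(G)$ has at most two components, then the characterization of the disconnected case.

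\textbf{At most two components.} Suppose $\Gamma(G)$ had three components. Choosing one prime from each gives three pairwise non-adjacent vertices, which contradicts Lucido's three prime lemma \cite[Proposition~1]{Luc99}. So there are at most two components.

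\textbf{Frobenius and $2$-Frobenius groups are disconnected.} If $G=N\rtimes K$ is Frobenius, I would show that $\pi(N)$ and $\pi(K)$ give a disconnected prime graph. These sets are disjoint because $|N|$ and $|K|$ are coprime. No element has order $pq$ with $p\in\pi(N)$ and $q\in\pi(K)$, because a nontrivial element of a complement centralizes no nontrivial element of $N$. Every element lies in $N$ or in a conjugate of $K$, so every edge stays inside $\pi(N)$ or inside $\pi(K)$.

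For a $2$-Frobenius group with $N=F_1(G)$ and $K=F_2(G)$, I would take the two sets $\pi(K/N)$ and $\pi(N)\cup\pi(G/K)$. Since $K$ is Frobenius with kernel $N$ and $G/N$ is Frobenius with kernel $K/N$, these sets are disjoint. Let $x$ have order $pq$ with $p\in\pi(K/N)$ and $q$ in the other set, and write $x=x_px_q$.
\begin{itemize}
\item If $q\in\pi(N)$, then $x_q\in N$. Also $x_p\in K\setminus N$, so $x_p$ lies in a Frobenius complement of $K$ and cannot centralize $x_q\neq 1$.
\item If $q\in\pi(G/K)$, pass to $G/N$. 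There $x_pN$ is a nontrivial element of the kernel, and $x_qN$ is a nontrivial element outside the kernel that commutes with it. This contradicts the Frobenius property of $G/N$.
\end{itemize}

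\textbf{Disconnected implies Frobenius or $2$-Frobenius.} This direction is the main obstacle. Suppose $\Gamma(G)$ has components $\pi_1$ and $\pi_2$. I would proceed in four steps.
\begin{enumerate}
\item Let $F=F(G)$. If both $\pi_1$ and $\pi_2$ met $\pi(F)$, then commuting elements from different Sylow subgroups of the nilpotent group $F$ would give an edge. So we may assume $\pi(F)\subseteq\pi_1$.
\item Take a Hall $\pi_2$-subgroup $H$. Because $\Gamma(G)$ has no $\pi_1$--$\pi_2$ edges, every nontrivial element of $H$ acts fixed-point-freely on $F$. Hence $FH$ is Frobenius with kernel $F$, and $H$ is a Frobenius complement, so it has cyclic or generalized quaternion Sylow subgroups.
\item Now use the structure of $G/F$ together with $C_G(F)\le F$. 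Either $G=FH$, and then $G$ is Frobenius. Otherwise $\pi_1$-elements outside $F$ occur. Let $F_2/F=F(G/F)$. I would show $H$ maps into $F(G/F)$ and that $G/F$ is Frobenius with kernel $F_2/F$ whose image is $\cong H$ and whose complement is a $\pi_1$-group. The point is that a $\pi_1$-element outside $F_2$ cannot centralize any element of the kernel.
\item The delicate part is showing $F_2/F$ is exactly a $\pi_2$-group and that no further Fitting layers occur. For this I would use that Frobenius complements have cyclic Sylow subgroups of odd order and that their fixed-point-free action forces the upper complement to be cyclic or quaternion. I would follow Williams' argument \cite{Wil81} to get $\ell_F(G)\le 3$.
\end{enumerate}
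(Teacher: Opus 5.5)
\textbf{Overall.} The paper gives no argument for this statement; it quotes \cite[Theorem~2.4]{BKMdR24}, i.e.\ the Gruenberg--Kegel theorem as recorded in \cite{Wil81}. So you are attempting more than the paper does, but the hard direction of your proposal has a genuine gap, and one easy case needs a small repair.

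\textbf{The easy directions.} Your first part is fine: three components would give three pairwise non-adjacent primes, contradicting Lucido's lemma. Your Frobenius case is also fine. In the $2$-Frobenius case, however, the case split does not work as used. The sets $\pi(N)$ and $\pi(G/K)$ may overlap; for $S_4=C_2^2\rtimes S_3$ both equal $\{2\}$. So ``$q\in\pi(N)$ implies $x_q\in N$'' is false, and in your second bullet $x_qN$ may be trivial. Split instead on whether $x_q\in N$. If $x_q\notin N$, then $x_q\notin K$ because $q\nmid|K/N|$, and your second argument applies verbatim.

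\textbf{The gap in the converse.} Step~3 only announces that the image $\overline{H}$ of $H$ lies in $F(G/F)$. This is equivalent to $FH\unlhd G$ and is essentially the whole theorem. Step~4 defers exactly this to Williams, and it aims at $\ell_F(G)\le 3$, which would not give the $2$-Frobenius structure anyway. Even the ``point'' you offer is not immediate: $yF$ commuting with $hF$ in $G/F$ does not by itself produce an element of order $pq$ in $G$. Three ingredients are missing.

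(a) A lifting step. Suppose a nontrivial $\pi_1$-element $yF$ commutes with a nontrivial $\pi_2$-element $hF$. Then $y$ normalizes the Frobenius group $F\langle h\rangle$. The Frattini argument in $\langle y\rangle F\langle h\rangle$ (complements of $F$ are conjugate) gives a nontrivial $\pi_1$-element $z$ normalizing $\langle h\rangle$ with $zF$ still commuting with $hF$. Hence $[z,h]\in F\cap\langle h\rangle=1$, so $G/F$ again has no commuting nontrivial $\pi_1$/$\pi_2$ pairs.

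(b) $\O_{\pi_1}(G/F)=1$. Its preimage is a normal $\pi_1$-subgroup on which $H$ acts fixed-point-freely. It is therefore a Frobenius kernel, nilpotent by Thompson's theorem, and so contained in $F$. Together with (a), this makes $F(G/F)$ a normal $\pi_2$-subgroup of $\overline{H}$.

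(c) $F(G/F)=\overline{H}$. Otherwise, applying (a) and (b) one level up with the roles of $\pi_1,\pi_2$ swapped produces a $2$-Frobenius section. Its lower kernel is $F(G/F)\le\overline{H}\cong H$, and its upper kernel is a nontrivial $\pi_1$-group.
\begin{itemize}
\item The upper kernel is a nilpotent Frobenius complement admitting a fixed-point-free automorphism, so it has odd order: a cyclic or generalized quaternion $2$-group has a unique involution, fixed by every automorphism.
\item By the same involution argument, it cannot act fixed-point-freely on a nontrivial Sylow $2$-subgroup of $F(G/F)$. Hence $F(G/F)$ has odd order, and it is cyclic, being nilpotent with cyclic Sylow subgroups inside a Frobenius complement.
\item Then $\Aut(F(G/F))$ is abelian. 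Since the upper kernel is its own commutator with the upper complement modulo $F(G/F)$, it acts trivially on the cyclic lower kernel. This contradicts fixed-point-freeness.
\end{itemize}
This interplay between the cyclic or quaternion Sylow $2$-subgroup of $H$ and the odd-order upper kernel is the mechanism your Step~4 does not identify.

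Once (a)--(c) are in place, $FH$ is Frobenius with kernel $F$, and $G/F$ is Frobenius with kernel $\overline{H}$ and a $\pi_1$-complement. So $G$ is $2$-Frobenius whenever $G\ne FH$.
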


\begin{lemma}[{\cite[Lemma~2.1]{GKLNS15}}]\label{GKLNS}
	Let $G$ be a finite $2$-Frobenius group. Then $G/ F_2(G)$ is cyclic, $F_2(G) / F_1(G)$ is cyclic of odd order, and $F_1(G)$ is not a cyclic group.
\end{lemma}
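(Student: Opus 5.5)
Throughout, write $N=F_1(G)$ and $K=F_2(G)$. By the definition (in its equivalent form) $K$ is a Frobenius group with kernel $N$, and $G/N$ is a Frobenius group with kernel $K/N$; fix a complement $H/N$ of $K/N$ in $G/N$, so $H/N\cong G/K$. The plan is to prove the three assertions in the order: $K/N$ has odd order and is cyclic, then $G/K$ is cyclic, then $N$ is not cyclic. Besides the definitions I use only standard facts on Frobenius groups: a Frobenius kernel is nilpotent; a Frobenius complement has cyclic Sylow $p$-subgroups for odd $p$ and cyclic or generalized quaternion Sylow $2$-subgroups; every subgroup of order $pq$ of a complement is cyclic; a complement acts fixed-point-freely on the kernel, and hence on every normal subgroup of the Frobenius group contained in the kernel; and the kernel equals the commutator of itself with a complement.

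\emph{Step 1: $K/N$ is cyclic of odd order.} The group $K/N$ is a Frobenius complement of $K$, so its Sylow subgroups are cyclic or generalized quaternion. It is also the Frobenius kernel of $G/N$, so it is nilpotent and is the direct product of its Sylow subgroups. Suppose $|K/N|$ is even, and let $Q$ be its Sylow $2$-subgroup. Then $Q$ is characteristic in $K/N$, hence normal in $G/N$ and contained in the kernel $K/N$. Whether $Q$ is cyclic or generalized quaternion, it has a unique involution $z$, and $z$ is fixed by every automorphism of $Q$. So $H/N$ centralizes $z\neq 1$, contradicting the fixed-point-free action of $H/N$ on $K/N$. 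Hence $|K/N|$ is odd. A nilpotent group whose Sylow subgroups are all cyclic is cyclic, so $K/N$ is cyclic.

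\emph{Step 2: $G/K$ is cyclic.} Since $H/N$ acts fixed-point-freely, hence faithfully, on the cyclic group $K/N$, it embeds in the abelian group $\Aut(K/N)$. Thus $H/N$ is an abelian Frobenius complement. Every Sylow subgroup of $H/N$ is cyclic: for odd primes by the standard fact, and for $p=2$ because an abelian generalized quaternion group is cyclic. An abelian group with all Sylow subgroups cyclic is cyclic, so $G/K\cong H/N$ is cyclic.

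\emph{Step 3: $N$ is not cyclic.} This is the step I expect to need the most care. Suppose $N$ is cyclic. Then $\Aut(N)$ is abelian, so $G/C_G(N)$ is abelian, and hence $G'\le C_G(N)$. Since $K/N$ is a Frobenius kernel with complement $H/N$, we have $K/N=[K/N,H/N]$. Therefore $K=N[K,H]\le N G'\le N\,C_G(N)$, so every element of $K$ is of the form $nc$ with $n\in N$ and $c\in C_G(N)$. Pick $k\in K\setminus N$ and write $k=nc$; then $c\in K\setminus N$ centralizes $N$. This contradicts the fact that in the Frobenius group $K$ with kernel $N$, no element outside $N$ centralizes a nontrivial element of $N$; note $N\neq 1$ since $G$ is solvable and nontrivial. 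Hence $N$ is not cyclic.

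The delicate points are checking that the fixed-point-free action passes to the characteristic subgroup $Q$ and to the relevant quotients, and the identity $K/N=[K/N,H/N]$. The latter follows from the fact that, for a coprime fixed-point-free action, the map $x\mapsto[x,h]$ is a bijection on the kernel.
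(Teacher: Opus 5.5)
Your proof is correct and complete. There is nothing in the paper to compare it with: the paper does not prove this lemma but imports it from \cite[Lemma~2.1]{GKLNS15}, so your three steps serve as a self-contained justification.

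Steps 1 and 2 are the natural arguments:
\begin{itemize}
\item In Step 1, $F_2(G)/F_1(G)$ is at once a Frobenius kernel and a Frobenius complement. Its unique involution would be a fixed point of the upper complement, so its order is odd, and being nilpotent with cyclic Sylow subgroups it is cyclic.
\item In Step 2, the upper complement acts faithfully on a cyclic group, so it is abelian, and an abelian Frobenius complement is cyclic.
\end{itemize}

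For Step 3 there is a slightly quicker route that uses solvability. If $F(G)$ were cyclic, then $G/C_G(F(G))$ would be abelian. Since $C_G(F(G))\le F(G)$ in a solvable group, $G/F(G)$ would then be abelian, which is impossible for the Frobenius group $G/F_1(G)$. Your version avoids the fact $C_G(F(G))\le F(G)$. It works inside the Frobenius group $F_2(G)$ and uses the identity $K/N=[K/N,H/N]$, and it is equally valid.

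Three cosmetic remarks:
\begin{itemize}
\item $N\neq 1$ holds simply because a Frobenius kernel is nontrivial by definition; no appeal to solvability is needed.
\item The map $x\mapsto[x,h]$ is a bijection on the kernel as soon as $h$ acts fixed-point-freely; coprimality plays no role.
\item You silently identify the kernels with $F_1(G)$ and $F_2(G)/F_1(G)$. This holds because the Fitting subgroup of a Frobenius group is its kernel, and the paper uses the same identification.
\end{itemize}
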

We thus obtain constraint on the number of primes dividing Frobenius and $2$-Frobenius USR groups.

\begin{theorem}\label{Frobenius3Vertices} 
The order of a finite USR group which is a solvable Frobenius group or a $2$-Frobenius group, is divisible by at most $3$ primes.
\end{theorem}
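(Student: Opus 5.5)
Here is the approach. By \Cref{ConnectedComponents}, $\Gamma(G)$ has exactly two connected components. By Lucido's three prime lemma, each component is a clique: if $p,q$ in one component were non-adjacent, then any prime $r$ in the other component would make $\{p,q,r\}$ an independent set. Any three vertices contain two adjacent ones, so at least one component is a single vertex. Suppose, for contradiction, that $|\pi(G)|\ge 4$. Then $\Gamma(G)$ is the disjoint union of an isolated vertex $\{r\}$ and a complete graph on at least $3$ primes. Checking against \Cref{PrimeSpectrum}, $\pi(G)$ contains $\{2,3\}$ together with at least two of $5,7,13$.

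We use \Cref{Edges} to decide which prime is isolated.
\begin{itemize}
\item If $13\in\pi(G)$, then $2-3$ is an edge by \Cref{Edges}(i). So $2$ and $3$ lie in the big clique, and $r\in\{5,7,13\}$.
\item If $13\notin\pi(G)$, then $\pi(G)=\{2,3,5,7\}$. If $r=2$ or $r=3$, the clique contains $5-7$, which by \Cref{Edges}(ii) forces $2-7$ and $3-5$, a contradiction. Hence again $r\in\{5,7,13\}$, and $2-3\in\Gamma(G)$.
\end{itemize}
So in every case the isolated prime $r$ is odd and at least $5$, and the big component contains $2,3$ and at least one further prime $s\in\{5,7,13\}$.

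Next, apply the structure theory. Suppose first that $G=N\rtimes K$ is Frobenius. The kernel $N$ is nilpotent, and the complement $K$ has all Sylow subgroups cyclic or generalized quaternion, with $\pi(N)$ and $\pi(K)$ forming the two components. For the big component to be the complement, $K$ would need elements of order $6$ and of order $2s$ or $3s$; that is, $|\pi(K)|\ge3$ with $K$ a solvable Frobenius complement. In that case $G_r=N$ is a normal Sylow subgroup, and \Cref{CyclicSylow} forces each Sylow subgroup of $K$ to be cyclic of order dividing $4$ or $q$, or $Q_8,Q_{16}$. Then $B_G(g)$ for $g$ of order $s$ must contain an element of order $(s-1)/2\in\{2,3,6\}$. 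Moreover, semi-rationality of elements of $N$ of order $r$ needs $B_G$ of order divisible by $(r-1)/2$, and elements of $K$ act fixed-point-freely, so $K$ embeds into the normalizer structure. I would then show that $K$, being a Frobenius complement of order $2^a3^b s$ with cyclic odd Sylows, has a normal cyclic $s$-subgroup whose automorphisms cannot supply the required $(s-1)/2$ while $s$ is joined to both $2$ and $3$. When $s=7$, elements of order $21$ are excluded by \Cref{G2G3Exclusions}(ii) since $G_3$ is cyclic; $13$ is handled similarly; and for $s=5$ the required edge $3-5$ conflicts with \Cref{Edges}. If instead the kernel is the big component, then $N$ is nilpotent with $\pi(N)\supseteq\{2,3,s\}$, and $K$ is an $r$-group. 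But $r\ge5$ odd with $K$ cyclic of order $r$ gives $B_G(x)$ of order at most $\varphi(r)/2$ only if $G$ contains elements normalizing $\langle x\rangle$ of order $(r-1)/2$. Those elements would lie in $N$ and commute with elements of order $r$, which is impossible in a Frobenius group.

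For the $2$-Frobenius case, use \Cref{GKLNS}: $G/F_2(G)$ is cyclic and $F_2(G)/F_1(G)$ is cyclic of odd order. The isolated prime is exactly the prime of the upper kernel $F_2/F_1$, with $F_1\rtimes G/F_2$ forming the other component. Again, semi-rationality of an element $x$ of order $r$ requires $B_G(x)$ to contain an element of order $(r-1)/2$ from $N_G(\langle x\rangle)$. Its order is a product of primes from the big component, so it is adjacent to $r$ unless it acts nontrivially, and I would derive a contradiction from the centralizer and normalizer structure.

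The main obstacle I expect is making this final step rigorous in the Frobenius-complement case, where $K$ itself may realize three primes. I would first attack it through \Cref{G2G3Exclusions} together with the Zassenhaus structure of solvable Frobenius complements.
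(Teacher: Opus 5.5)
There are genuine gaps, and as written the proposal does not prove the statement.

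\textbf{The Frobenius case.} A minor point first. Lucido's lemma makes each component a clique, but it does not force one component to be a single vertex: two disjoint cliques of size at least $2$ satisfy the three-prime condition, e.g.\ $(C_7\times C_{13})\rtimes_\Frob C_6$. You can repair this with \Cref{Edges}. Without the edge $2-3$ one would have $13\notin\pi(G)$, and at most the edge $2-5$ on $\{2,3,5,7\}$, giving three components. Moreover, any edge inside $\{5,7,13\}$ forces an edge to $2$ by \Cref{Edges}(ii)--(iv).

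The real problem is the subcase where the big clique lies in the complement $K$. The cases $s=7,13$ are fine via \Cref{G2G3Exclusions}(ii), since odd Sylow subgroups of $K$ are cyclic. So is the subcase where the kernel carries the big clique: there $N_G(\langle x\rangle)=K=C_G(x)$, so $B_G(x)=1$. But for $s=5$ there is no conflict with \Cref{Edges} at all, because \Cref{Edges}(i) only says that $3-5$ forces $2-3$, which holds. So the configuration with $\{2,3,5\}\subseteq\pi(K)$ pairwise adjacent and $N$ a $7$- or $13$-group is left open, and ``I would then show\dots'' is not an argument. Ruling it out needs a genuine analysis of solvable Frobenius complements with $G_2\in\{C_2,C_4,Q_8,Q_{16}\}$ (\Cref{CyclicSylow}) against the semi-rationality constraints. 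The paper sidesteps all of this by quoting the classification of Frobenius USR groups, \cite[Main theorem]{PV26}.

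\textbf{The $2$-Frobenius case.} This case contains no proof. The mechanism you point to cannot work on its own, because semi-rationality of elements of the isolated prime is satisfiable. For instance, in the $2$-Frobenius USR group $C_2^{12}\rtimes(C_{13}\rtimes_\Frob C_6)$, elements of order $13$ get $|B_G|=6$ from the upper complement. The contradiction has to come from the fourth prime.

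The missing idea is to work in $G/F(G)$ and $\mathcal{Z}(F(G))$:
\begin{itemize}
\item By \Cref{GKLNS}, $G/F(G)$ is a Frobenius USR group with cyclic odd-order kernel and cyclic complement. By \cite[Main theorem]{PV26} it is one of $C_3\rtimes C_2$, $C_5\rtimes C_4$, $C_5\rtimes C_2$, $C_7\rtimes C_3$, $C_7\rtimes C_6$, $C_{13}\rtimes C_6$.
\item These involve at most three primes, so some prime of $\pi(G)$ divides only $|F(G)|$.
\item For $x\in\mathcal{Z}(F(G))$ we have $F(G)\le C_G(x)$, so $B_G(x)$ is a section of $G/F(G)$. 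This is too small for a suitable $x$, of order $35$, $7$, $5$ or $39$ according to the case.
\end{itemize}
The few configurations not settled this way are handled in the paper by a minimal counterexample, passing to $G/\O_2(G)$ or $G/\O_3(G)$.
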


\begin{proof} If $G$ is a solvable Frobenius USR group, then the result follows directly from \cite[Main theorem]{PV26}. Suppose that $G$ is a $2$-Frobenius USR group with $|\pi(G)|>3$. Since every $2$-Frobenius group is solvable, \Cref{PrimeSpectrum} implies that $\pi(G)=\{2,3,5,7\}$, $\{2,3,5,13\}$, $\{2,3,7,13\}$ or $\{2,3,5,7,13\}.$ By \Cref{GKLNS}, $G/F(G)$ is a Frobenius group with cyclic kernel of odd order and cyclic complement. Using again \cite[Main theorem]{PV26}, we have that $G/F(G)$ is isomorphic to one of the following Frobenius groups: 
	\[C_3 \rtimes C_2,\quad C_5 \rtimes C_4,\quad C_5 \rtimes C_2, \quad C_7 \rtimes C_3,\quad C_7\rtimes C_6 \quad \text{or} \quad C_{13} \rtimes C_6. \]
	\begin{description}
		\item[Case I] $\underline{ \pi(G) = \{2, 3, 5, 7\}}$

\noindent Here, $G/F(G)$ is isomorphic to one of the following: \[C_3 \rtimes C_2,\quad C_5 \rtimes C_4,\quad C_5 \rtimes C_2, \quad C_7 \rtimes C_3 \quad  \text{or} \quad C_7\rtimes C_6. \] 
If $G/F(G) \simeq C_3 \rtimes C_2$, then $\mathcal{Z}(F(G))$ contains an element $x$ of order $35$.  Now, \linebreak $B_G(x) = N_G(\langle x \rangle) / C_G(x)$ has to be isomorphic to a subgroup of quotient of $G/F(G)$. Hence $[\operatorname{Aut}(\langle x\rangle) : B_G(x)]>2$, a contradiction.
If $G/F(G) \simeq C_5 \rtimes C_4$ or $C_5 \rtimes C_2$, then center of $F(G)$ contains an element of order $7$, say $x$. Now, $B_G(x) = \N_G(\langle x \rangle) / C_G(x)$ has to be isomorphic to a subgroup of quotient of $G/F(G)$. Since $|\Aut(\langle x\rangle)|=6$, we get that $[\Aut(\langle x \rangle) : B_G(x)] > 2$, again a contradiction.
Similarly, if $G/F(G) \simeq C_7 \rtimes C_3$, then there is an element $x$ of order $5$ in the center of $F(G)$, yielding contradiction. Now, if $G/F(G) \simeq  C_7 \rtimes C_6$, then by Lucido's three prime lemma and \Cref{Edges}, it follows that the prime graph of $G$ contains a triangle in which either the vertex $5$ or the vertex $7$ is isolated. If the vertex $5$ is isolated, then $G$ contains an element of order $21$, but $G/F(G)$ does not contains an element of order $21$. Therefore, we may assume that the vertex $7$ is isolated and choose $G$ to be of minimal order which realizes this graph. Then there exists an element $x \in G$ of order $15$. Consequently, $4 \mid |B_G(x)|$, which implies $4 \mid |G|$ and hence $2 \mid |F(G)|$. It follows that $G/O_2(G)$ is a USR group whose prime graph is same as that of $G$, yielding a contradiction.

\item[Case II] $\underline{ \pi(G) = \{2, 3, 5, 13\}}$ 

\noindent In this case, $G/F(G) $ is isomorphic to one of the following: 
	\[C_3 \rtimes C_2,\quad C_5 \rtimes C_4,\quad C_5 \rtimes C_2 \quad \text{or} \quad C_{13} \rtimes C_6. \]
Proceeding as in Case I, one may check that $G/F(G)$ is not isomorphic to $C_3 \rtimes C_2$, $C_5 \rtimes C_2$ and $C_5 \rtimes C_4$. Also, if $G/F(G) \simeq C_{13} \rtimes C_6$, then by Lucido's three prime lemma and \Cref{Edges}, it follows that $\Gamma(G)$ contains a triangle in which either the vertex $5$ or the vertex $13$ is isolated.  If the vertex $5$ is isolated, then as $5$ divides $|F(G)|$ and $G$ does not have an element of order $10$. So, we get that $G_2 \simeq C_2$, which is not possible as $G$ contains an element of order $13 \cdot 3$. Therefore, we assume that the vertex $13$ is isolated and choose $G$ to be a group of minimal order which realizes this graph. Then, there exists an element $x \in G$ of order $15$. Consequently, $4 \mid |B_G(x)|$ and $2 \mid |F(G)|$. It follows that $G/O_2(G)$ is a USR group whose prime graph is same as that of $G$, yielding a contradiction. 

\item[Case III] $\underline{ \pi(G) = \{2, 3, 7, 13\}}$

\noindent In this case, $G/F(G) $ is isomorphic to one of the following: 
\[C_3 \rtimes C_2,\quad C_7 \rtimes C_3,\quad C_7 \rtimes C_6 \quad \text{or} \quad C_{13} \rtimes C_6. \] If $G/F(G) \simeq C_3 \rtimes C_2,$ or  $C_7 \rtimes C_3$, then arguing as in Case I, we get a contradiction. Now, if $G/F(G) \simeq  C_7 \rtimes C_6$, then as observed in Case II, $\Gamma(G)$ contains a triangle in which either the vertex $7$ or the vertex $13$ is isolated. Now, if the vertex $7$ is isolated, then using \Cref{G2G3Exclusions}, the center of $F(G)$ contains an element $x$ of order $3 \cdot 13$. Therefore, $12 \mid |B_G(x)|$, which is not possible as $B_G(x)$ has to be isomorphic to a subgroup of a quotient of $G/F(G)$.

Therefore, we may assume that the vertex $13$ is isolated and choose $G$ to be a group of minimal order which realizes this graph. By \Cref{G2G3Exclusions}, $3 \mid |F(G)|$. Hence, $G/O_3(G)$ is a USR group realizing the same prime graph as $G$, a contradiction. Also, if $G/F(G) \simeq C_{13} \rtimes C_6$, then it is not possible as in the former case.
\item[Case IV] $\underline{ \pi(G) = \{2, 3, 5, 7, 13\}}$

\noindent In this case, $G/F(G) $ is isomorphic to one of the following: 
	\[C_3 \rtimes C_2,\quad C_5 \rtimes C_4,\quad C_5 \rtimes C_2, \quad C_7 \rtimes C_3,\quad C_7\rtimes C_6 \quad \text{or} \quad C_{13} \rtimes C_6. \] 
Note that each of these possibilities imply presence of an element of non-admissible order in the center of $F(G)$.
	\end{description}
	
	\end{proof}
\begin{corollary}
	The prime graph of a finite solvable USR group with more than $3$ vertices is connected.
\end{corollary}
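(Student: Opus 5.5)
The corollary follows by combining \Cref{ConnectedComponents} with \Cref{Frobenius3Vertices}, arguing by contraposition.

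Let $G$ be a finite solvable USR group whose prime graph $\Gamma(G)$ has more than $3$ vertices, so $|\pi(G)|\geq 4$. Suppose, for a contradiction, that $\Gamma(G)$ is disconnected.

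First, by \Cref{ConnectedComponents}, the prime graph of a finite solvable group has at most two connected components. It has exactly two precisely when the group is Frobenius or $2$-Frobenius. So disconnectedness of $\Gamma(G)$ forces $G$ to be a solvable Frobenius group or a $2$-Frobenius group.

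Second, \Cref{Frobenius3Vertices} applies to exactly such groups: a finite USR group that is a solvable Frobenius group or a $2$-Frobenius group satisfies $|\pi(G)|\leq 3$. This contradicts $|\pi(G)|\geq 4$, so $\Gamma(G)$ is connected.

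No step here is a real obstacle, since all the substantive work sits in the case analysis of \Cref{Frobenius3Vertices}. The one thing to check is that the hypotheses line up. A $2$-Frobenius group is automatically solvable, and a Frobenius group arising from \Cref{ConnectedComponents} is solvable because $G$ is. Hence both alternatives produced by \Cref{ConnectedComponents} are covered by \Cref{Frobenius3Vertices}.
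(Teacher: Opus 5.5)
Your proposal is correct and follows the paper's route: the paper states this corollary without a separate proof, as an immediate consequence of \Cref{Frobenius3Vertices}. It relies, as you do, on \Cref{ConnectedComponents} to force a solvable group with disconnected prime graph to be Frobenius or $2$-Frobenius.
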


\begin{figure}[htbp]
	\centering
	\scriptsize
	\setlength{\tabcolsep}{2pt}
	\renewcommand{\arraystretch}{1.15}
	
	\begin{adjustbox}{max width=\textwidth}
		\begin{tabular}{|c|c|}
			\hline
			$\pi(G)$&Prime graphs\\
			\hline

			$\{2\}$  &
		(a) \begin{subfigure}{.15\textwidth}
				\centering
				\begin{tikzpicture}
					\draw (0.5,1.5) node{$C_2$};
					\node[label=east:{$2$}] at (0.5,1) (3){};
					\foreach \p in {3}{
						\draw[fill=black] (\p) circle (0.075cm);
					}
				\end{tikzpicture}
			\end{subfigure} \\
			\hline
			$\{3\}$&
			(b) \begin{subfigure}{.15\textwidth}
				\centering
				\begin{tikzpicture}
					\draw (0.5,1.5) node{$C_3$};
					\node[label=east:{$3$}] at (0.5,1) (3){};
					\foreach \p in {3}{
						\draw[fill=black] (\p) circle (0.075cm);
					}
				\end{tikzpicture}
			\end{subfigure} \\
			\hline
			
			$\{2, 3\}$ & (c)
			\begin{subfigure}{.18\textwidth}
				\centering
				\begin{tikzpicture}
					\draw (0.5,1.5) node{$S_3=C_3\rtimes_\Frob C_2$};
					\node[label=west:{$2$}] at (0,1) (2){};
					\node[label=east:{$3$}] at (0.5,1) (3){};
					\foreach \p in {2,3}{
						\draw[fill=black] (\p) circle (0.075cm);
					}
				\end{tikzpicture}
			\end{subfigure}
			(d) \hspace{-0.4cm} 
			\begin{subfigure}{.18\textwidth}
				\centering
				\begin{tikzpicture}
					\draw (0.5,1.5) node{$S_3 \times C_2$};
					\node[label=west:{$2$}] at (0,1) (2){};
					\node[label=east:{$3$}] at (0.5,1) (3){};
					\foreach \p in {2,3}{
						\draw[fill=black] (\p) circle (0.075cm);
					}
					\draw (2)--(3);
				\end{tikzpicture}
			\end{subfigure} \\
			\hline
			$\{2, 5\}$ &
			(e)\hspace{-0.4cm} 
			\begin{subfigure}{.18\textwidth}
				\centering
				\begin{tikzpicture}
					\draw (0.5,1.5) node{$C_5^2 \rtimes_\Frob Q_8$};				
					\node[label=west:{$2$}] at (0.5,1) (2){};
					\node[label=west:{$5$}] at (0.5,0.5) (3){};
					\foreach \p in {2,3}{
						\draw[fill=black] (\p) circle (0.075cm);
					}
				\end{tikzpicture}
			\end{subfigure}
			\hspace{-0.4cm} (f)\hspace{-0.1cm} 
			\begin{subfigure}{.20\textwidth}
				\centering
				
				\begin{tikzpicture}
					\draw (0.5,1.5) node{$[C_5^2 \rtimes_\Frob Q_8] \times C_2$};								
					\node[label=west:{$2$}] at (0.5,1) (2){};
					\node[label=west:{$5$}] at (0.5,0.5) (5){};
					\foreach \p in {2,5}{
						\draw[fill=black] (\p) circle (0.075cm);
					}
					\draw (2)--(5);
				\end{tikzpicture}
			\end{subfigure}\\
			\hline
			$\{3, 7\}$ &
			(g) \hspace{-0.5cm} 
			\begin{subfigure}{.17\textwidth}
				\centering
				\begin{tikzpicture}
					\draw (0.5,1.5) node{$C_7 \rtimes_\Frob C_3$};
					\node[label=east:{$3$}] at (0.5,1) (3){};
					\node[label=east:{$7$}] at (0.5,0.5) (7){};
					\foreach \p in {3,7}{
						\draw[fill=black] (\p) circle (0.075cm);
					}
				\end{tikzpicture}
			\end{subfigure}\\
			\hline
			$\{2, 3, 5\}$ &
			(h) \hspace{-0.3cm} 			
			\begin{subfigure}{.2\textwidth}
				\centering
				
				\begin{tikzpicture}
					\draw (0.5,1) node{$C_5^4 \rtimes_\Frob (C_3 \times Q_8)$};				
					\node[label=west:{$2$}] at (0,0.5) (2){};
					\node[label=east:{$3$}] at (0.5,0.5) (3){};
					\node[label=west:{$5$}] at (0,0) (5){};
					\foreach \p in {2,3,5}{
						\draw[fill=black] (\p) circle (0.075cm);
					}
					\draw (2)--(3);
				\end{tikzpicture}
			\end{subfigure}
			\hspace{-0.3cm} (i) \hspace{-0.1cm}
			\begin{subfigure}{.22\textwidth}
				\centering
				
				\begin{tikzpicture}
					\draw (0.5,1) node{$[C_5^4 \rtimes_\Frob (C_3 \times Q_8)] \times C_2$};								
					\node[label=west:{$2$}] at (0,0.5) (2){};
					\node[label=east:{$3$}] at (0.5,0.5) (3){};
					\node[label=west:{$5$}] at (0,0) (5){};
					\foreach \p in {2,3,5}{
						\draw[fill=black] (\p) circle (0.075cm);
					}
					\draw (2)--(3);
					\draw (2)--(5);
				\end{tikzpicture}
			\end{subfigure}
			\hspace{0.3cm} (j) \hspace{-0.4cm}
			\begin{subfigure}{.22\textwidth}
				\centering
				\begin{tikzpicture}
					\draw (0.5,1) node{$\text{SG}[60,7] = C_{15} \rtimes C_4$};				
					\node[label=west:{$2$}] at (0,0.5) (2){};
					\node[label=east:{$3$}] at (0.5,0.5) (3){};
					\node[label=west:{$5$}] at (0,0) (5){};
					\foreach \p in {2,3,5}{
						\draw[fill=black] (\p) circle (0.075cm);
					}
					\draw (2)--(3);
					\draw (3)--(5);
				\end{tikzpicture}
			\end{subfigure}
			\hspace{-0.5cm} (k) \hspace{-0.2cm}
			\begin{subfigure}{.22\textwidth}
				\centering
				
				\begin{tikzpicture}
					\draw (0.5,1) node{$[C_{15} \rtimes C_4] \times S_3$};								
					\node[label=west:{$2$}] at (0,0.5) (2){};
					\node[label=east:{$3$}] at (0.5,0.5) (3){};
					\node[label=west:{$5$}] at (0,0) (5){};
					\foreach \p in {2,3,5}{
						\draw[fill=black] (\p) circle (0.075cm);
					}
					\draw (2)--(3);
					\draw (2)--(5);
					\draw (3)--(5);
				\end{tikzpicture}
			\end{subfigure}\\
			\hline
			$\{2, 3, 7\}$&
			(l) \hspace{-0.5cm}
			\begin{subfigure}{.2\textwidth}
				\centering
				\begin{tikzpicture}
					\draw (0.5,1) node{$C_{7} \rtimes_\Frob  C_6$};				
					\node[label=west:{$2$}] at (0,0.5) (2){};
					\node[label=east:{$3$}] at (0.5,0.5) (3){};
					\node[label=east:{$7$}] at (0.5,0) (7){};
					\foreach \p in {2,3,7}{
						\draw[fill=black] (\p) circle (0.075cm);
					}
					\draw (2)--(3);
				\end{tikzpicture}
			\end{subfigure}
			\hspace{-0.2cm} (m) \hspace{-0.5cm}
			\begin{subfigure}{.22\textwidth}
				\centering
				\begin{tikzpicture}
					\draw (0.5,1) node{$[C_{7} \rtimes_\Frob  C_6] \times C_2$};								
					\node[label=west:{$2$}] at (0,0.5) (2){};
					\node[label=east:{$3$}] at (0.5,0.5) (3){};
					\node[label=east:{$7$}] at (0.5,0) (7){};
					\foreach \p in {2,3,7}{
						\draw[fill=black] (\p) circle (0.075cm);
					}
					\draw (2)--(3);
					\draw (2)--(7);
				\end{tikzpicture}
			\end{subfigure}
			\hspace{-0.2cm} (n) \hspace{-0.5cm}
			\begin{subfigure}{.22\textwidth}
				\centering
				\begin{tikzpicture}
					\draw (0.5,1) node{$[C_{7} \rtimes_\Frob  C_6] \times C_3$};				
					\node[label=west:{$2$}] at (0,0.5) (2){};
					\node[label=east:{$3$}] at (0.5,0.5) (3){};
					\node[label=east:{$7$}] at (0.5,0) (7){};
					\foreach \p in {2,3,7}{
						\draw[fill=black] (\p) circle (0.075cm);
					}
					\draw (2)--(3);
					\draw (3)--(7);
				\end{tikzpicture}
			\end{subfigure}
			\hspace{-0.2cm} (o) \hspace{-0.3cm}
			\begin{subfigure}{.2\textwidth}
				\centering
				\begin{tikzpicture}
					\draw (0.5,1) node{$[C_{7} \rtimes_\Frob  C_3] \times S_3$};				
					\node[label=west:{$2$}] at (0,0.5) (2){};
					\node[label=east:{$3$}] at (0.5,0.5) (3){};
					\node[label=east:{$7$}] at (0.5,0) (7){};
					\foreach \p in {2,3,7}{
						\draw[fill=black] (\p) circle (0.075cm);
					}
					\draw (2)--(3);
					\draw (2)--(7);
					\draw (3)--(7);
				\end{tikzpicture}
			\end{subfigure}\\
			\hline
			$\{2, 3, 13\}$ &
			(p) \hspace{-0.5cm}

			\begin{subfigure}{.28\textwidth}
				\centering
				\begin{tikzpicture}
					\draw (0.5,1) node{$[C_{13} \rtimes_\Frob C_6]$};				
					\node[label=west:{$2$}] at (0,0.5) (2){};
					\node[label=east:{$3$}] at (0.5, 0.5) (3){};
					
					\node[label=east:{$13$}] at (0.75, 0) (13){};
					\foreach \p in {2,3,13}{
						\draw[fill=black] (\p) circle (0.075cm);
					}
					\draw (2)--(3);
					
				\end{tikzpicture}
			\end{subfigure}
			\hspace{-0.2cm} (q) \hspace{-0.3cm}
			\begin{subfigure}{.28\textwidth}
				\centering
				\begin{tikzpicture}
					\draw (0.5,1) node{$[C_{13} \rtimes_\Frob C_6]  \times C_2$};				
					\node[label=west:{$2$}] at (0,0.5) (2){};
					\node[label=east:{$3$}] at (0.5, 0.5) (3){};
					
					\node[label=east:{$13$}] at (0.75, 0) (13){};
					\foreach \p in {2,3,13}{
						\draw[fill=black] (\p) circle (0.075cm);
					}
					\draw (2)--(3);
					\draw (2)--(13);
					
				\end{tikzpicture}
			\end{subfigure} 
			\hspace{0.4cm} (r) \hspace{-0.4cm}
			\begin{subfigure}{.28\textwidth}
				\centering
				\begin{tikzpicture}
					\draw (0.5,1) node{$[C_{13} \rtimes_\Frob C_6] \times S_3$};					
					\node[label=west:{$2$}] at (0,0.5) (2){};
					\node[label=east:{$3$}] at (0.5, 0.5) (3){};
					\node[label=east:{$13$}] at (0.75, 0) (13){};
					\foreach \p in {2,3,13}{
						\draw[fill=black] (\p) circle (0.075cm);
					}
					\draw (2)--(3);
					\draw (2)--(13);
					\draw (3)--(13);
					
				\end{tikzpicture}
			\end{subfigure}\\ [0.3cm]
		
			& (x*) \hspace{-0.4cm}
			\begin{subfigure}{.28\textwidth}
				\centering
				\begin{tikzpicture}
					
					\node[label=west:{$2$}] at (0,0.5) (2){};
					\node[label=east:{$3$}] at (0.5, 0.5) (3){};
					\node[label=east:{$13$}] at (0.75, 0) (13){};
					\foreach \p in {2,3,13}{
						\draw[fill=black] (\p) circle (0.075cm);
					}
					\draw (2)--(3);
					\draw (3)--(13);
					
				\end{tikzpicture}
			\end{subfigure}\\
			\hline

			$\{2, 3, 5, 7\}$& (s) \hspace{-0.3cm}			
			\begin{subfigure}{.28\textwidth}
				\centering
				\begin{tikzpicture}
					\node[align=center, font=\scriptsize, text width=4cm] at (0.5,1) {$[C_5^2 \rtimes_\Frob Q_8] \times [C_{7} \rtimes_\Frob  C_3]$};				
					\node[label=west:{$2$}] at (0,0.5) (2){};
					\node[label=east:{$3$}] at (0.5, 0.5) (3){};
					\node[label=west:{$5$}] at (0, 0) (5){};
					\node[label=east:{$7$}] at (0.5, 0) (7){};
					\foreach \p in {2,3,5,7}{
						\draw[fill=black] (\p) circle (0.075cm);
					}
					\draw (2)--(3);
					\draw (2)--(7);;
					\draw (3)--(5);
					\draw (5)--(7);
				\end{tikzpicture}
			\end{subfigure}
			\hspace{-0.2cm} (t) \hspace{-0.3cm}
			\begin{subfigure}{.28\textwidth}
				\centering
				\begin{tikzpicture}
					\node[align=center, font=\scriptsize, text width=4cm] at (0.5,1){$[C_5^2 \rtimes_\Frob Q_8] \times [C_{7} \rtimes_\Frob  C_3] \times C_2$};				
					\node[label=west:{$2$}] at (0,0.5) (2){};
					\node[label=east:{$3$}] at (0.5, 0.5) (3){};
					\node[label=west:{$5$}] at (0, 0) (5){};
					\node[label=east:{$7$}] at (0.5, 0) (7){};
					\foreach \p in {2,3,5,7}{
						\draw[fill=black] (\p) circle (0.075cm);
					}
					\draw (2)--(3);
					\draw (2)--(5);
					\draw (2)--(7);
					\draw (3)--(5);
					\draw (5)--(7);
				\end{tikzpicture}
			\end{subfigure} 
			\hspace{0.4cm} (u) \hspace{-0.4cm}
			\begin{subfigure}{.28\textwidth}
				\centering
				\begin{tikzpicture}
					\node[align=center, font=\scriptsize, text width=4cm] at (0.5,1){$[C_5^2 \rtimes_\Frob Q_8] \times [C_{7} \rtimes_\Frob  C_6] \times S_3$};					
					\node[label=west:{$2$}] at (0,0.5) (2){};
					\node[label=east:{$3$}] at (0.5, 0.5) (3){};
					\node[label=west:{$5$}] at (0, 0) (5){};
					\node[label=east:{$7$}] at (0.5, 0) (7){};
					\foreach \p in {2,3,5,7}{
						\draw[fill=black] (\p) circle (0.075cm);
					}
					\draw (2)--(3);
					\draw (2)--(5);
					\draw (2)--(7);
					\draw (3)--(5);
					\draw (3)--(7);
					\draw (5)--(7);
				\end{tikzpicture}
			\end{subfigure}\\
			\hline		
			$\{2, 3, 5, 13\}$& (v) \hspace{-0.5cm}
			\begin{subfigure}{.45\textwidth} 
				\centering
				\begin{tikzpicture}
					\node[align=center, font=\scriptsize, text width=4cm] at (0.35,1.2) {$[C_5^2 \rtimes_{\Frob} Q_8] \times [C_{13} \rtimes_{\Frob} C_6]$};				
					\node[label=west:{$2$}] at (0,0.5) (2){};
					\node[label=east:{$3$}] at (0.7, 0.5) (3){};
					\node[label=west:{$5$}] at (0, 0) (5){};
					\node[label=east:{$13$}] at (0.7, 0) (13){};
					
					\foreach \p in {2,3,5,13}{
						\draw[fill=black] (\p) circle (0.075cm);
					}
					
					\draw (2)--(3);
					\draw (2)--(13);
					\draw (3)--(5);
					\draw (5)--(13);
					\draw (2)--(5);
				\end{tikzpicture}
			\end{subfigure}
			\hfill 
			(w) \hspace{-0.3cm}
			\begin{subfigure}{.45\textwidth}
				\centering
				\begin{tikzpicture}
					\node[align=center, font=\scriptsize, text width=4cm] at (0.35,1.2) {$[C_5^2 \rtimes_{\Frob} Q_8] \times [C_{13} \rtimes_{\Frob} C_6] \times S_3$};				
					
					\node[label=west:{$2$}] at (0,0.5) (2){};
					\node[label=east:{$3$}] at (0.7, 0.5) (3){};
					\node[label=west:{$5$}] at (0, 0) (5){};
					\node[label=east:{$13$}] at (0.7, 0) (13){};
					
					\foreach \p in {2,3,5,13}{
						\draw[fill=black] (\p) circle (0.075cm);
					}
					
					\draw (2)--(3);
					\draw (2)--(5);
					\draw (2)--(13);
					\draw (3)--(5);
					\draw (5)--(13);
					\draw (3)--(13);
				\end{tikzpicture}
			\end{subfigure}\\
			\hline		
			
			$\{2, 3, 7, 13\}$& (y*) \hspace{-0.5cm}
			\begin{subfigure}{.45\textwidth} 
				\centering
				\begin{tikzpicture}
					\node[label=west:{$2$}] at (0,0.5) (2){};
					\node[label=east:{$3$}] at (0.7, 0.5) (3){};
					\node[label=west:{$7$}] at (0, 0) (7){};
					\node[label=east:{$13$}] at (0.7, 0) (13){};
					
					\foreach \p in {2,3,7,13}{
						\draw[fill=black] (\p) circle (0.075cm);
					}
					
					\draw (2)--(3);
					\draw (2)--(13);
					\draw (3)--(7);
					\draw (7)--(13);
					\draw (2)--(7);
					\draw (3)--(13);
				\end{tikzpicture}
			\end{subfigure}\\
			\hline
			
			$\{2, 3, 5, 7, 13\}$ & 
			(z*) \hspace{-0.5cm}
			\begin{subfigure}{.45\textwidth} 
				\centering
				\begin{tikzpicture}
					
					\node[label=west:{$2$}] at (0,1.2) (2){};
					\node[label=east:{$3$}] at (1.4, 1.2) (3){};
					\node[label=west:{$5$}] at (0, 0) (5){};
					\node[label=east:{$7$}] at (1.4, 0) (7){};
					\node[label=east:{$13$}] at (2.8,0.6) (13){};
					
					\foreach \p in {2,3,5,7,13}{
						\draw[fill=black] (\p) circle (0.075cm);
					}
					
					\draw (2)--(3);
					\draw (2)--(5);
					\draw (2)--(7);
					\draw (3)--(5);
					\draw (5)--(7);
					\draw (3)--(7);		
					\draw(2)--(13);
					\draw(3)--(13);
					\draw(5)--(13);
					\draw(7)--(13);

				\end{tikzpicture}
			\end{subfigure}\\
			\hline
				\end{tabular}
	\end{adjustbox}

\caption{\label{Examplesprime} Possible prime graphs of metanilpotent USR groups.}		
\end{figure}

 \Cref{Examplesprime} and \Cref{fig:Examples} respectively contain the prime graphs and the N-prime graphs, possibly realizable by metanilpotent USR groups. The groups listed along graphs in these figures are the examples of metanilpotent USR groups realizing those graphs. For the undecided graphs, no groups are stated.

Note that in the forthcoming sections, we shall be dealing with various subclasses of solvable groups, to check the realizability of prime graphs and N-prime graphs in those classes. For a class $C$, any graph realizable by $C$, which is not in \Cref{Examplesprime} or \Cref{fig:Examples}, shall be taken up separately. Also, if the example stated in \Cref{Examplesprime} or \Cref{fig:Examples} does not belong to $C$, then groups in $C$ with that graph shall be listed explicitly. It may be said that \Cref{Examplesprime} and \Cref{fig:Examples} are used as common references for all the results in this article and anything additionally needed is presented as and when needed.

\begin{center}
		\begin{tabular}{|c|c|c|c|}
		\hline
		&Class $C$ of groups& Undirected graph $\bm{\Gamma}$ & $G \in C$ such that $\Gamma(G) = \bm{\Gamma}$ \\
		
		\hline		
		\multirow{2}{*}{(I)}
		& \multirow{2}{*}{Frobenius }
		&  (c), (e), (g), (h), (l), (p) & as in  \Cref{Examplesprime}\\
		& & $\Gamma_1 = (2-5 ~~3)$ &$C_3^4 \rtimes_{Fr} \langle x, y~ |~ x^5 = y^4 = 1, x^y = x^{-1} \rangle$   \\
	
		\hline
		\multirow{6}{*}{(II)}
		& \multirow{6}{*}{$2$-Frobenius }
		& (c) & $C_2^2 \rtimes (C_3 \rtimes_{Fr} C_2)$\\
		
		& & (e) & $C_2^4 \rtimes (C_5 \rtimes_{Fr} C_4)$\\
		
		& & (g) & $C_3^6 \rtimes (C_7 \rtimes_{Fr} C_3)$\\
		
		& & (h) & $C_3^4 \rtimes (C_5 \rtimes_{Fr} C_2)$\\
		
		& & (l) & $C_2^6 \rtimes (C_7 \rtimes_{Fr} C_6)$\\
		
		& & (p) & $C_2^{12} \rtimes (C_{13} \rtimes_{Fr} C_6)$\\
		\hline
		
		\multirow{5}{*}{(III)}
		& \multirow{5}{*}{Nilpotent-by-abelian }
		& (a)-(d), (g), (j)-(r) & as in \Cref{Examplesprime}\\
		&& (e) & $C_5 \rtimes_{Fr} C_4$\\
		& & (f) & $(C_5 \rtimes _{Fr} C_4) \times C_2$\\
		&& (h)& $C_5^2 \rtimes_{Fr} C_6$\\
		&& (i) & $(C_5^2 \rtimes_{Fr} C_6) \times C_2$   \\
		\hline

		(IV)&Metabelian  & (a)-(r) & as in (III)   \\
		\hline
		
		\multirow{2}{*}{(V)}
		& \multirow{2}{*}{Abelian-by-cyclic  }
		& (a)-(j), (l)-(q) & as in (III) \\
		& & (k) & $(C_{15} \rtimes C_4) \times C_2$  $=$ SG$[60,7] \times C_2$ \\
		\hline
		\multirow{2}{*}{(VI)}
		& \multirow{2}{*}{Cyclic-by-abelian  }
		& (a)-(g),  (j), (l)-(r) & as in (III)\\
		& & (k)& as in (V)\\
		\hline
		(VII)&Metacyclic  & (a)-(g), (j)-(q)& as in (VI)\\
		\hline
		(VIII)& MP*  & (a)-(g), (j)-(r) & as in (III)\\
		
		\hline
	\end{tabular}
\captionof{table}{Groups realizing prime graphs}\label{Primegraphssubclasses}
	
\end{center}

\section{2-Frobenius and solvable Frobenius USR Groups}
One of the main objectives of this article is to classify the prime graphs realizable by solvable USR groups. Frobenius and $2$-Frobenius groups play an important role in studying the prime graphs of solvable groups. As stated in \Cref{ConnectedComponents}, the prime graph of a solvable group is disconnected if and only if the group is a Frobenius group or a $2$-Frobenius group. In this section, we completely classify the prime graphs realizable by solvable Frobenius and $2$-Frobenius groups.

 It is known that every $2$-Frobenius group is solvable whereas Frobenius groups need not be solvable. Moreover, there exists a Frobenius USR group that is not solvable (see \cite[Main theorem]{PV26}). Using the classification of Frobenius USR groups provided in \cite{PV26}, we obtain that the disconnected graphs (c), (e), (g), (h), (l) and (p) in \Cref{Examplesprime}, together with $\Gamma_1 = (2 - 5~~ 3)$, are realized by solvable Frobenius uniformly semi-rational groups (see \Cref{Primegraphssubclasses}). Consequently, we have the following proposition.

\begin{proposition}\label{FrobeniusUSR}
The prime graphs realizable by non-trivial solvable Frobenius groups are precisely the graphs (c), (e), (g), (h), (l) and (p) in \Cref{Examplesprime}, together with $\Gamma_{1}$, where $\Gamma_{1} = (2 - 5 ~~ 3)$.
\end{proposition}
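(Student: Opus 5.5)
The proof has two directions: realizability and exhaustiveness. Here "solvable Frobenius groups" means solvable Frobenius USR groups, as in the sentence preceding the statement.

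\textbf{Realizability.} This part is by explicit examples, listed in row (I) of \Cref{Primegraphssubclasses}.
For (c), (e), (g), (h), (l) and (p), the groups of \Cref{Examplesprime} work:
\[
S_3,\quad C_5^2\rtimes_\Frob Q_8,\quad C_7\rtimes_\Frob C_3,\quad C_5^4\rtimes_\Frob(C_3\times Q_8),\quad C_7\rtimes_\Frob C_6,\quad C_{13}\rtimes_\Frob C_6 .
\]
For $\Gamma_1$, the group $C_3^4\rtimes_\Frob\langle x,y\rangle$ works, where $\langle x, y\rangle\simeq C_5\rtimes C_4$ with $x^y = x^{-1}$.
For each group we must check three things.
\begin{itemize}
\item It is USR. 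This follows either from \cite[Main theorem]{PV26} or directly from \Cref{equivalentUSR}(ii), by computing $B_G(g)$ for kernel and complement elements.
\item It is Frobenius and solvable.
\item Its prime graph is the stated one. In a Frobenius group the kernel primes and complement primes lie in different components, and each component's edges are read off from element orders in the kernel and in the complement. For example, in the $\Gamma_1$ group the complement has an element $xy^2$ of order $10$ and none of order $20$, and the kernel $C_3^4$ is an isolated vertex.
\end{itemize}

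\textbf{Exhaustiveness.} Let $G=N\rtimes K$ be a solvable Frobenius USR group, with kernel $N$ and complement $K$. By \Cref{ConnectedComponents}, $\Gamma(G)$ has exactly two components, namely $\pi(N)$ and $\pi(K)$. By \Cref{Frobenius3Vertices}, $|\pi(G)|\le 3$.

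The main tool is \cite[Main theorem]{PV26}, which classifies Frobenius USR groups. The plan is simply to run through its list of solvable cases and record $\pi(N)$ and $\pi(K)$. If instead one wants a route through the lemmas above, it goes as follows.
\begin{itemize}
\item By \Cref{PrimeSpectrum}, $\pi(G)$ is one of $\{2,3\},\{2,5\},\{3,7\},\{2,3,5\},\{2,3,7\},\{2,3,13\}$.
\item The nilpotent kernel $N$ must have its elements semi-rational with $B_G(g)$ embedded in a quotient of $K$. By Lemma~\ref{PrimePower}, this forces the odd primes $5,7,13$ in the kernel to be paired with complements containing elements of order $2$, $3$ and $6$ respectively.
\item The complement $K$ has cyclic or generalized quaternion Sylow subgroups. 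Combined with Proposition~\ref{CyclicSylow} and Corollary~\ref{G2G3Exclusions}, this restricts the edges inside $\pi(K)$.
\end{itemize}

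One then argues case by case.
\begin{itemize}
\item With two primes, the graph is disconnected and so has no edge, giving (c), (e) and (g).
\item With three primes, one component is a single vertex and the other is an edge. The kernel is then a $p$-group for $p\in\{2,3,5,7,13\}$, or the kernel carries two primes.
\item Kernel with two primes: the Frobenius complement must act fixed-point-freely. Lemma~\ref{PrimePower} then forces the element orders needed in $B_G$ into $K$, and this yields contradictions except in the listed cases. For instance, a kernel containing an element of order $15$ would need $4\cdot 2\mid |B_G(g)|$ compatible with uniform $m$.
\item Kernel a $p$-group: the complement is a two-prime group with an edge. If $2\in\pi(N)$, then $K$ has odd order and is therefore metacyclic. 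The options are $\{3\}\cup\{$isolated$\}$ combinations, producing (h) with kernel a $5$-group, (l) with $7$, (p) with $13$, and $\Gamma_1$ with kernel a $3$-group and complement $\{2,5\}$.
\item The remaining patterns, such as an isolated $2$, or a $7$- or $13$-kernel with the wrong complement, are excluded by Lemma~\ref{PrimePower} together with Theorem~\ref{Edges}(i). Theorem~\ref{Edges}(i) requires $2-3$ whenever $13\in\pi(G)$.
\end{itemize}

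The main obstacle is exhaustiveness, specifically excluding the two-prime kernels and the less obvious splittings of $\{2,3,5\}$. For these I would rely on the explicit classification in \cite{PV26} rather than re-derive it.
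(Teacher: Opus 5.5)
Your proposal is correct and follows the paper's approach: realizability comes from the examples in row (I) of \Cref{Primegraphssubclasses}, and exhaustiveness comes from the classification in \cite[Main theorem]{PV26}. The two-prime-kernel analysis you flag as the main obstacle is unnecessary. After \Cref{Frobenius3Vertices}, \Cref{PrimeSpectrum} and \Cref{ConnectedComponents} reduce you to two-component graphs on at most three admissible primes, \Cref{Edges}(i) eliminates $(3-5~~2)$, $(2-7~~3)$, $(3-7~~2)$ and every graph on $\{2,3,13\}$ other than (p). Every remaining graph is already on the list, including those a kernel such as $\{2,5\}$ or $\{2,3\}$ would produce, so the argument works entirely at the level of graphs.
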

	
\begin{remark}
Note that $\Gamma_{1} = (2 - 5 ~~ 3)$ is not realizable by any inverse semi-rational group but by a USR group. It has been observed in \cite{BKMdR24} that if $G$ is an inverse semi-rational group and $|\pi(G)|\geq 3$, then $2-3 \in \bm{\Gamma}$. This is no longer true in USR groups, in general. Though, we shall observe in Section~$4$, that this holds good for metanilpotent USR groups.
	\end{remark}

We next consider $2$-Frobenius USR groups.
\begin{proposition} \label{2-frobenius}
The prime graphs realizable by non-trivial  $2$-Frobenius groups are precisely (c), (e), (g), (h), (l) and (p) in \Cref{Examplesprime}.
\end{proposition}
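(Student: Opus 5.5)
The statement says "2-Frobenius groups", but I read it as 2-Frobenius USR groups, since the paper's proof of Theorem~\ref{Frobenius3Vertices} and the table concern USR groups. The proof has two parts: realizability by explicit examples, and an exclusion argument showing no other graph can occur.

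\emph{Realizability.} For each graph (c), (e), (g), (h), (l), (p), Table~\ref{Primegraphssubclasses}(II) lists a 2-Frobenius group:
\[
C_2^2 \rtimes (C_3 \rtimes_{Fr} C_2),\quad C_2^4 \rtimes (C_5 \rtimes_{Fr} C_4),\quad C_3^6 \rtimes (C_7 \rtimes_{Fr} C_3),
\]
\[
C_3^4 \rtimes (C_5 \rtimes_{Fr} C_2),\quad C_2^6 \rtimes (C_7 \rtimes_{Fr} C_6),\quad C_2^{12} \rtimes (C_{13} \rtimes_{Fr} C_6).
\]
In each case the lower kernel is the regular-type module (over $\mathbb{F}_2$ or $\mathbb{F}_3$) on which the Frobenius kernel $C_r$ acts fixed-point-freely. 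For each group I would check three things.
\begin{itemize}
\item It is 2-Frobenius, by checking $F_1$, $F_2$ and the two Frobenius conditions.
\item Its prime graph is the stated one. The components are $\pi(F_2/F_1)$ and the remaining primes, and the only edges inside a component come from the complement or from centralizers in the lower kernel.
\item It is USR. I would verify this via Proposition~\ref{equivalentUSR}(ii), computing $|B_G(g)|$ for representatives of each element order. This is a finite computation, doable by hand or in \textsf{GAP}.
\end{itemize}

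\emph{Exclusion.} Let $G$ be a 2-Frobenius USR group. By Proposition~\ref{ConnectedComponents}, $\Gamma(G)$ is disconnected. By Theorem~\ref{Frobenius3Vertices}, $|\pi(G)|\le 3$. By Lemma~\ref{GKLNS} and \cite[Main theorem]{PV26}, $G/F(G)$ is a Frobenius USR group with cyclic kernel and cyclic complement. So, as in the proof of Theorem~\ref{Frobenius3Vertices}, it is one of
\[
C_3\rtimes C_2,\quad C_5\rtimes C_4,\quad C_5\rtimes C_2,\quad C_7\rtimes C_3,\quad C_7\rtimes C_6,\quad C_{13}\rtimes C_6 .
\]
The upper kernel prime $r\in\{3,5,7,13\}$ is then an isolated vertex, and every other prime lies in the other component. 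Since $F_1(G)$ is non-cyclic and $\pi(G)$ must occur in Lemma~\ref{PrimeSpectrum}, I would list the disconnected graphs on each allowed spectrum of size at most $3$ in which $r$ is isolated. For two vertices these are (c), (e), (g). For three vertices the candidates are:
\begin{itemize}
\item on $\{2,3,5\}$: $5$ isolated with $2-3$, giving (h); or $3$ isolated with $2-5$, giving $\Gamma_1$;
\item on $\{2,3,7\}$: $7$ isolated, giving (l);
\item on $\{2,3,13\}$: $13$ isolated, giving (p).
\end{itemize}
Other disconnected configurations are ruled out by Lucido's three prime lemma and Theorem~\ref{Edges}(i), which forces $2-3$ whenever $13\in\pi(G)$.

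The main obstacle is excluding $\Gamma_1=(2-5~~3)$ for 2-Frobenius groups, which is exactly what distinguishes this list from Proposition~\ref{FrobeniusUSR}. Here $r=3$, so $G/F(G)\simeq C_3\rtimes C_2$ and $\pi(F(G))\supseteq\{5\}$, with $5\nmid |G/F(G)|$. I would take $x$ of order $5$ in $\mathcal{Z}(F(G))$, or in $F(G)$ if $G_5$ is central enough. Then $B_G(x)$ embeds into a section of $G/F(G)$, so it has order dividing $2$, or $6$ intersected with $\Aut(C_5)$, that is, order at most $2$. But Lemma~\ref{PrimePower} requires an element of order $2$ in $B_G(x)$, with index exactly $2$ in $\Aut(\langle x\rangle)\simeq C_4$. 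This alone is not a contradiction. So I would refine it: $2-5$ is an edge, hence there is an element $y$ of order $10$. By USR and Proposition~\ref{equivalentUSR}, $|B_G(y)|\ge \varphi(10)/2=2$, and a $2$-element must invert $y$ modulo the centralizer. Then $B_G(x)$ for $x$ of order $5$ needs an element acting as $-1$, while an element of order $4$ in $G$ centralizing $y$'s $5$-part is impossible by Corollary~\ref{G2G3Exclusions}, since $G_2$ is cyclic or quaternion by Proposition~\ref{CyclicSylow}.

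If this gets tangled, I would instead use a minimal-counterexample reduction, as in the proof of Theorem~\ref{Frobenius3Vertices}. Pass to $G/\O_p(G)$ to assume $F(G)$ is a $p$-group. Then analyze whether $F(G)=O_5$ or $O_2$, and use that the Frobenius kernel $C_3$ acts fixed-point-freely on $F(G)$. Fixed-point-freeness forces the centralizer of $x$ to be small enough that no element of order $10$ can arise while keeping $x$ semi-rational. That contradiction completes the exclusion.
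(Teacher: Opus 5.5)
Your reduction follows the paper's route. \Cref{Frobenius3Vertices}, \Cref{ConnectedComponents} and \Cref{Edges} leave only (c), (e), (g), (h), (l), (p) and $\Gamma_1$, and realizability comes from the groups in \Cref{Primegraphssubclasses}(II). For those groups the complement action must be fixed explicitly, as the paper does with matrices, before a USR check means anything. Your remark that $\pi(F_2(G)/F_1(G))$ is a whole component, so that $\Gamma_1$ forces $G/F(G)\simeq C_3\rtimes C_2$ at once, is correct and shorter than the paper's separate elimination of $C_5\rtimes C_4$ and $C_5\rtimes C_2$.

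\textbf{The gap is the final contradiction for $\Gamma_1$.}

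\emph{The appeal to \Cref{CyclicSylow} is unjustified.} That result needs a normal Sylow $p$-subgroup with no element of order $2p$. Here the only prime not adjacent to $2$ is $3$, and $G_3\simeq C_3$ is not normal, since it acts fixed-point-freely on $F(G)\neq 1$. $G_5$ is normal, but $2-5$ is an edge. In fact, if $O_2(G)\neq 1$ then it admits a fixed-point-free automorphism of order $3$, so neither $O_2(G)$ nor $G_2$ is cyclic or quaternion.

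\emph{\Cref{G2G3Exclusions}(i) does not help.} It only rules out elements of order $20$, and none arises: the $2$-element inverting $y$ inverts $y_5$ rather than centralizing it.

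\emph{The fallback does not close the gap either.} Factoring out $O_5(G)=G_5$ deletes the vertex $5$, so only $G/O_2(G)$ is available. You do not check that this quotient still realizes $\Gamma_1$. And fixed-point-freeness of $C_3$ on $F(G)$ is not what produces the contradiction.

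\textbf{What is missing is a count of the $2$-part.}

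Put $\bar G=G/O_2(G)$, which is USR. If $\bar u\in\bar G$ has order $n\in\{6,15\}$, a preimage $u$ has order $n2^j$, and $u^{2^j}$ has order $n$ in $G$. So $3$ is still isolated in $\Gamma(\bar G)$. As $\pi(\bar G)=\{2,3,5\}$, Lucido's lemma forces $2-5$, hence $\Gamma(\bar G)=\Gamma_1$.

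Since $F(G)=O_2(G)\times O_5(G)$, we get $|\bar G|_2=|G/F(G)|_2=2$. Take $y\in\bar G$ of order $10$. As $|B_{\bar G}(y)|\geq\varphi(10)/2=2$ and $\Aut(\langle y\rangle)\simeq C_4$, some element inverts $y$. An odd power of it is a $2$-element $s$ inverting $y$.

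Then $s$ centralizes the involution $y^5$. But $s\notin\langle y^5\rangle$, because $s$ inverts $y^6$, which has order $5$. So $\langle s,y^5\rangle$ is a $2$-group of order at least $4$, contradicting $|\bar G|_2=2$.

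This is the paper's argument: $B_G(g)\simeq C_2$ and $2\mid|C_G(g)|$ give $4\mid|N_G(\langle g\rangle)|$, applied after passing to $G/O_2(G)$. You had its first half, the $2$-element inverting $y$, but then looked for elements of order $20$ instead of bounding the Sylow $2$-subgroup.
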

	
	\begin{proof} Let $\bm{\Gamma} = \Gamma(G)$ be the prime graph realized by a non-trivial $2$-Frobenius USR group $ G$. It follows from \Cref{Frobenius3Vertices} that $\bm{\Gamma}$ has at most $3$ vertices. In view of \Cref{ConnectedComponents}, we have that either $\bm{\Gamma}$  has exactly two vertices and no edge or $\bm{\Gamma}$ has three vertices and one edge. 
\begin{description}		
\item[Case I] \underline{$\bm{\Gamma}$ has two vertices.} \\
In this case, $\pi(G)=\{2,3\}$, $\{2,5\}$ or $\{3,7\}$. Note that (c) $= (2~~3)$, (e) $= (2~~5)$ and (g) $= (3~~7)$ are respectively realized by $2$-Frobenius inverse semi-rational groups $C_2^2 \rtimes (C_3 \rtimes_{Fr} C_2)$, $C_2^4 \rtimes (C_5 \rtimes_{Fr} C_4)$ and $C_3^6 \rtimes (C_7 \rtimes_{Fr}C_3)$, as observed in \cite[Proposition~5.3]{BKMdR24}. \\
		  
\item[Case II] \underline{$\bm{\Gamma}$ has three vertices.} 
\begin{itemize}
\item[Subcase I]\underline{$\pi(G)=\{2,3,7\}.$}\\
 If $\pi(G)=\{2,3,7\}$, then $\bm{\Gamma} $ has exactly one of the edges $2-3$, $3-7$ or $2-7$. By \Cref{Edges}, neither $3-7$ nor $2-7$ can be the only edge of $\bm{\Gamma}$. Furthermore, \linebreak $(l) = (2-3~~ 7)$ is realized by the $2$-Frobenius inverse semi-rational group \linebreak $C_2^6 \rtimes (C_7 \rtimes_{Fr} C_6)$, as observed in \cite[Proposition~5.3]{BKMdR24}. 
\item[Subcase II] \underline{$\pi(G)=\{2,3,13\}.$}\\
If $\pi(G)=\{2,3,13\}$, then the only possibility for $\bm{\Gamma}$ is (p) $=(2 - 3~~ 13)$ and we prove that there exists a $2$-Frobenius USR group $G$ such that $\Gamma(G)=(2-3~~13)$. For this, consider the matrices

\[
\begin{adjustbox}{max width=\textwidth}
	\(
	\small
	\setlength{\arraycolsep}{3.2pt}
	\begin{array}{@{}c@{\qquad}c@{}}
		A =
		\left(
		\begin{array}{rrrrrrrrrrrr}
			0 & 0 & 0 & 0 & 0 & 0 & 0 & 0 & 0 & 0 & 0 & -1 \\
			1 & 0 & 0 & 0 & 0 & 0 & 0 & 0 & 0 & 0 & 0 & -1 \\
			0 & 1 & 0 & 0 & 0 & 0 & 0 & 0 & 0 & 0 & 0 & -1 \\
			0 & 0 & 1 & 0 & 0 & 0 & 0 & 0 & 0 & 0 & 0 & -1 \\
			0 & 0 & 0 & 1 & 0 & 0 & 0 & 0 & 0 & 0 & 0 & -1 \\
			0 & 0 & 0 & 0 & 1 & 0 & 0 & 0 & 0 & 0 & 0 & -1 \\
			0 & 0 & 0 & 0 & 0 & 1 & 0 & 0 & 0 & 0 & 0 & -1 \\
			0 & 0 & 0 & 0 & 0 & 0 & 1 & 0 & 0 & 0 & 0 & -1 \\
			0 & 0 & 0 & 0 & 0 & 0 & 0 & 1 & 0 & 0 & 0 & -1 \\
			0 & 0 & 0 & 0 & 0 & 0 & 0 & 0 & 1 & 0 & 0 & -1 \\
			0 & 0 & 0 & 0 & 0 & 0 & 0 & 0 & 0 & 1 & 0 & -1 \\
			0 & 0 & 0 & 0 & 0 & 0 & 0 & 0 & 0 & 0 & 1 & -1
		\end{array}
		\right),
		&
		B =
		\left(
		\begin{array}{rrrrrrrrrrrr}
			0 & 0 & 0 & 0 & 0 & 0 & 0 & 0 & 0 & 1 & 0 & 0 \\
			0 & 0 & 0 & 0 & 0 & 0 & 1 & 0 & 0 & 0 & 0 & 0 \\
			0 & 0 & 0 & 1 & 0 & 0 & 0 & 0 & 0 & 0 & 0 & 0 \\
			1 & 0 & 0 & 0 & 0 & 0 & 0 & 0 & 0 & 0 & 0 & 0 \\ 
			0 & 0 & 0 & 0 & 0 & 0 & 0 & 0 & 0 & 0 & 1 & 0 \\
			0 & 0 & 0 & 0 & 0 & 0 & 0 & 1 & 0 & 0 & 0 & 0 \\
			0 & 0 & 0 & 0 & 1 & 0 & 0 & 0 & 0 & 0 & 0 & 0 \\
			0 & 1 & 0 & 0 & 0 & 0 & 0 & 0 & 0 & 0 & 0 & 0 \\
			0 & 0 & 0 & 0 & 0 & 0 & 0 & 0 & 0 & 0 & 0 & 1 \\
			0 & 0 & 0 & 0 & 0 & 0 & 0 & 0 & 1 & 0 & 0 & 0 \\
			0 & 0 & 0 & 0 & 0 & 1 & 0 & 0 & 0 & 0 & 0 & 0 \\
			0 & 0 & 1 & 0 & 0 & 0 & 0 & 0 & 0 & 0 & 0 & 0
		\end{array}
		\right)
	\end{array}
	\)
\end{adjustbox}
\]

	Reducing these matrices modulo $2$ gives a $12$-dimensional faithful representation of $H = C_{13} \rtimes_\Frob C_6$ over $\mathbb{F}_2$. So, we form the semi-direct product $G = C_2^{12} \rtimes (C_{13} \rtimes_\Frob C_6)$, where $C_2^{12}$ is identified as a $12$-dimensional $\F_2$-vector space and $C_{13}\rtimes_\Frob C_6$ acts by the above representation. Observe that $A$ is the companion matrix of the $13^{\mathrm {th}}$ cyclotomic polynomial. Hence, $F_2(G) = C_2^{12} \rtimes_\Frob C_{13}$ is a Frobenius group and $G$ is a $2$-Frobenius group with the desired prime graph $(2-3   13)$.  
						
In order to see that $G$ is a USR group, we begin by observing that order of every element of $G$ is either $13$ or divides $12$.
Let $a$ and $b$ be generators of $H$ of order $13$ and $6$, respectively. It is clear that $a$ is a semi-rational element in $H$ with $m_a=11$. So, in order to prove that $G$ is USR, it suffices to check that each element $g$ of order $12$ in $G$ is semi-rational  with $m_g=11$. Let $g$ be an element of order $12$ in $G$. Consider the Hall $13'$-subgroup $K = N\rtimes C_6$, where $N = F(G) \simeq C_2^{12}$.
By Hall's Theorem, $g$ is conjugate to an element in $K$ and therefore we may assume that $g\in K$.
The centre $\mathcal{Z}(K)$ of $K$ is generated by  $f_1$ and $f_2$, where $ f_1 = (1,0,1,1,0,0,0,0,1,1,0,1) $ and $f_2 = (0,1,0,0,1,1,1,1,0,0,1,0)$. Let $Y_1$ be the subset of $N$ consisting of all elements for which the number of zero entries at the coordinate positions $1$, $3$, $4$, $9$, $10$ and $12$ is odd. Similarly, let $Y_2$ be the subset of $N$ consisting of all elements for which the number of zero entries at the coordinate positions $2$, $5$, $6$, $7$, $8$ and $11$ is odd. Let $X=Y_1\cup Y_2$, so that $|X|=3 \cdot 2^{10}$ and  $X=X_1 \dot{\cup} X_2 \dot{\cup} X_3,$ where $X_1=Y_1\cap Y_2, X_2=Y_1{\setminus} Y_2 ~\mathrm{and}~ X_3=Y_2{\setminus} Y_1$.
						
Then $g=xb$ or $g=xb^{-1}$ with $x\in X$ and we may assume without loss of generality that $g=xb$. We now compute the centralizer $C_K(g)$.
Indeed, let $h\in \C_K(g)$ and write $h=yb^i$ with $y\in C_2^{12}$ and $0\leq i \leq 5$. Since $\C_K(g)\cap N=\mathcal{Z}(K)$ and $g^ih^{-1}\in \C_K(g)\cap N$, we have that $ h = g^{i}, f_1 g^{i}, f_2 g^{i}$ or $f_1 f_2 g^{i}$.
Now,
\[ g^6 = x x^b x^{b^2} x^{b^3} x^{b^4} x^{b^5}=
\begin{cases}
f_1 f_2, & \text{if } x \in X_1, \\
f_1,     & \text{if } x \in X_2,\\
f_2,    & \text{if } x \in X_3.
\end{cases}
\]
Therefore,
\[h =
\begin{cases}
g^{i} ~\mathrm{or}~ \; f_1 g^{i}  ~\mathrm{or}~  \; f_1 g^{6+i}  ~\mathrm{or}~  \; g^{6+i}, & \text{if } x \in X_1\cup X_3, \\[4pt]
g^{i} ~\mathrm{or}~  \; g^{6+i} ~\mathrm{or}~  \; f_2 g^{i} ~\mathrm{or}~  \; f_2 g^{6+i}, & \text{if } x \in X_2.
\end{cases}
\]
It follows that $|C_K(g)| = 24$. Hence, the conjugacy class of $g$ in $K$ has cardinality $|K|/24 = 2^{10}$. For $j=1,2,3$, the set $X_j b = \{\, x' b : x' \in X_j \,\}$ is closed under conjugation in $K$. Hence $X_j b$ contains the conjugacy class of $g$ in $K$, if $g\in X_jb$. Also, $|X_j b| = 2^{10}$. It follows that for $g=xb$,  the conjugacy class of $g$ in $K$ is $X_j b $, if $x\in X_j$.
Now,
\[g^7 =
\begin{cases}
f_1 f_2 g, & \text{if } x \in X_1, \\
f_1 g,     & \text{if } x \in X_2, \\
f_2 g,     & \text{if } x \in X_3.
\end{cases}
\]
Equivalently,
\[g^7 =
\begin{cases}
f_1 f_2 x b \in X_1 b, & \text{if } x \in X_1, \\
f_1 x b \in X_2 b,     & \text{if } x \in X_2, \\
f_2 x b \in X_3 b,     & \text{if } x \in X_3.
\end{cases}
\]
Therefore, $g$ and $g^7$ are conjugate in $K$ and hence $g \text{ is semi-rational, with } m_g = 11$.

\item[Subcase III]\underline{$\pi(G)=\{2,3,5\}.$}\\ If $\pi(G)=\{2,3,5\}$, then $\bm{\Gamma} $ has exactly one of the edges $2-3$, $3-5$ or $2-5$. By \Cref{Edges}, $3-5$ cannot be the only edge of $\bm{\Gamma}$. We prove that $\bm{\Gamma}= (2 -3~~ 5)$ is realized by a $2$-Frobenius USR group whereas $\bm{\Gamma}= (3~~2-5)$ is not realizable by a $2$-Frobenius USR group.

As in Subcase II of Case II, one can define a $2$-Frobenius USR group $C_3^4 \rtimes (C_5 \rtimes_{Fr} C_2)$ which realizes the graph (h) using 
\[
\left\langle A=
\begin{pmatrix}
0 & 0 & 0 & -1 \\
1 & 0 & 0 & -1 \\
0 & 1 & 0 & -1 \\
0 & 0 & 1 & -1
\end{pmatrix},
\quad
B= \begin{pmatrix}
0 & 0 & 0 & 1 \\
0 & 0 & 1 & 0 \\
0 & 1 & 0 & 0 \\
1 & 0 & 0 & 0
\end{pmatrix}
\right\rangle.
\]
as the matrix realization of the upper Frobenius group.
		
It only remains to prove that there is no $2$-Frobenious USR group realizing the prime graph $(2-5 ~~3)$. Assume that $G$ is a $2$-Frobenius group realizing this graph. Then using \cite[Main theorem]{PV26}, $G/F(G) \simeq C_5 \rtimes_{Fr} C_4,\ C_5 \rtimes_{Fr} C_2$ or $C_3 \rtimes_{Fr} C_2$. If $G/F(G) \simeq C_5 \rtimes_{Fr} C_4, \ \text{or}\ C_5 \rtimes_{Fr} C_2$, then $3 \mid |F(G)|$. Hence, neither $2$ nor $5$ divides $|F(G)|$. Thus $F(G)$ is a $3$-group, which is not possible since there exists $g$ such that $|g| = 2 \cdot 5 = 10$.
Therefore, $G/F(G) \simeq C_3 \rtimes_{Fr} C_2$ and hence $F(G)$ is either a $5$-group or a $\{2,5\}$-group. Suppose first that $F(G)$ is a $5$-group, so that Sylow $2$-subgroup of $G$ is $C_2$. Consider an element $g \in G$ such that $|g| = 10$, so that $B_G(g)\simeq C_2$. Since $B_G(g)=N_G(\langle g\rangle)/C_G(g)$
and $2\mid |C_G(g)|$, it follows that $4\mid |N_G(\langle g\rangle)|$, and hence $4\mid |G|$, a contradiction.
 If $F(G)$ is a $\{2,5\}$-group, then $G/\O_2(G)$ is uniformly semi-rational group having the same prime graph. Then $G/\O_2(G)$ is a Frobenius group or a
  $2$-Frobenius group. But by \cite[Main theorem]{PV26}, $G/\O_2(G)$ can not be a Frobenius group and hence $G/\O_2(G)$ is a $2$-Frobenius group which is not possible, by using similar arguments as above.
\end{itemize}
\end{description}
\end{proof}

\section{Metanilpotent groups} In this section, we consider metanilpotent USR groups. A group $G$ is called metanilpotent if it has a normal subgroup $N$ such that both $N$ and $G/N$ are nilpotent.
In \Cref{Examplesprime}, we provide a list of the possible prime graphs of metanilpotent USR groups.
It may be noted that the groups in \Cref{Examplesprime} are constructed using inverse semi-rational groups in \cite{BKMdR24}, Frobenius USR groups in \cite{PV26} and the fact that if $G$ is a USR group and $H$ is a rational group then $G \times H$ is a USR group (see \cite[Proposition~4.9]{CM25}). We now eliminate prime graphs which cannot be realized by a metanilpotent USR group.\\

If $G$ is a metanilpotent USR group, then $\pi(G)$ is one of the choices listed in \Cref{PrimeSpectrum}. If $|\pi(G)| \leq 2$, then each of the prime graphs (a) to (g) as listed in \Cref{PrimeSpectrum} is realized by a metanilpotent USR group, as the indicated groups are all metanilpotent USR groups. Also, the connected graph with $\pi(G) = \{3, 7\}$ is not possible in view of \Cref{Edges}. So, we next consider the cases when $|\pi(G)| \geq 3$.
\begin{description}
\item[Case I] $\underline{|\pi(G)| =3}.$\\ Firstly, the prime graphs with three vertices and no edges are excluded by \Cref{ConnectedComponents}. As listed in \Cref{Examplesprime}, the graphs (h)-(r) are realized by metanilpotent USR groups. Amongst the remaining possibilities, we only have $(2-5~~3)$ and $(2-3-13)$ as the possible prime graphs in view of \Cref{Edges}. We check that $(2-5~~3)$ is not possible, though  $(2-3-13)$ remains undecided. Consider $(2-5~~3)$, which is disconnected and hence is essentially a prime graph of a Frobenius or a $2$-Frobenius group (see \Cref{ConnectedComponents}). Note that by \cite[Main theorem]{PV26}, there does not exist a Frobenious metanilpotent USR group realizing this graph. Also, no $2$-Frobenius group is a metanilpotent group. Thus $(2-5~~3)$ is not realizable by a metanilpotent USR group. \\

\item[Case II] $\underline{|\pi(G)| = 4}.$\\ Since there are many possibilities in this case, we further divide this case into subcases as per elements of $\pi(G)$. Similar to Case I, we decide the realizability of all prime graphs, except for the complete prime graph with $\pi(G) = \{2, 3, 7, 13\}.$\\

\begin{itemize}

\item[Subcase I] $\underline {\pi(G) = \{2, 3, 5, 7\}}.$ \\
The graphs (s)-(u) in \Cref{Examplesprime} are realized by metanilpotent USR groups, as specified in \Cref{Examplesprime}.

We observe that if $\bm{\Gamma}$ is the prime graph of a metanilpotent USR group $G$, then $\bm{\Gamma}$ has the edge $5-7$. This is because $G/F(G)$ is a nilpotent USR group and hence its prime spectrum is contained in $\{2, 3\}$. Therefore $G_{\{5, 7\}}$ is contained in $F(G)$. Thus $F(G)$ and hence $G$ has elements of order $5 \cdot 7$. Using this fact and  \Cref{Edges}, along with realizability of graphs (s)-(u) by metanilpotent USR groups, we only need to check the possibility of prime graph with $5$ edges in which $2-5$ edge is missing. If $G$ is a metanilpotent USR group of minimal order with such a graph, then \Cref{CyclicSylow} implies that $G_2 \simeq C_2,\, C_4,\, Q_8 \text{ or } Q_{16}$. Since $G$ has an element of order $2 \cdot 5$, then $G_2$ is not isomorphic to $C_2$ and using  \Cref{G2G3Exclusions}, $G_2$ is not isomorphic to $C_4$. Also, $G_2 \not\simeq Q_{16}$ because otherwise $G/F(G) \simeq Q_{16} \times K$, where $K$ is a $3$-group. This contradicts the fact that $G/F(G)$ is a USR group. It only remains to prove that $G_2 \not\simeq Q_8$. Assume $G_2 \simeq Q_8$. Observe first that $3 \nmid |F(G)|$, because otherwise there exists an element of order $3 \cdot 5 \cdot 7$, which is not possible. Hence, $F(G)$ is a Hall $\{5,7\}$-subgroup, so that $F(G)\simeq G_5 \times G_7.$ We claim that $G_5$ is an elementary abelian group whose every element is rational in $G$. To check that $G_5$ is elementary abelian minimal normal subgroup of $G$, consider a  minimal normal subgroup $A$ properly contained in $G_5$ which yields that $G/A$ is a metanilpotent USR group realizing the same graph as $G$, a contradiction. Thus, $G_5$ is an elementary abelian $5$-group. Further, we check that for any element $x \in G_5$, $x$ is rational in $G$. For this, consider an element $y$ of order $7$ in $G_7$. Let $g = xy$, so that $|g| = 35$ and $B_G(g)$ contains an element $\phi$ of order $12$, where $\phi : \langle g \rangle \to \langle g \rangle$ is such that $\phi(g) = g^i$ with $\gcd(i,35) = 1$. Now, $\phi(g^7) = g^{7i} = l^{	-1} g^7 l$ for some $l \in G$, whose order is divisible by $12$. Clearly, elements of the set $\{g^7, g^{7i}, g^{7i^2}, g^{7i^3}\}$ belong to the same conjugacy class. To check that, these are distinct, suppose $g^{7i^{k_1}}
= g^{7i^{k_2}}$ where $k_1 \neq k_2$ and $0\leq k_1, k_2 \leq 3$. Then $l^{-k_1} g^7 l^{k_1} = l^{-k_2} g^7 l^{k_2} \Rightarrow l^{k_2-k_1} g^7 l^{-(k_2-k_1)} = g^7$ and thus there exists a $2$-element which commutes with a $5$-element, a contradiction. Hence, $x$ is rational in $G$ and every element of $G_5$ is rational in $G$.

Take
	\[
	H = \frac{G}{G_7} \simeq G_5 \rtimes (Q_8 \times G_3).
	\]
 Every element of order $5$ in $H$ is rational in $H$. $G_5 Q_8$ is the unique Hall $\{2,5\}$-subgroup of $H$. Considering, $G_5$ as an $F_5Q_8$ module and proceeding as in Case I of \cite[Proposition~4.4]{BKMdR24}, we obtain that $G_5 \simeq C_5 \times C_5$, and consequently $$G \simeq (C_5^2 \times G_7) \rtimes (Q_8 \times G_3).$$
	
The group $G$ acts on $C_5^2$ by conjugation. Hence, we obtain a homomorphism \linebreak $\phi : G \to \mathrm{GL}_2(5)$, yielding that 
$|G/ \operatorname{ker} \phi | \mid 24.$ If $3$ does not divide $|G/ \operatorname{ker} \phi |$, i.e., every $3$-element lies in $\operatorname{ker} \phi$, and therefore commutes with every element of order $5$, then $G$ contains an element of order $3 \cdot 5 \cdot 7$, which contradicts the assumption of this case that  $G_2 \simeq Q_8$. Therefore, $| G/ \operatorname{ker} \phi \mid = 24.$ Since $G/ \operatorname{ker} \phi$ is a subgroup of $\mathrm{GL(2,5)}$, it follows that $G/ \operatorname{ker} \phi \simeq C_3 \rtimes C_8, \ C_{24}, \ \mathrm{SL}_2(3)$ or $C_4 \times S_3$ which is not possible, as $G/ \operatorname{ker} \phi$ is a nilpotent USR group.
\item[Subcase II] $\underline{\pi(G) = \{2,3,5,13\}}.$\\ If $\bm{\Gamma}$ is the prime graph of a metanilpotent USR group $G$, then proceeding as in Subcase I, we obtain that $\bm{\Gamma}$ has the edge $5-13$. Using \Cref{Edges}, we obtain that there are possibly two prime graphs realizable by a metanilpotent USR group with $\pi(G) = \{2,3,5,13\}$. Indeed, both these graphs are realized as observed in (v) and (w) of \Cref{Examplesprime}. 

\item[Subase III] $\underline {\pi(G) = \{2,3,7,13\}}.$\\  If $\bm{\Gamma}$ is the prime graph of a metanilpotent USR group $G$, then as observed earlier $\bm{\Gamma}$ has the edge $7-13$. In view of \Cref{Edges}, the only possible prime graphs with this prime spectrum are the complete prime graph and the graph with $5$ edges in which $2-13$ edge is missing. We prove that the prime graph with $5$ edges is not possible whereas the complete graph remains undecided.

\end{itemize}

\begin{center}

\begin{tikzpicture}	
\node[anchor=east] at (-0.5,0.75) {$\Gamma_2 = $};
	
	\node[align=center, font=\scriptsize, text width=4cm] at (0.75,-0.5) {};				
	\node[label=west:{$2$}] at (0,1.5) (2){};
	\node[label=east:{$3$}] at (1.5, 1.5) (3){};
	\node[label=west:{$7$}] at (0, 0) (7){};
	\node[label=east:{$13$}] at (2, 0.75) (13){};
	
	\foreach \p in {2,3,7,13}{
		\draw[fill=black] (\p) circle (0.075cm);
	}
	
	\draw (2)--(3);
	\draw (3)--(13);
	\draw (3)--(7);
	\draw (7)--(13);
	\draw (2)--(7);

\end{tikzpicture}

\end{center}

Let $G$ be a finite metanilpotent USR group of minimal order such that $\Gamma_2 = \Gamma(G)$. Proceeding similar to Subcase I, we get that $G_2 \simeq Q_8$, $F(G)$ is a Hall $\{7, 13\}$-subgroup of G and $G_{13}$ is an elementary abelian subgroup of $G$. Let $x \in G_{13}$ and let $y \in G_7$ be an element of order $7$. Suppose $g = xy$, so that $|g| = 13 \cdot 7$ and $B_G(g)$ contains an element of order $12$. Thus, there exists an automorphism $\phi : \langle g \rangle \to \langle g \rangle$ of order $12$ such that $\ \phi(g) = g^i,$ where $\gcd(i,35)=1$. Now, $\phi(g^7) = g^{7i} = l^{-1} g^7 l$, where $l \in G$ and $12$ divides $|l|$. We claim that $g^7$ is rational in $G$. Suppose that $g^7$ is not rational in $G$. Since $12$ divides the order of $l$, the elements of $S_1 = \{g^7, g^{7i}, g^{7i^2}, g^{7i^3}\}$ lie in a single conjugacy class. If $g^{7i^4} \notin S_1$, then we have a contradiction as $g^7$ is not a rational element. If $g^{7i^4} \in S_1$, then there exist elements $g^{7r_1}, g^{7r_2} \in \langle g^7 \rangle$ such that
 $S_m \cap S_n = \phi$ for all distinct $m, n \in \{1, 2, 3\}$, and $S_1 \cup S_2 \cup S_3 = \langle g^7 \rangle$, where
  $S_2 = \{g^{7r_1}, g^{7ir_1}, g^{7i^2r_1}, g^{7i^3r_1}\}$ and $S_3 =  \{g^{7r_2}, g^{7ir_2}, g^{7i^2r_2}, g^{7i^3r_2}\}$. Moreover, for each $t \in \{1, 2, 3 \}$, all elements of $S_t$ lie in a single conjugacy class, and the elements of $S_m$ are not conjugate to the elements of $S_n$ whenever $m \neq n$. Again, using semi-rationality of $g^7$, we get a contradiction. Thus every element of order $13$ is rational in $G$. Take $K$ to be Hall $\{2,3,7\}$-subgroup of $G$. Then, the group homomorphism $\psi : K \rightarrow \Aut(G_{13})$ turns $G_{13}$ into $\mathbb{F}_{13}K$-module, which is irreducible, as $G_{13}$ is a minimal normal subgroup of $G$. Let $\chi$ be the Brauer character of $K$ afforded by $G_{13}$. Then $\chi$ is actually an ordinary character as $13 \nmid |K|$. By \cite[Lemma~3]{Ten12} and the remark following it, the field of character values of $\chi$ is contained in field of character values of each of its irreducible constituents. But then $[\mathbb{Q}(\chi) : \mathbb{Q}] \leq 2.$  Since, all the $13$-elements of $G$ are rational in $G$, the action of $K$ on $G_{13}$ has the $12$-eigen value property. If $L = \mathbb{Q}(\chi)$ then $k = [L(\zeta_{13}) : L] = 6$ or $12$, where $\zeta_{13}$ is a primitive $13^{th}$ root of unity. Also, $[\mathbb{Q}(\chi) : \mathbb{Q}] \leq 2$, which is a contradiction in view of \cite[Theorem~B(c)]{Soa84}.
  
\item[Case III] $\underline{|\pi(G)|=5}.$\\Here, $\pi(G)=\{2,3, 5,7,13\}.$ If $\bm{\Gamma}$ is the prime graph of a metanilpotent USR group $G$, then $5 - 7$, $7 - 13$ and $5 - 13 \in \bm{\Gamma}$ and using \Cref{Edges}, we get that $2 - 7$, $3 - 5$, $2 - 3$, $2 - 5$, $2 - 13$, $3 - 7$ and $3 - 13 \in \bm{\Gamma}$ i.e., $\bm{\Gamma}  $ is a complete graph and this graph $\bm{\Gamma} = (z^*)$ remains undecided.
\end{description}
Therefore, we have the following theorem:
\begin{theorem}\label{Metanilpotent}
The prime graphs realizable by non-trivial metanilpotent groups are precisely \linebreak (a) - (w), (x*), (y*) and (z*) in \Cref{Examplesprime}.
\end{theorem}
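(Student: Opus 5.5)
The theorem summarizes the case analysis carried out just before it, so the proof consists of organizing those steps into a complete argument. There are two directions. \emph{Realizability:} every graph (a)--(w) is realized by the metanilpotent USR group listed beside it in \Cref{Examplesprime}. For each such group I would check two things. First, it is USR: it is built from inverse semi-rational groups of \cite{BKMdR24} or from Frobenius USR groups of \cite{PV26}, and then multiplied by rational groups such as $C_2$, $C_3$ or $S_3$, so \cite[Proposition~4.9]{CM25} applies. Second, it is metanilpotent: a direct product of metanilpotent groups is metanilpotent, and each factor is either a Frobenius group with nilpotent kernel and nilpotent (cyclic or $Q_8\times C_3$) complement, or $\SG{60,7}=C_{15}\rtimes C_4$. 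Reading off the element orders then gives the stated prime graphs. The starred graphs (x*), (y*), (z*) are included in the statement as ``possible'' graphs, so the claim is only that no other graph can occur. This is how I would interpret ``precisely'', consistent with the introduction's remark that these cases are undecided.

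\emph{Exclusion:} let $G$ be a nontrivial metanilpotent USR group. \Cref{PrimeSpectrum} restricts $\pi(G)$ to the twelve listed sets. I would split into cases by $|\pi(G)|$.
\begin{enumerate}
\item[(1)] \emph{$|\pi(G)|\le 2$.} \Cref{Edges}(i) forbids the edge $3-7$ without $2-3$, so for $\pi(G)=\{3,7\}$ only the edgeless graph (g) survives. Every other graph on these vertex sets is already in the list.
\item[(2)] \emph{$|\pi(G)|=3$.} By Lucido's three prime lemma there is at least one edge. \Cref{Edges} then leaves only the listed graphs together with $(2-5~~3)$. That graph is disconnected, so by \Cref{ConnectedComponents} $G$ is Frobenius or $2$-Frobenius. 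A $2$-Frobenius group has Fitting length $3$, so it is not metanilpotent. A Frobenius realization is impossible by \cite{PV26}, since the only one, $C_3^4\rtimes_{Fr}(C_5\rtimes C_4)$, has non-nilpotent complement and is therefore not metanilpotent.
\item[(3)] \emph{$|\pi(G)|\ge 4$.} The key observation is that $G/F(G)$ is a nilpotent USR group, so $\pi(G/F(G))\subseteq\{2,3\}$. Hence the Hall $\{5,7,13\}$-part lies in the nilpotent group $F(G)$, which forces the edges $5-7$, $5-13$ and $7-13$ whenever both primes are present. Combined with \Cref{Edges}, this leaves the listed graphs plus two candidates: the graph on $\{2,3,5,7\}$ missing $2-5$, and $\Gamma_2$.
\end{enumerate}

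The main obstacle is excluding these two remaining candidates. For both I would take a counterexample $G$ of minimal order.
\begin{enumerate}
\item[(i)] \emph{Pin down $G_2$ and $F(G)$.} Use \Cref{CyclicSylow} and \Cref{G2G3Exclusions} to force $G_2\simeq Q_8$. Show that $F(G)$ is the Hall $\{5,7\}$- or $\{7,13\}$-subgroup, and use minimality to make $G_5$ (respectively $G_{13}$) elementary abelian and minimal normal.
\item[(ii)] \emph{Rationality.} For $g=xy$, the order-$12$ element of $B_G(g)$ together with the absence of $2-5$ (or $2-13$) edges shows that the $5$- or $13$-elements are rational.
\item[(iii)] \emph{Module theory.} In the $\{2,3,5,7\}$ case, treat $G_5$ as an $\F_5 Q_8$-module as in \cite[Proposition~4.4]{BKMdR24} to get $G_5\simeq C_5^2$. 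The image of $G$ in $\GL(2,5)$ would then be a non-nilpotent group of order $24$, which contradicts nilpotency. In the $\{2,3,7,13\}$ case, the Brauer (in fact ordinary) character of the Hall $13'$-subgroup acting on $G_{13}$ has field of values of degree at most $2$ by \cite{Ten12}. The action also has the $12$-eigenvalue property, and \cite[Theorem~B(c)]{Soa84} rules this out.
\end{enumerate}

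With $|\pi(G)|=5$, the forced edges $5-7$, $5-13$, $7-13$ together with \Cref{Edges} make the graph complete, which is (z*). This completes the classification.
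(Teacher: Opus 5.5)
Your proposal is correct and follows the paper's argument essentially step for step. It uses the same case split by $|\pi(G)|$, the same use of \Cref{Edges} together with $\pi(G/F(G))\subseteq\{2,3\}$ to force the edges among $5,7,13$, and the same minimal-counterexample exclusion of the two leftover graphs: forcing $G_2\simeq Q_8$, showing the $5$- or $13$-elements are rational, then the $\F_5Q_8$-module and $\GL(2,5)$ argument in one case and the Tent--Soares character argument in the other. Like the paper, you leave (x*), (y*), (z*) undecided.

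One small slip: $C_3$ is not a rational group. So the USR property of the group $[C_7\rtimes_{\Frob}C_6]\times C_3$ realizing (n) does not follow from \cite[Proposition~4.9]{CM25}. It holds because this group is itself inverse semi-rational, as in \cite{BKMdR24}.
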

\begin{remark}\label{Remain}
The graphs which remain undecided in Case II and Case III are realized or not realized by metanilpotent USR groups simultaneously. Because if $G$ is a metanilpotent USR group whose prime graph is $(y^*)$, then $G \times (C_5^2 \rtimes _{Fr} Q_8)$ will be a metanilpotent USR group whose prime graph is $(z^*)$. Conversely, if $G$ is a metanilpotent USR group whose prime graph is $(z^*)$, then $G/O_5(G)$ will be a metanilpotent USR group  whose prime graph is $(y^*)$. 
\end{remark}

\subsection{Nilpotent-by-abelian and metabelian USR groups}
\begin{proposition}\label{nilpotent-by-abelian}
	The following assertions are equivalent for a prime graph $\bm{\Gamma}$:
	\begin{itemize}
		\item[(i)] $\bm{\Gamma}$ is realizable by a nilpotent-by-abelian USR group. 
		\item[(ii)] $\bm{\Gamma}$ is realizable by a metabelian group.
		\item[(iii)] $\bm{\Gamma}$ is trivial or one of the graphs in (a)-(r) in \Cref{Examplesprime}.
	\end{itemize}
\end{proposition}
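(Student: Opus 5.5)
Since every metabelian group is nilpotent-by-abelian, and both are metanilpotent, the plan is to prove (iii)$\Rightarrow$(ii)$\Rightarrow$(i)$\Rightarrow$(iii). Throughout, (ii) is read as realizability by a metabelian USR group.

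For (iii)$\Rightarrow$(ii) I exhibit metabelian USR groups, as listed in row (III) of \Cref{Primegraphssubclasses}. Graphs (a)--(d), (g) and (j)--(r) are realized by the groups of \Cref{Examplesprime}. One checks that these have abelian derived subgroups. For example, $C_7\rtimes_\Frob C_6$ and $C_{13}\rtimes_\Frob C_6$ have cyclic derived subgroups, and direct products of metabelian groups are metabelian. The groups in (e), (f), (h), (i) involve $Q_8$ in a complement, so they are replaced by $C_5\rtimes_\Frob C_4$, $(C_5\rtimes_\Frob C_4)\times C_2$, $C_5^2\rtimes_\Frob C_6$ and $(C_5^2\rtimes_\Frob C_6)\times C_2$. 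For each replacement I verify:
\begin{itemize}
\item it is metabelian, having the form abelian-by-cyclic;
\item it is USR, either by \cite{PV26} in the Frobenius case or via \cite[Proposition~4.9]{CM25}, since the factor $C_2$ is rational;
\item its prime graph is the required one.
\end{itemize}
The implication (ii)$\Rightarrow$(i) is trivial.

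For (i)$\Rightarrow$(iii), let $G$ be nilpotent-by-abelian and USR. Then $G$ is metanilpotent, so by \Cref{Metanilpotent} its prime graph lies among (a)--(w), (x*), (y*), (z*). It suffices to exclude every graph with $13\in\pi(G)$ and $|\pi(G)|\ge 3$ other than (p), (q), (r), together with all graphs on four or five vertices.

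The key tool is a normal nilpotent subgroup $N$ with $G/N$ abelian and USR. An abelian USR group has exponent dividing $4$ or $6$ on its Sylow parts, and has no elements of order $5,7,13$: by \Cref{PrimePower} a semi-rational element of order $p\ge5$ needs $B_G(g)$ nontrivial, but $B_G(g)$ is trivial in an abelian group. Hence $\pi(G/N)\subseteq\{2,3\}$ and all $5$-, $7$-, $13$-parts lie in $N$.

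Consequently, if two of $5,7,13$ lie in $\pi(G)$, then $N$, being nilpotent, contains an element of order their product. Semi-rationality of such an element $g$ (say of order $35$, $65$ or $91$) forces $B_G(g)$ to embed as a subgroup of index at most $2$ in $\Aut(\langle g\rangle)\cong C_4\times C_6$, $C_4\times C_{12}$ or $C_6\times C_{12}$. On the other hand, $B_G(g)$ is a section of $N_G(\langle g\rangle)/C_G(g)$. Since $N\le C_G(g)$ on the relevant part, $B_G(g)$ is a quotient of a subgroup of the abelian group $G/N$ of exponent dividing $12$. The plan is to derive a contradiction by comparing exponents and ranks. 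For instance, $\Aut(C_{91})\cong C_6\times C_{12}$ requires a subgroup of index $\le 2$ whose $2$-part has exponent $4$, with the $3$-part of rank $2$. I will check this is incompatible with the Sylow restrictions of \Cref{CyclicSylow} and \Cref{G2G3Exclusions}, or that the $2$-part of $G/N$ being $C_4$ or $C_2^k$ cannot produce the needed $C_4\times C_2$ acting faithfully on the product.

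This excludes (s)--(w), (y*) and (z*). I expect this step to be the main obstacle, since abelian groups of exponent $12$ can have large rank, and a cruder argument will not suffice. If the direct rank argument fails, I fall back on minimal counterexample reductions, as in Subcase~I above: the Sylow $2$-subgroup must then be $Q_8$ or $Q_{16}$, which is non-abelian and so cannot lie in $G/N$, forcing $2\mid|N|$ and permitting a quotient by $\O_2(G)$.

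Finally, (x*) $=(2-3-13)$ is excluded as follows. Here $G_{13}\le N$ and no element of order $26$ exists. \Cref{CyclicSylow} then gives $G_2$ cyclic of order dividing $4$, or $Q_8$, $Q_{16}$. The quaternion options are non-abelian and would lie partly in $N$, producing a $2$-element in $N$ commuting with $G_{13}$, which contradicts the absence of $2-13$. Hence $G_2$ is cyclic, and by \Cref{G2G3Exclusions} no element of order $4\cdot 3$ exists. But a $13$-element needs $B_G$ containing an element of order $6$ with a $3$-part centralizing it, and this forces $|G_3|$ large. I will aim to show that this makes $G_3$ cyclic, which contradicts \Cref{G2G3Exclusions}(ii) since $3-13$ is an edge.
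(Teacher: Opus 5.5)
Your directions (iii)$\Rightarrow$(ii)$\Rightarrow$(i), the reduction $\pi(G/N)\subseteq\{2,3\}$, and your exclusion of a quaternion $G_2$ in (x*) are fine. The step meant to exclude all graphs on four or five vertices, however, fails.

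First, you weaken the key property of $G/N$. Since $B_{G/N}(x)=1$ for every $x$, semi-rationality gives $\varphi(|x|)\le 2$. So $G/N$ has \emph{no} element of order $12$, and its exponent divides $4$ or $6$. ``Exponent dividing $12$'' is not enough even for $g\in\mathcal{Z}(N)$ of order $65$. There the contradiction (this is the paper's argument) is exactly that every subgroup of index at most $2$ in $\Aut(\langle g\rangle)\cong C_4\times C_{12}$ contains an element of order $12$.

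More seriously, for $|g|=35$ and $|g|=91$ no comparison of exponents and ranks can give a contradiction. $C_2\times C_6$ has index $2$ in $C_4\times C_6$, and $C_6\times C_6$ has index $2$ in $C_6\times C_{12}$; so your claim that the $2$-part must have exponent $4$ is false. Both subgroups satisfy \Cref{equivalentUSR}(iii), and both are sections of the abelian USR group $C_2^a\times C_3^b$. Your fallback via \Cref{CyclicSylow} needs a missing edge $2-p$ with $G_p\unlhd G$. That is unavailable for (t), (u) and (y*).

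The missing idea is a second step using the shape of $B_G(g)$.
\begin{itemize}
\item $B_G(g)$ has index at most $2$, so it contains the $3$-part of $\Aut(\langle g\rangle)$. Lifting, you get a $3$-element $y$ normalizing $\langle g\rangle$ that centralizes the $5$-part (resp.\ $13$-part) $g_0$ of $g$ but not its $7$-part.
\item Hence $y\notin N$, since the $3$-elements of $N$ centralize $G_7\le N$. Consequently $G/N$ has exponent dividing $6$.
\item Because $G/N$ is abelian, no element of $G$ sends $y$ to $y^j$ with $j\not\equiv 1\pmod 3$. So semi-rationality of $h=yg_0$ forces $B_G(h)$ to induce all of $\Aut(\langle g_0\rangle)$.
\item Let $z$ be the $2$-part of an element inducing an automorphism of order $4$ (resp.\ $12$) on $\langle g_0\rangle$. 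Then $z^2\in N$ and $z^2$ inverts $g_0\in N$, contradicting the nilpotency of $N$.
\end{itemize}
The paper's treatment of (t) is of this kind: it uses an element of order $15$ whose $3$-part would have to be inverted modulo the Hall $\{5,7\}$-subgroup.

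Finally, your endgame for (x*) is not an argument, since nothing forces $G_3$ to be cyclic. Instead, once $G_2$ is cyclic, take $h$ of order $39$. Then $|B_G(h)|\in\{12,24\}$, and $C_G(h)$ has odd order because there is no edge $2-13$. Hence $G_2\cong C_4$, and $B_G(h)$, and therefore $G$, contains an element of order $12$. This contradicts \Cref{G2G3Exclusions}(i).
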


\begin{proof}
The implication that (ii) implies (i) is clear and (iii) implies (ii) follows from \Cref{Primegraphssubclasses}. 

We thus show that (i) implies (iii). The graphs (a)-(r) in \Cref{Examplesprime} are realized by nilpotent-by-abelian USR groups as specified in \Cref{Primegraphssubclasses}. We further check that no graph other than (a)-(r) is realizable by a non-trivial nilpotent-by-abelian USR group. If $|\pi(G)| \leq 3$ and $\bm{\Gamma}$ is not one of (a)-(r), then $\bm{\Gamma} = (x^*)$. We prove that no nilpotent-by-abelian USR group realizes this graph. For, if $G$ is such a group then $G_{13} \leqslant F(G)$ i.e., $G_{13}$ is a normal subgroup of $G$. But, there does not exists an element of order $26$. Thus, using \Cref{CyclicSylow},
we get $G_2 \simeq C_2, \ C_4, \ Q_8,$ or $ Q_{16}$. As observed in Subcase~I of Case~II, $G_2 \not\simeq C_2$ and $G_2 \not\simeq C_4$. Moreover, $G_2$ is isomorphic to neither $Q_8$ nor $Q_{16}$, since $G/F(G)$ is abelian, as $G$ is nilpotent-by-abelian.

Now, if $|\pi(G)| \geq 4$ and $\pi(G) = \{2, 3, 5, 7, 13\}$ or $\{2, 3, 5, 13\}$, then there exists $g \in \mathcal{Z}(F(G))$ such that $|g| = 5 \cdot 13$ and hence, $B_G(g)$ contains an element of order $12$, which is a contradiction because exponent of $G/F(G)$ divides $4$ or $6$, as $G/F(G)$ is abelian.

Further, if $\pi(G) = \{2, 3, 5, 7\}$ or $\{2, 3, 7, 13\}$, then none of the graphs (s), (t), (u), (v) or (w) is realizable by a nilpotent-by-abelian USR group. The non-realizability of graph (s) is proved by arguments analogous to those used to prove the non-realizability of graph (x*). We prove the result for graph (t), and the remaining cases follow similarly. Let $G$ be a nilpotent-by-abelian USR group of minimal order which realizes the graph (t). Then $G \simeq (C_5^m \times C_7^n) \rtimes (C_2^l \times C_3)$, where $m, n, l \in \mathbb{N}$. Since $4$ does not divide $\operatorname{exp(G)}$, every $5$-element of $G$ is not rational but semi-rational. Since $G$ contains an element $g$ of order $15$, $B_G(g)$ contains an element $\phi$ of order $2$ such that $\phi(g) = g^i = y^{-1}gy$, where $y \in G$, $\gcd(i, 35) = 1$ and $\phi$ is identity on $5$-part of $g$. Then $g^5$ and $g^{5i}$ are distinct and conjugate elements. Thus $g^{5i} = g^{-5}$ and $y^{-1}g^5y = g^{-5}$, where $2$ divides order of $y$. Hence, $(yK)^{-1}(g^5K)(yK) = g^{-5}K$, where $K$ is the Hall $\{5, 7\}$-subgroup of $G$ which is a contradiction, thereby completing the proof.
\end{proof}

\subsection{Abelian-by-cyclic USR groups}
\begin{proposition}\label{abelian-by-cyclic}
The prime graphs realizable by non-trivial abelian-by-cyclic groups are precisely (a)-(q) in \Cref{Examplesprime}.
\end{proposition}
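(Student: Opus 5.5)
Abelian-by-cyclic groups are metabelian, so \Cref{nilpotent-by-abelian} already confines the possible prime graphs of a non-trivial abelian-by-cyclic USR group to (a)--(r) in \Cref{Examplesprime}. The plan therefore has two parts. First, we show that each of (a)--(q) is realized. Second, we show that (r) $=$ the triangle on $\{2,3,13\}$ is not realized.

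\emph{Realizability.} We run through the examples recorded in \Cref{Primegraphssubclasses}(V).
\begin{itemize}
\item The groups $C_2$, $C_3$, $S_3$, $S_3\times C_2$, $C_5\rtimes_{Fr}C_4$, $(C_5\rtimes_{Fr}C_4)\times C_2$ and $C_7\rtimes_{Fr}C_3$ are each an abelian normal subgroup extended by a cyclic group, possibly times a small abelian factor; here one absorbs the extra $C_2$ or $C_3$ into the abelian kernel.
\item The same applies to $C_5^2\rtimes_{Fr}C_6$ and $(C_5^2\rtimes_{Fr}C_6)\times C_2$.
\item It also applies to $\SG{60,7}=C_{15}\rtimes C_4$ and $C_7\rtimes_{Fr}C_6$, together with its products by $C_2$ and by $C_3$.
\item It applies to $C_{13}\rtimes_{Fr}C_6$ and $(C_{13}\rtimes_{Fr}C_6)\times C_2$.
\item For (k) we use $\SG{60,7}\times C_2$.
\item For (o) the group $[C_7\rtimes_{Fr}C_3]\times S_3$ has abelian normal subgroup $C_7\times C_3$ with quotient $C_3\times C_2\simeq C_6$, so it is abelian-by-cyclic.
\end{itemize}
In each case we only need to name an abelian normal subgroup $A$ with $G/A$ cyclic. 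The USR property is already known from the cited sources, and for direct products with a rational factor it follows from \cite[Proposition~4.9]{CM25}. This step is routine bookkeeping.

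\emph{Exclusion of (r).} Suppose $G$ is abelian-by-cyclic and USR with $\Gamma(G)$ the triangle on $\{2,3,13\}$. Let $A\unlhd G$ be abelian with $G/A$ cyclic.
\begin{enumerate}
\item Since $G/A$ is cyclic and hence nilpotent USR, its prime spectrum lies in $\{2,3\}$ with exponent dividing $4$ or $6$. So $G_{13}\le A$.
\item The quotient $G/A$ is cyclic and USR, hence of order dividing $2$, $3$, $4$ or $6$. In particular $G/A$ contains no element of order $12$.
\item $G$ has an element of order $3\cdot 13$, so by \Cref{G2G3Exclusions}(ii) the Sylow $3$-subgroup $G_3$ is not cyclic. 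It also has an element of order $2\cdot 13$.
\item Take $x\in G_{13}$ and $z\in A$ of order $2$ or $3$ commuting with it, so that $g=xz\in A$ has order $26$ or $39$. Since $A$ is abelian, $A\le C_G(g)$, and therefore $B_G(g)$ is a quotient of a subgroup of $G/A$.
\item By \Cref{PrimePower} applied to the element of order $26$ (or by \Cref{equivalentUSR}), $B_G(g)$ must contain an element of order $6$ acting on the $13$-part. Its full order is at least $\varphi(|g|)/2=6$, respectively $12$.
\end{enumerate}

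The main obstacle is finding such an element $z$ inside $A$ rather than merely in $G$, since the edges $2-13$ and $3-13$ could a priori come from elements outside $A$. The plan is to argue with $C_G(x)$. If no nontrivial $3$-element of $A$ centralizes $x$, then the $3$-element $t$ centralizing $x$ maps to an element of order $3$ in $G/A$. The $13$-element $x$ needs an element of $B_G(x)$ of order $6$ (equivalently of order $12$ up to index $2$), coming from $G/A$. Combined with $t$, this forces the image of $N_G(\langle x\rangle)$ in $G/A$ to contain the commuting $C_3$-image together with the $C_6$ acting. One then tries to show that $G/A$ has order divisible by $9$ or by $12$, contradicting step 2.

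The symmetric argument handles the $2$-part, using that $4\mid|G|$ would be needed. Concretely, consider the element of order $39$. If its $3$-part is outside $A$, then $G/A\simeq C_6$ with the $3$-part of $G/A$ both centralizing $x$ and acting on it with order $3$; this is a contradiction since $G/A$ is cyclic of order $6$ and its unique $C_3$ cannot do both. Hence the $3$-part lies in $A$. Then $g$ of order $39$ lies in $A$ and requires $|B_G(g)|\ge 12$, which is impossible inside a quotient of $G/A$. The same reasoning is expected to treat the $26$ case and close the proof.
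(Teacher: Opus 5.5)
Your proof is correct. The realizability half matches the paper's \Cref{Primegraphssubclasses}(V), but your exclusion of (r) takes a different and leaner route.

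The paper works with $F(G)$. It applies \Cref{PrimeSpectrum} to $G/\O_2(G)$ and $G/\O_3(G)$ to force $G/F(G)\simeq C_6$. It then shows $3\nmid |F(G)|$, since otherwise $\mathcal{Z}(F(G))$ would contain an element of order $39$ whose $B_G$ is a section of $C_6$. Finally it concludes $G_3\simeq C_3$, which contradicts \Cref{G2G3Exclusions}(ii).

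You stay with the abelian kernel $A$. For a $13$-element $x\in A$, the chain $6\le |B_G(x)|\le |N_G(\langle x\rangle)/A|\le |G/A|\le 6$ forces both $G/A\simeq C_6$ and $C_G(x)=A$. Hence every element of order $39$ centralizes its own $13$-part and lies in $A$, so its $B_G$ has order at most $6<12$. This needs only \Cref{equivalentUSR}. It avoids the prime-spectrum list, \Cref{CyclicSylow} and \Cref{G2G3Exclusions}, and any minimal-counterexample setup. The paper's Fitting-subgroup phrasing runs parallel to its arguments for the other subclasses, at the cost of that extra input.

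You should tidy the ending, though. Since (r) contains the edge $3-13$, the order-$39$ argument already finishes the proof. So drop the exploratory paragraph (``order divisible by $9$ or by $12$''), the unused step 3, and the final sentence about the $26$ case.

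That last sentence is also not right as stated. For $g=xs$ of order $26$, your reasoning only gives $s\in C_G(x)=A$ and $|B_G(g)|\ge\varphi(26)/2=6$. This is compatible with $G/A\simeq C_6$, so no contradiction arises there. Fortunately none is needed.
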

\begin{proof}
The graphs (a)-(q) in \Cref{Examplesprime} are realized by abelian-by-cyclic groups as specified in \Cref{Primegraphssubclasses}(V). In view of \Cref{nilpotent-by-abelian}, it suffices to show that there is no abelian-by-cyclic USR group realilzing the graph (r). Let $G$ be an abelian-by-cyclic group of minimal order realizing (r). Then there exists $N \unlhd G$ such that $N$ is abelian and $G/N$ is cyclic. In particular, $N \leqslant F(G)$, and hence

\[G/F(G) \simeq C_2,\ C_3,\ C_4\ \text{or } C_6.\]

Since both the groups $G/O_2(G)$ and $G/O_3(G)$ are USR, the subgroup $F(G)$ cannot contain a Sylow $2$-subgroup or a Sylow $3$-subgroup. It follows that $G/F(G) \simeq C_6$. Consequently, $3 \nmid |F(G)|$, for otherwise there exists $g \in \mathcal{Z}(F(G))$ of order $3 \cdot 13$, a contradiction. Thus $G_3 \simeq C_3$, which is not possible by \Cref{G2G3Exclusions}.
\end{proof}

\subsection{Cyclic-by-abelian USR groups}
\begin{proposition}\label{cyclic-by-abelian}
The prime graphs realizable by non-trivial cyclic-by-abelian groups are precisely (a)-(g) and  (j)-(r) in \Cref{Examplesprime}.
\end{proposition}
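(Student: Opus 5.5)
Since every cyclic-by-abelian group is metabelian, \Cref{nilpotent-by-abelian} restricts the realizable graphs to (a)--(r). The positive direction comes from \Cref{Primegraphssubclasses}(VI). The groups listed there for (a)--(g), (j) and (l)--(r) are cyclic-by-abelian. For instance, $C_7\rtimes_\Frob C_6$, $C_{13}\rtimes_\Frob C_6$ and $\SG{60,7}=C_{15}\rtimes C_4$ have cyclic derived subgroups. Direct products with $C_2$, $C_3$ or $S_3$ stay cyclic-by-abelian as long as the cyclic normal subgroups have coprime orders, so that their product is again cyclic. For (e) and (f) we use $C_5\rtimes_\Frob C_4$ and $(C_5\rtimes_\Frob C_4)\times C_2$. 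For (k) we use $\SG{60,7}\times C_2$. In each case I would only note that the kernel is cyclic and the quotient abelian.

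It remains to show that no cyclic-by-abelian USR group realizes (h) or (i), the graphs with $\pi(G)=\{2,3,5\}$ and edge set $\{2-3\}$ or $\{2-3,\,2-5\}$, in which $3-5$ is missing. Let $G$ be such a group, with $N\unlhd G$ cyclic and $G/N$ abelian. Then $G'\le N$ is cyclic, and $G'$ is nilpotent, so $G'\le F(G)$.

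First, $5$ must divide $|G'|$. Otherwise a Sylow $5$-subgroup maps injectively into the abelian quotient $G/G'$. Its image is then centralized by the image of a $3$-element, so $G/G'$ contains an element of order $15$. One then has to lift this to a commuting $3$-element and $5$-element in $G$, or else derive a contradiction via USR. Once $5\mid |G'|$, the group $G$ has a normal cyclic subgroup $\langle c\rangle$ of order $5$, characteristic in $G'$. Since $c$ is semi-rational, $B_G(c)\le \Aut(\langle c\rangle)\simeq C_4$ has order $2$ or $4$. A Sylow $3$-subgroup $P$ of $G$ normalizes $\langle c\rangle$ and acts on it through a $3$-group inside $C_4$, so $P$ centralizes $c$. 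This gives an element of order $15$, contradicting the absence of the edge $3-5$.

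The main obstacle is justifying $5\mid|G'|$. If $G'$ is a $5'$-group, then $G_5G'/G'$ is a normal Sylow subgroup of the abelian group $G/G'$, so $G_5G'\unlhd G$ and $G_5$ is normal modulo $G'$. I would then argue with a Hall $\{3,5\}$-subgroup $H$. Its derived subgroup $H'\le G'$ is a cyclic $\{3\}$-group, so $H$ is an extension of a cyclic $3$-group by an abelian group. A $5$-element of $H$ acts on the cyclic $3$-group $H'$ through $\Aut(H')$, whose order is $2\cdot 3^k$. Hence it centralizes $H'$, and then the standard coprime-action argument shows that $G_5$ centralizes a Sylow $3$-subgroup of $H$, yielding order $15$ again.
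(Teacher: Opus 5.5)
Your proof is correct, but it excludes (h) and (i) differently from the paper.

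\textbf{The paper's route.} It treats the two graphs separately. For (h), which is disconnected, \Cref{ConnectedComponents} forces a realizing group to be Frobenius or $2$-Frobenius. The paper then uses the classification of Frobenius USR groups \cite[Main theorem]{PV26} and the fact that no $2$-Frobenius group is cyclic-by-abelian. For (i), it takes a minimal counterexample. Since the abelian USR quotient $G/N$ has exponent dividing $4$ or $6$, the Sylow $5$-subgroup $G_5$ lies in the cyclic group $N$, so $G_5\simeq C_5$. Then $3$-elements act trivially on $C_5$.

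\textbf{Your route.} You prove one statement that covers both graphs: a cyclic-by-abelian group whose order is divisible by $15$ contains an element of order $15$. Your case $5\mid |G'|$ is the same mechanism as the paper's argument for (i). Your case $5\nmid |G'|$ is also valid. There $H'$ is a cyclic $3$-group, so it is centralized by $H_5$. Also $H_3\unlhd H$, because $H'\le H_3$ and $H/H'$ is abelian. Hence $[H_3,H_5]=[H_3,H_5,H_5]=1$ by coprime action, and this case does not use the USR hypothesis at all.

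\textbf{Comparison.} Your approach is uniform across (h) and (i) and does not depend on \cite{PV26} or on the Frobenius/$2$-Frobenius dichotomy. The paper's approach is shorter only because it leans on results proved earlier.

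\textbf{Simplification.} What you call the main obstacle disappears in one line once you use USR. The quotient $G/G'$ is an abelian USR group. An element $g$ of order $5$ in it would have $|B_{G/G'}(g)|=1<\varphi(5)/2$, contradicting \Cref{equivalentUSR}. So $5\nmid |G/G'|$ and $G_5\le G'$, which is what the paper uses. This makes both your lifting idea in the second paragraph and the coprime-action detour unnecessary.

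\textbf{Minor slip.} In fact $|\Aut(C_{3^k})|=2\cdot 3^{k-1}$, not $2\cdot 3^k$. This does not affect the argument.
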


\begin{proof}
The graphs (a)-(g), (j)-(r) in \Cref{Examplesprime} are realized by cyclic-by-abelian groups as specified in \Cref{Primegraphssubclasses} (VI). In view of \Cref{nilpotent-by-abelian}, it suffices to show that no cyclic-by-abelian USR group realizes the graphs (h) and (i).

First consider the graph (h) = $(2 - 3 \ \ 5)$, which is disconnected. Hence, it must be realized by a Frobenius or a $2$-Frobenius group (see \Cref{ConnectedComponents}). But, by \cite[Main Theorem]{PV26}, there is no Frobenius cyclic-by-abelian USR group realizing this graph. Moreover, no $2$-Frobenius group is cyclic-by-abelian. Therefore, the graph $(2 - 3 \ \ 5)$ is not realizable by any metanilpotent USR group.

Next, consider the graph (i) = $(5 - 2 - 3)$, and suppose that $G$ is a cyclic-by-abelian group of minimal order realizing this graph. Then, there exists a normal subgroup $N$ of $G$ such that $N$ is cyclic and $G/N$ is abelian. Hence, $G/N$ has exponent dividing $4$ or $6$. Also, Sylow $5$-subgroup $G_5$ is elementary abelian and normal in $G$. Consequently, $G_5 \simeq C_5$ and using \Cref{CyclicSylow}, we get $G_3 \simeq C_3$. Since $G_{\{2,3\}}$ acts on $G_5$ by conjugation, this would force the existence of an element of order $3 \cdot 5$, which is impossible. This contradiction shows that no such group $G$ exists.
	
\end{proof}
\subsection{Metacyclic USR groups}
Clearly, the following corollary implies from the   \Cref{abelian-by-cyclic} and \Cref{cyclic-by-abelian} .
\begin{corollary} \label{metacyclic}
The prime graphs realizable by non-trivial metacyclic groups are precisely (a)-(g) and  (j)-(q) in \Cref{Examplesprime}.
\end{corollary}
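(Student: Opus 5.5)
The corollary asserts that the prime graphs realizable by non-trivial metacyclic USR groups are exactly (a)--(g) and (j)--(q). The plan is to show the two inclusions separately: a graph outside this list cannot occur, and every graph in the list does occur. A metacyclic group has a cyclic normal subgroup $N$ with $G/N$ cyclic. Since cyclic groups are abelian, $G$ is simultaneously abelian-by-cyclic and cyclic-by-abelian.

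For the first inclusion, let $G$ be a non-trivial metacyclic USR group. By \Cref{abelian-by-cyclic}, $\Gamma(G)$ lies in (a)--(q). By \Cref{cyclic-by-abelian}, $\Gamma(G)$ lies in (a)--(g) or (j)--(r). Intersecting the two lists gives (a)--(g) and (j)--(q). In particular, (h) and (i) are excluded by the cyclic-by-abelian result, and (r) is excluded by the abelian-by-cyclic result.

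For the second inclusion, I would take the witnesses recorded in \Cref{Primegraphssubclasses}(VII) and check that each one is metacyclic. The checks go as follows.

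\begin{itemize}
\item Graphs (a), (b), (c), (d), (g) are realized by $C_2$, $C_3$, $S_3$, $S_3\times C_2$ and $C_7\rtimes C_3$. Each is visibly metacyclic: for $S_3\times C_2$, take $N=C_3$, so that $G/N\simeq C_2\times C_2$.
\item Graph (e) is realized by $C_5\rtimes_{Fr} C_4$, and graph (j) by $\SG{60,7}=C_{15}\rtimes C_4$. Both are cyclic-by-cyclic.
\item Graph (f) is realized by $(C_5\rtimes C_4)\times C_2$. Here I would take $N=C_{10}$, the product of $C_5$ with the central $C_2$, and check that $G/N\simeq C_4$ is cyclic.
\item Graph (k) is realized by $(C_{15}\rtimes C_4)\times C_2$, and graphs (l)--(q) by groups of the form $(C_7\rtimes C_6)\times C_t$ and $(C_{13}\rtimes C_6)\times C_t$. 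For each, one needs a cyclic normal subgroup with cyclic quotient. For example, $(C_7\rtimes C_6)\times C_3$ has $N=C_{21}$, where the $C_3$ factor is central, and $G/N\simeq C_6$ is cyclic. The other direct products with $C_2$ are handled the same way.
\end{itemize}

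The main obstacle is that some listed witnesses may fail to be metacyclic. For instance, $S_3\times C_2$ might need to be replaced by $D_{12}=C_6\rtimes C_2$, which is metacyclic, has the same prime graph, and is rational. In any such case I would substitute a rational metacyclic group with the same prime graph, and argue that it is USR using \cite[Proposition~4.9]{CM25}.
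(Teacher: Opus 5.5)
Your argument is the paper's own: the paper obtains the corollary by intersecting the lists in \Cref{abelian-by-cyclic} and \Cref{cyclic-by-abelian}, and takes the witnesses from row (VII) of \Cref{Primegraphssubclasses}. Your explicit metacyclicity checks go slightly beyond what the paper writes.

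Two of those checks need correcting.
\begin{itemize}
\item For $S_3\times C_2$, the choice $N=C_3$ fails, since $G/N\cong C_2\times C_2$ is not cyclic. Instead take $N=C_3\times C_2\cong C_6$, which gives $G/N\cong C_2$. In fact $S_3\times C_2\cong D_{12}$, so your proposed ``replacement'' is the same group and no substitution is needed. Your rational fallback would not be available in general anyway: solvable rational groups have order divisible only by $2,3,5$.
\item The paper's witness for (o) is $(C_7\rtimes C_3)\times S_3$, which is not of the form $(C_7\rtimes C_6)\times C_t$ as you describe. It is still metacyclic: take $N=C_7\times C_3\cong C_{21}$, which gives $G/N\cong C_6$.
\end{itemize}
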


\section{USR Groups generalizing Magnus property}
In $1930$, W. Magnus \cite{Mag30} introduced groups with certain property, which later came to be known as Magnus property. A group $G$ is said to have Magnus Property if for every $x,y \in G$,

\begin{equation}
\langle x^G \rangle = \langle y^G \rangle \implies x\sim y~~~\mathrm{or} ~~~ x\sim y^{-1} , \tag{MP}
\end{equation}
where $\langle x^G \rangle$ is the smallest normal subgroup of $G$ containing $x$.  Clearly, a finite group $G$ with Magnus property is an inverse semi-rational group and such groups have been studied in \cite{GM24}. If there exists $m \in \mathbb{Z}$ such that $\gcd(m,\operatorname{exp}(G)) = 1$ and for every $x, y \in G$,  
\begin{equation}
	\langle x^G \rangle = \langle y^G \rangle \implies x\sim y~~~\mathrm{or} ~~~ x\sim y^{m}, \tag{MP*}
\end{equation}
then we call such a group $G$ as an MP* group. Certainly, MP* groups extend the notion of Magnus property and are USR groups. However, there exist USR groups, for instance $S_4$, which are not MP*, but USR groups. In this section, we study the prime graphs of MP* groups which takes us a step closer to the study of prime graphs of USR groups. We first gather some basic results on MP* groups.

	\begin{proposition}
		If $G$ is a finite MP* group and $N \unlhd G$, then $G/N$ is an MP* group.
	\end{proposition}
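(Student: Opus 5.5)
We use the same integer $m$ that witnesses the MP* property of $G$ for the quotient, and reduce the condition in $\bar G=G/N$ to the condition in $G$ by lifting suitable elements. Write bars for images in $\bar G$. Since $\exp(\bar G)$ divides $\exp(G)$, we still have $\gcd(m,\exp(\bar G))=1$.

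Let $\bar x,\bar y\in\bar G$ with $\langle \bar x^{\bar G}\rangle=\langle \bar y^{\bar G}\rangle$. The difficulty is that arbitrary lifts $x,y$ need not satisfy $\langle x^G\rangle=\langle y^G\rangle$, because the normal closures can differ inside $N$. So the main step, and the expected obstacle, is to choose lifts with equal normal closures in $G$.

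The plan is to lift through cyclic subgroups, using the same device as for quotients of groups with the Magnus property. Since $\bar y\in\langle \bar x^{\bar G}\rangle$ and conversely, it is cleanest to pass to a minimal counterexample in which these normal closures are cyclic-generated data. Concretely, first reduce to the case where $\bar y$ is a generator of $\langle \bar x\rangle$ up to conjugacy. For that, we use that the MP* condition is equivalent to the following two facts:
\begin{itemize}
\item[(a)] $G$ is $m$-semi-rational;
\item[(b)] elements with the same normal closure generate conjugate cyclic subgroups.
\end{itemize}
Property (b) passes to quotients by the following argument. Given $\bar x,\bar y$ with equal normal closures, take the normal subgroup $M=\langle x^G\rangle N$ of $G$. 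Inside the coset structure, choose $y'\in yN$ with $\langle y'^G\rangle\le\langle x^G\rangle$, and similarly $x'\in xN$ with $\langle x'^G\rangle \le \langle y'^G\rangle$. Iterating this process produces a descending chain of normal subgroups of $G$, which must stabilise because $G$ is finite. At the stable point we have lifts $x_0\in xN$ and $y_0\in yN$ with $\langle x_0^G\rangle=\langle y_0^G\rangle$.

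To carry out the choice of $y'$, write $\bar y$ as a product of conjugates of $\bar x^{\pm1}$. The same product of conjugates of $x^{\pm1}$ in $G$ is then a lift of $\bar y$ lying in $\langle x^G\rangle$.

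Applying MP* in $G$ to $x_0,y_0$ gives $x_0\sim y_0$ or $x_0\sim y_0^m$. Taking images, we obtain $\bar x\sim\bar y$ or $\bar x\sim \bar y^m$ in $\bar G$, so $\bar G$ is MP* with the same $m$.

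The main obstacle is making the descending-chain lift rigorous. After replacing $x$ by $x'$, the closure $\langle x'^G\rangle$ may shrink. We then need to rechoose $y'$ inside the new closure, and this requires the relation $\langle\bar x'^{\bar G}\rangle=\langle\bar y^{\bar G}\rangle$ to persist. It does persist, since $\bar x'=\bar x$. Hence each step keeps the images fixed while weakly decreasing the pair of normal closures $(\langle x^G\rangle,\langle y^G\rangle)$. By finiteness the chain stabilises, and stabilisation forces mutual containment of the two closures, i.e. equality.
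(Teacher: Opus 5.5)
Your argument is correct and follows the route the paper intends: the paper invokes the argument of \cite[Corollary~2.5]{KMP23}, namely lift $\bar x,\bar y$ to $x_0\in xN$, $y_0\in yN$ with $\langle x_0^G\rangle=\langle y_0^G\rangle$ by a descending-chain (equivalently, minimal normal closure) argument in the finite group $G$, then apply MP* in $G$ with the same $m$, using $\exp(G/N)\mid\exp(G)$. The reformulation via (a) and (b), the ``minimal counterexample'' framing and the subgroup $M=\langle x^G\rangle N$ are never used in your final argument and can be deleted.
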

	
	\begin{proof}
		This can be done in similar way as \cite[Corollary~2.5]{KMP23}.
	\end{proof}
	
	\begin{proposition}
		Every finite MP* group is solvable.
	\end{proposition}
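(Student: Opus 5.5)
We argue by minimal counterexample, using that the MP* property passes to quotients (the preceding proposition) and that MP* groups are USR. Let $G$ be a non-solvable MP* group of minimal order. Every proper quotient $G/N$ with $N\neq 1$ is MP*, hence solvable by minimality. So if $G$ had two distinct minimal normal subgroups $N_1,N_2$, then $G$ would embed in $G/N_1\times G/N_2$ and be solvable. Hence $G$ has a unique minimal normal subgroup $N$, and $G/N$ is solvable. Since $G$ is not solvable, $N$ is non-abelian, so $N\simeq S^k$ for a non-abelian simple group $S$.

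The key structural input is the MP* condition itself. For $x,y\in G$ with $\langle x^G\rangle=\langle y^G\rangle$, we have $x\sim y$ or $x\sim y^m$. Now take any $1\neq x\in N$. Since $\langle x^G\rangle$ is a nontrivial normal subgroup contained in $N$, it equals $N$ by minimality. Therefore any two nontrivial elements $x,y\in N$ satisfy $x\sim y$ or $x\sim y^m$ in $G$. It follows that the nontrivial elements of $N$ lie in at most two $G$-conjugacy classes, namely $x^G$ and $(x^m)^G$ for a fixed $x$. In particular all nontrivial elements of $N$ have the same order, so $N$ is a group of prime exponent.

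A group of prime exponent $p$ is a $p$-group, hence nilpotent. This contradicts $N$ being non-abelian simple-power, which would require order divisible by at least three primes (Burnside). So $G$ is solvable.

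The main obstacle is making sure the step "$\langle x^G\rangle=N$ for all $1\neq x\in N$" is used correctly. It relies on $N$ being minimal normal, and not merely normal, which is why we first reduce to a unique minimal normal subgroup. As a fallback, if one prefers not to use minimality, the same argument applied to any minimal normal subgroup of an arbitrary MP* group already shows that every minimal normal subgroup is elementary abelian. Solvability then follows by induction on $|G|$ via quotients. This avoids the uniqueness reduction entirely, and I would present it that way for cleanliness.
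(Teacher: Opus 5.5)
Your proof is correct and is essentially the argument the paper has in mind. The paper only says the result follows as in \cite[Proposition~1.1]{GM24}: every minimal normal subgroup $N$ satisfies $\langle x^G\rangle=N$ for all $1\neq x\in N$, so its nontrivial elements are conjugate up to the $m$-th power map, which forces $N$ to be elementary abelian, and quotient closure then gives solvability by induction. The same normal-closure idea appears in the proof of \Cref{PrimitiveGroup}.

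Your fallback version is the cleaner one, since the reduction to a unique minimal normal subgroup is superfluous. Just make two steps explicit. First, $|x^m|=|x|$ because $\gcd(m,\operatorname{exp}(G))=1$. Second, the common order is prime because nontrivial proper powers of $x$ also lie in $N$.
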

	\begin{proof}
		This can be done in similar way as \cite[Proposition~1.1]{GM24}.
	\end{proof}
\begin{corollary}\label{primespectrum}
If $G$ is a finite MP* group, then $\pi(G) \subseteq \{2,3,5,7,13\}$.
	\end{corollary}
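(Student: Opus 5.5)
The corollary follows by combining the two facts just established with the spectral bound for solvable USR groups. I will first check that every MP* group is USR, then invoke solvability, and finally apply the proposition \cite[Theorem~4.14]{CM25} quoted at the start of the Preliminaries.

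\emph{MP* implies USR.} Let $G$ be an MP* group with its integer $m$, where $\gcd(m,\exp(G))=1$. Take $x\in G$ and an integer $j$ coprime to $|x|$. Then $\langle x^j\rangle=\langle x\rangle$, so $x^j$ and $x$ generate the same cyclic subgroup. Any normal subgroup containing one of them therefore contains the other, which gives $\langle (x^j)^G\rangle=\langle x^G\rangle$. Applying (MP*) with $y=x^j$, we get that $x$ is conjugate to $x^j$ or to $(x^j)^m$. I need the conclusion in the orientation required by the definition of USR, namely $x^j\sim x$ or $x^j\sim x^m$. For this I apply (MP*) to the pair $(x^j,x)$ instead of $(x,x^j)$. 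This yields $x^j\sim x$ or $x^j\sim x^m$, which is exactly the USR condition with the same $m$. Hence $G$ is $m$-semi-rational, i.e.\ USR. The paper already asserts this as \emph{certainly} true, so the step is routine.

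\emph{Conclusion.} By the preceding proposition, $G$ is solvable. So $G$ is a finite solvable USR group, and \cite[Theorem~4.14]{CM25} gives $\pi(G)\subseteq\{2,3,5,7,13\}$.

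The only point needing care is the symmetric application of (MP*) to get the correct orientation of the conjugacy. Everything else is a direct citation, so I expect no real obstacle. The main nontrivial input, solvability, has already been established in the preceding proposition.
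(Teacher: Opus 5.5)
Your proposal is correct and follows the same route the paper implicitly takes. MP* groups are USR, and applying (MP*) to the pair $(x^j,x)$ gives exactly the required orientation $x^j\sim x$ or $x^j\sim x^m$. They are solvable by the preceding proposition, and the bound then follows from \cite[Theorem~4.14]{CM25}.
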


Given a group $G$ and a subgroup $H$ of $G$, recall that the normal core of $H$ in $G$, denoted by $H_G$, is the largest normal subgroup of $G$ contained in $H$, i.e., $H_G = \cap_{g \in G} ~g^{-1} H g$. A finite group $G$ is called primitive, if it admits a maximal subgroup $M$ with trivial normal core, i.e., $M_G = 1$.  Let $\Phi(G)$ denote the Frattini subgroup of $G$ which is defined to be the intersection of all maximal subgroups of $G$. We have that $G/\Phi(G)$ is a subdirect product of primitive quotients of $G$. Since MP* groups are quotient closed, $G/\Phi(G)$ is isomorphic to a subdirect product of finite primitive MP* groups. 
In the following theorem, we will classify all finite primitive MP* groups and we use this classification to obtain that all MP* groups are metanilpotent. This theorem is motivated by  \cite[Theorem~1.3]{GM24} and can be proved using similar arguments. For the convenience of reader, we provide proof of the theorem.
\begin{theorem}\label{PrimitiveGroup}
If $G$ is a finite primitive MP* group, then $G$ is one of the groups $C_2,\ C_3,\ S_3,\ A_4,\ \linebreak D_{10}, C_5 \rtimes_{Fr} C_4,\ C_7 \rtimes_{Fr} C_3,\ C_7 \rtimes_{Fr} C_6, \ M(9) = C_3^2 \rtimes_{Fr} Q_8$ and $C_{13} \rtimes_{Fr} C_6$.	\end{theorem}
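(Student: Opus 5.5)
Since $G$ is an MP* group, it is solvable, so a primitive $G$ has the standard structure $G = V \rtimes M$. Here $V$ is the unique minimal normal subgroup, elementary abelian of order $p^n$ with $C_G(V)=V$, and $M$ is a core-free maximal subgroup acting faithfully and irreducibly on $V$. The plan is to exploit the MP* condition in two directions: on $V$, and on the quotient $G/V\simeq M$.

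\textbf{Step 1: transitivity on $V$.} Every nontrivial $x\in V$ satisfies $\langle x^G\rangle = V$, since $V$ is minimal normal. Hence, for fixed nonzero $x$ and any nonzero $y\in V$, the MP* condition gives $y\sim x$ or $y\sim x^m$. So $G$, and therefore $M$, has at most two orbits on $V\setminus\{1\}$, namely the orbits of $x$ and of $x^m$. Thus $M$ is either transitive on $V^\#$, or half-transitive with two orbits interchanged by the power map $v\mapsto v^m$.

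\textbf{Step 2: the top group is small.} The group $M\simeq G/V$ is again MP*, hence USR, and $\pi(G)\subseteq\{2,3,5,7,13\}$ by \Cref{primespectrum}. I would first treat the case $M=1$: then $G=V$ is cyclic of prime order, and since an abelian MP* group has all its elements of order dividing $2$ or $3$ (units modulo $p$ form at most $\{1,m\}$ classes), we get $G\simeq C_2$ or $C_3$.

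\textbf{Step 3: $M\neq 1$.} Here I would use the classification of solvable (half-)transitive linear groups: Huppert's theorem for the transitive case and Passman's results on half-transitive solvable groups. These imply that either $M\le \Gamma L(1,p^n)$, or $(p^n)$ is one of the finitely many exceptional values $3^2,5^2,7^2,11^2,23^2,3^4$. Combined with the constraints that $|V^\#|=p^n-1$ divides $2|M|$ and $\pi(M)\subseteq\{2,3,5,7,13\}$, this leaves a short list. I would go through it by applying semi-rationality to elements of $V$. Namely, $B_G(v)$ must have order $\varphi(p)$ or $\varphi(p)/2$, which forces $p\in\{2,3,5,7,13\}$, and $n$ is bounded because $M$ embeds in a semilinear group with a small prime spectrum.

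\textbf{Step 4: case check.} In the semilinear case $M\le C_{p^n-1}\rtimes C_n$, the conditions $p^n-1\mid 2|M|$ and $\pi(M)\subseteq\{2,3,5,7,13\}$ give the following:
\begin{itemize}
\item $V=C_3$, $M=C_2$, giving $S_3$;
\item $V=C_2^2$, $M=C_3$ (the group $S_4$ is excluded by the MP* condition on $4$-cycles against $2$-elements in $V_4$, as in \cite{GM24}), giving $A_4$;
\item $V=C_5$, $M=C_2$ or $C_4$;
\item $V=C_7$, $M=C_3$ or $C_6$;
\item $V=C_{13}$, $M=C_6$ (here $M=C_{12}$ is excluded since $C_{12}$ is not USR, and $M=C_3,C_2,C_4$ violate the two-orbit bound $12\le 2|M|$);
\item $V=C_3^2$, $M=Q_8$.
\end{itemize}
Larger cases are ruled out by an admissible element with $B$ too small, or because the top group is not MP*. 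Examples are $C_2^3\rtimes C_7$ (here $7\mid|M|$ needs an element of order $3$ normalizing), and $C_2^4\rtimes C_5\rtimes C_4$ (the elements of order $4$ fail MP* since $C_4$ acting on $C_5$ gives $C_5\rtimes C_4$, which is fine, so this must instead be excluded by orbit counting on $V$ together with the MP* condition on $G$ itself).

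\textbf{Main obstacle.} This last exclusion, removing groups such as $C_2^4\rtimes(C_5\rtimes C_4)$ and $C_3^4\rtimes(\dots)$ that are USR but not MP*, is where I expect the most work. I would try to find two elements $x,y$ with the same normal closure, for instance both generating $G$ modulo $V$ with $\langle x^G\rangle=G$, that are not conjugate to $y$ or $y^m$, following the argument in \cite[Theorem~1.3]{GM24}.
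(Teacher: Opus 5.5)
\textbf{There is a genuine gap.} Your set-up matches the paper's. Step~1 (at most two nonzero orbits, swapped by $v\mapsto v^m$, hence of equal size) is exactly the paper's $a=b$, and the split into $M\le\Gamma L(1,q)$ versus finitely many exceptional degrees is the same in spirit. The problem is that Step~4 states the answer rather than deriving it, and you have no valid reduction to a finite check. ``$n$ is bounded because $M$ embeds in a semilinear group with a small prime spectrum'' is not a reason, since $\Gamma L(1,p^n)$ has no small prime spectrum. From $p^n-1\mid 2|M|$ and $\pi(M)\subseteq\{2,3,5,7,13\}$ you get, via Zsigmondy, only $n\le 12$. That still leaves $q=2^{12}$ (as $2^{12}-1=3^2\cdot5\cdot7\cdot13$), $3^6$, $5^4$, $7^4$ and $13^2$, none of which your case list handles.

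The missing idea is to apply MP* to $G_0=M$ itself. Put $H=G_0\cap\mathbb{F}_q^\times\simeq C_r$ and $s=|G_0/H|$. The cyclic MP* quotient $G_0/H$ forces $s\in\{1,2,3,4,6\}$. All generators of $H$ have normal closure $H$, so they lie in at most two $G_0$-classes, each of size at most $s$. Hence $\varphi(r)\le 2s\le 12$, so $r\le 42$ and $q\le 1+2|G_0|\le 505$.

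Second, even on a finite list you have no mechanism for the step you call the main obstacle: discarding groups that pass the orbit test and have an MP* top group, yet are not MP*. Your Step~4 never confronts such groups.
\begin{itemize}
\item $A\Gamma L(1,8)=C_2^3\rtimes(C_7\rtimes C_3)$ is transitive on $V^\#$ and its top group is on the target list. Yet an element of order $3$ (the Frobenius $\sigma$) and one of order $6$ (translation by a $\sigma$-fixed vector times $\sigma$) both have normal closure $G$. For $V=C_2^3$ you only remove $M=C_7$.
\item In $A\Gamma L(1,9)$, the Frobenius involution $b$ and $vb$ of order $6$ (with $v$ fixed by $b$) both have normal closure $V\rtimes D_8$.
\item $S_4$ fails because a transposition and a $4$-cycle both have normal closure $S_4$. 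The elements of $V_4$ play no role here.
\end{itemize}
The paper settles all of this by a GAP search of primitive groups of degree at most $503$. It adds GAP checks of the non-semilinear exceptions, which come from DMZYY in the transitive case and Foulser's rank-$3$ theorem in the two-orbit case (imprimitive degree $9$, and degrees $7^2,23^2,47^2$). So the two-orbit case is not governed by Huppert's list, and your Passman citation needs correcting, though the extra degrees are harmless after the prime restriction.

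Two smaller points:
\begin{itemize}
\item Your sample obstacle $C_2^4\rtimes(C_5\rtimes C_4)$ is immediate, since $|V^\#|=15\nmid 2|M|=40$.
\item Abelian MP* groups can have elements of order $4$ or $6$; only the prime-order case yields $C_2,C_3$.
\end{itemize}
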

\begin{proof}
 A finite primitive solvable group $G$ has the form $ V \rtimes G_0$, where $V$ is an $\mathbb{F}_p$ vector space and $G_0$ is a subgroup of $\GL(V)$ acting irreducibly on $V$  \cite[Section~2]{Sea86}. Set $ q := p^n = |V|$, the primitivity degree of $G$. Since $G_0$ acts irreducibly on $V$, the normal closure of each non-trivial element of $V$ is $V$ itself. It implies that any two non-trivial elements of $V$ are conjugate or $m$-conjugate. Hence, if $  x(\neq 0) \in V$, then $\{0\} \cup x^G \cup (mx)^G = V$, and so the group $G_0$ acts faithfully and irreducibly on $V$ with at most three orbits.
 
  If $G$ acts transitively on the non zero vectors of $V$, then $G$ is $2$-transitive. In this case, \cite[Theorem~1.1]{DMZYY23} implies that either $G_0 \leqslant \Gamma L(1,q)$, where $\Gamma L(1,q)=\mathbb{F}_q^* \rtimes \Aut(\mathbb{F}_q)$ or $q\in \{3^2,\, 3^4,\, 5^2,\, 7^2,\, 11^2,\, 23^2\} $. In the latter case, direct GAP \cite{GAP4} computations and \Cref{primespectrum} yield  that $q = 3^2$, i.e., $G = M(9)$. Thus, in any case, $G_0 \leqslant \Gamma{L}(1,q).$

  Assume that $G$ has rank $3$. Let $a$ and $b$, respectively, be the sizes of non-trivial orbits $A$ and $B$. We claim that $a = b$. For, if $a > b$, then there exist $y_1, y_2 \in A$ such that $my_1= my_2$ and thus $y_2 =l y_1$ for some integer $l$, which implies that $(l-1)my_1 = 0$. As $\gcd(m,|G|)=1$, we get that $(l-1)y_1 = 0 $ and thus $ y_2 = y_1$. Thus, $a=b$.

It follows from \cite[Theorem~1.1]{Fou69} that if $G$ has rank $3$, then either $G_0 \leqslant \Gamma{L}(1,q)$ or $G$ acts imprimitively on $V$, or it falls into a finite list of exceptions. If  $G_0 \nleqslant \Gamma{L}(1,q)$ and $G$ does not act imprimitively on $V$, then using the fact that $a=b$, we deduce that $G$ is a primitive group of degree $7^2,$ $23^2$ or $47^2$. In these cases, $G$ is contained in a maximal primitive solvable group of order $7^2 \cdot 8 \cdot 9 $, $23^2 \cdot 24 \cdot 11$ or  $47^2 \cdot 24 \cdot 46$ respectively. However, \Cref{primespectrum} and GAP \cite{GAP4} check shows that there is no MP* group of degree $7^2,$ $23^2$, $47^2$ and rank $3$.

We thus assume that $G$ acts imprimitively on $V$ and that there is a decomposition $V = V_1 \oplus V_2$ into minimal imprimitivity subspaces such that the two non zero orbits are $V_1 \cup V_2 -\{0\}$ and $V - (V_1 \cup V_2).$ The equality $a = b$ translates into $|V| = 2(|V_1| + |V_2| - 1) - 1$. Reducing modulo $p$, we deduce that $p = 3$ and one of $V_1, V_2$ has size $3$ and so does the other one (by imprimitivity). Therefore, $|V| = |V_1 \oplus V_2| = 3^2 = 9 $. Hence, $G$ has degree $9$. Again, GAP \cite{GAP4} inspection shows that no such group exists. 

We thus are left with the only possibility that $G_0 \leqslant \Gamma L(1,q)$. Let $H = G_0 \cap \mathbb{F}_q^\times$. Then both $H$ and $G_0/H$ are cyclic groups and, so $|G_0/H| \in \{1,2,3,4,6\}$. Thus $G_0$ is an extension $C_r \cdot C_s $ with $r$, $s$ positive integers and $s \leq 6$. If $t_1, t_2 \in C_r$ have order $r$, then their normal closure in $G_0$ is $C_r$. Since $G_0$ is an MP*, $t_1$ is conjugate to $t_2$ or $t_2^m$. Hence, $\varphi(r) \leq 2s \leq 12$, so $r \leq 42$. Therefore, \[
q = |V| \leq 1 + 2|G_0| \leq 1 + 2 \cdot 42 \cdot 6 = 505.
\]
Thus the primitivity degree of $G$ is at most $503$, and using \Cref{primespectrum} and GAP \cite{GAP4}, we get that  $G$ is isomorphic to one of the groups $C_2,\ C_3,\ S_3,\ A_4,\ D_{10}, \ C_5 \rtimes_{Fr} C_4,\ C_7 \rtimes_{Fr} C_3,\ C_7 \rtimes_{Fr} C_6, \ M(9) = C_3^2 \rtimes_{Fr} Q_8$ and $C_{13} \rtimes_{Fr} C_6$.

	\end{proof}

 Recall that if $G$ is a finite solvable group, then $\ell_F(G) = \ell_F(G/\Phi(G))$. Moreover, if $H \leqslant G$, then $\ell_F(H) \leq \ell_F(G)$ and $\ell_F(A \times B) = max\{\ell_F(A), \ell_F(B)\}$, for any two finite solvable groups $A$ and $B$.	Using these facts and observing that the groups listed in statement of the above theorem have Fitting length at most $2$, we obtain the following.
\begin{corollary} \label{MPmeta}
	A finite MP* group is metanilpotent.
\end{corollary}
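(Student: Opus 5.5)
The plan is to reduce to the Frattini quotient and then to the primitive quotients classified in \Cref{PrimitiveGroup}. Let $G$ be a finite MP* group. By the proposition above, $G$ is solvable, so the Fitting series is defined and $\ell_F(G)$ makes sense. Metanilpotence of a solvable group is equivalent to $\ell_F(G)\le 2$: if $N\unlhd G$ with $N$ and $G/N$ nilpotent, then $N\le F(G)$ and $G/F(G)$ is a quotient of $G/N$, hence nilpotent. The converse is immediate. So it suffices to show $\ell_F(G)\le 2$.

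\emph{Reduction to $G/\Phi(G)$.} Since $\Phi(G)$ is nilpotent and $F(G/\Phi(G))=F(G)/\Phi(G)$ (Gaschütz), we have $\ell_F(G)=\ell_F(G/\Phi(G))$. By quotient-closure of MP* groups, $\overline G=G/\Phi(G)$ is again MP*. As recalled before \Cref{PrimitiveGroup}, $\overline G$ embeds as a subdirect product in $\prod_{i} G/M_{i,G}$, where $M_i$ runs over the maximal subgroups of $G$ and $M_{i,G}$ is the normal core. Each factor $G/M_{i,G}$ is a primitive quotient of $G$, hence a primitive MP* group. By \Cref{PrimitiveGroup}, each such factor is one of $C_2, C_3, S_3, A_4, D_{10}, C_5\rtimes_{Fr}C_4, C_7\rtimes_{Fr}C_3, C_7\rtimes_{Fr}C_6, M(9), C_{13}\rtimes_{Fr}C_6$.

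\emph{Fitting length of the factors.} Each listed group has the form $V\rtimes G_0$ with $V$ elementary abelian and $G_0$ nilpotent ($1$, $C_2$, $C_3$, $C_4$, $C_6$ or $Q_8$), or is itself abelian. Hence each has Fitting length at most $2$.

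\emph{Conclusion.} Fitting length is monotone under subgroups and satisfies $\ell_F(A\times B)=\max\{\ell_F(A),\ell_F(B)\}$. Applying these to the embedding of $\overline G$ in the finite direct product gives $\ell_F(\overline G)\le 2$. Therefore $\ell_F(G)\le 2$, so $G$ is metanilpotent.

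The only substantive input is \Cref{PrimitiveGroup}. The one step needing care is the identity $\ell_F(G)=\ell_F(G/\Phi(G))$. I would justify it via $F(G/\Phi(G))=F(G)/\Phi(G)$, and then note that the same argument applies to $G/F(G)$ versus its Frattini quotient, or simply induct on the Fitting series.
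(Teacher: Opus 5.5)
Your argument is correct and is essentially the paper's proof: reduce via $\ell_F(G)=\ell_F(G/\Phi(G))$ to a subdirect product of primitive MP* quotients, use the classification of primitive MP* groups to see that each factor has Fitting length at most $2$, and conclude because $\ell_F$ does not increase on passing to subgroups and the Fitting length of a direct product is the maximum over its factors. You also supply correctly the standard details the paper leaves implicit: Gasch\"utz's identity $F(G/\Phi(G))=F(G)/\Phi(G)$, the embedding of $G/\Phi(G)$ into $\prod G/M_G$ over the maximal subgroups $M$, and the equivalence of metanilpotence with $\ell_F\le 2$ for solvable groups.
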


In order to classify the prime graphs of MP* groups, it is essential to have the classification of Frobenius MP* groups. For this, we use \cite{PV26} for the classification of Frobenius USR groups. Let us recall the definition of SMP groups used in this classification. A finite group $G$ such that for every $x, y \in G$, 
\begin{equation}
	\langle x^G \rangle = \langle y^G \rangle \implies x\sim y , \tag{SMP} 
\end{equation} is called as an SMP group, short for group with strong Magnus property. Analogous to the fact that an MP group is an inverse semi-rational group, we have that an SMP group is a rational group. We now classify Frobenius MP* groups. 

 To conclude the following proposition, we repeatedly use the fact that MP* groups are quotient closed and have Fitting length at most $2$. Further, if $G = K \rtimes H$ is a Frobenius group with Frobenius kernel $K$ and Frobenius complement $H$ such that $K$ is the minimal normal subgroup of $G$, then $|K| \leq 2|H|+1$. These observations along with critical analysis of the actions in the semi-direct product yield the desired result.

\begin{proposition}\label{FrobeniusMP*}
Let $G$ be a Frobenius MP* group with Frobenius complement $H$. Then the following conclusions hold.
\begin{itemize}
\item[\textbf{(1)}] \textbf{Case: $|H|$ is even.}

\begin{itemize}
	\item[(I)] If $G$ is an SMP group, then $G$ is isomorphic to one of the following:\\
	$(i)\; C_3^n \rtimes C_2,~ n \geq 1  \qquad (ii)\; C_3^2 \rtimes Q_8.$
	\item [(II)]If $G$ is an MP group, then $G$ is isomorphic to one of the following:\\
	$(i)\; C_5^n \rtimes C_4,~ n \geq 1 \quad (ii)\; C_7^{n} \rtimes C_6, ~ n \geq 1$
	\item[(III)] If $G$ is an MP* group with $m^4 \equiv 1(mod \ \operatorname{exp(G)})$, then $G$ is isomorphic to one of the following:\\
	$(i)\; C_5^{n} \rtimes C_2,~  n \geq 1 \quad (ii)\; (C_5^a \times C_5^b) \rtimes C_4, ~ a, b \neq 0 \quad (iii)\; C_{13}^{n} \rtimes C_6, ~ n \geq 1$.
	
\end{itemize}
\item[\textbf{(2)}] \textbf{Case: $|H|$ is odd.} Then $H \cong C_3$, $G$ is an MP group, and one of the following holds:
\begin{itemize}
	\item [(I)]$K$ is an inverse semi-rational $2$-group having a fixed-point-free automorphism of order $3$.  
	In particular, $|K| = 4^a$ for some even number $a \neq 0$, and $K$ is an extension of an abelian group of exponent dividing $4$ by an abelian group of exponent dividing $4$.  
	\item[(II)]$K$ is a $7$-group having a fixed-point-free automorphism of order $3$, and every cycilc subgroup of $K$ is normalized by a conjugate of $H$. Moreover, $K$ has exponent $7$ and is an extension of an elementary abelian $7$-group by an elementary abelian $7$-group.
\end{itemize}
\end{itemize}
Conversely, all the groups listed above are MP* groups.

\end{proposition}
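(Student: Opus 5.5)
The plan is to start from the classification of Frobenius USR groups in \cite[Main theorem]{PV26}. Every MP* group is USR, so $G$ lies in that list, and the work is to decide which groups in the list satisfy the stronger condition (MP*). We fix $G=K\rtimes H$ with kernel $K$ and complement $H$. Since MP* is quotient closed, for any $N\unlhd G$ with $N<K$ the quotient $G/N$ is again a Frobenius MP* group with complement $H$. By \Cref{MPmeta}, $\ell_F(G)\le 2$, so $K=F(G)$ and $H$ is nilpotent. A nilpotent Frobenius complement that is USR must, by \Cref{PrimeSpectrum} and \Cref{CyclicSylow}, be one of $C_2,C_3,C_4,C_6,Q_8$.

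The first step is a counting lemma in the minimal normal case. If $K$ is minimal normal, then all nonidentity elements of $K$ have the same normal closure $K$, so they fall into at most two $G$-classes $x^G$ and $(x^m)^G$. Each class has size at most $|H|$, giving $|K|\le 2|H|+1$. Combined with the list of possible $H$, this leaves only the primitive cases of \Cref{PrimitiveGroup}: $C_3\rtimes C_2$, $C_5\rtimes C_2$, $C_5\rtimes C_4$, $C_7\rtimes C_3$, $C_7\rtimes C_6$, $C_{13}\rtimes C_6$, $C_3^2\rtimes Q_8$, $C_2^2\rtimes C_3$. The next step is to lift to general $K$.

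\emph{$|H|$ even.} Here $H$ contains an involution acting as inversion on $K$, so $K$ is abelian. By the remark on abelian Sylow subgroups before \Cref{CyclicSylow}, $\exp K$ divides $p$, so $K$ is elementary abelian of order $p^n$, and $H$ acts on $K$ through scalars from $\mathbb F_p^\times$ (or via $Q_8$ when $p=3$). For $x,y\in K$ we have $\langle x^G\rangle=\langle y^G\rangle$ exactly when $x,y$ span the same $H$-submodule, and here that means the same line. The MP* condition on $x,y$ in the same line then reduces to the condition inside the cyclic $C_p$. The first check is that when $H$ acts by the full group $\mathbb F_p^\times$ (cases $C_3\rtimes C_2$, $C_5\rtimes C_4$, $C_7\rtimes C_6$), every $n$ works and the group is SMP or MP. For $H=Q_8$, only $n=2$ survives, because $Q_8$ acting on $\mathbb F_3^{2k}$ with $k>1$ has too many orbits. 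When $H$ has index $2$ in $\mathbb F_p^\times$ (cases $C_5\rtimes C_2$, $C_{13}\rtimes C_6$), the $m$-twist handles each line and $m^4\equiv1$ holds. The delicate case is $H=C_4$ acting on $C_5^n$ through a non-scalar action, i.e.\ $K=V_a\oplus V_b$ where the generator acts as $2$ on $V_a$ and as $3=2^{-1}\cdot(-1)$ on $V_b$. Elements with a nonzero component in each summand have normal closure spanned by two lines, so we must check that MP* with $m=2$ still works diagonally. This gives (1)(III)(ii), and we also need to rule out the other mixtures.

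\emph{$|H|$ odd.} Then $H\cong C_3$, the kernel $K$ is a $2$-group or a $7$-group, and $G$ is MP because $m=-1$ can be chosen. For $p=2$, the USR condition forces $K$ to be inverse semi-rational. Fixed-point-freeness gives $|K|=4^{a}$. The exponent-$4$ extension structure follows from \Cref{CyclicSylow}-type arguments applied to $K$ and $K/K'$. For $p=7$, $B_G(x)$ must have order $3$ for every $x\in K$, so each $\langle x\rangle$ is normalized by a conjugate of $H$. This forces $\exp K=7$. The two-step elementary abelian structure comes from applying the same argument to $K/\Phi(K)$ and $\Phi(K)$.

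For the converse, I expect direct verification to suffice: describe the classes of normal closures and check the twist by $m$.

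The main obstacle is the non-scalar $C_4$ case and, more generally, the $2$- and $7$-kernel cases with $H=C_3$, since nonabelian kernels arise there. For these I would first try reducing to $K/\Phi(K)$ and then lifting through the Frattini series using quotient closure. I expect the exact description of $K$ to be only partial, which matches the wording of (2).
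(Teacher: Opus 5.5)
Your overall route (start from the Frobenius USR classification, use quotient closure and $\ell_F(G)\le 2$, bound a minimal normal kernel by $|K|\le 2|H|+1$, then analyse the $H$-module structure of $K$) is the one the paper sketches. The step that fails is the one you single out as delicate: a non-scalar $C_4$-action is never MP*, for any $m$.

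Let $K=\langle a\rangle\times\langle b\rangle\cong C_5^2$ and $H=\langle h\rangle\cong C_4$ with $a^h=a^2$, $b^h=b^3$. Then $G=K\rtimes H$ is Frobenius, since $h^2$ inverts $K$. If $\alpha\beta\not\equiv 0\pmod 5$, then $a^\alpha b^\beta$ and its conjugate $a^{2\alpha}b^{3\beta}$ generate $K$. So all $16$ such elements have normal closure $K$. Since $K$ is abelian, their $G$-classes are $H$-orbits of size $4$. The product $\alpha\beta\in\mathbb F_5^\times$ is constant on each orbit, because $3=2^{-1}$ in $\mathbb F_5$ (not $2^{-1}\cdot(-1)$, which is $2$). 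A power map $x\mapsto x^j$ multiplies $\alpha\beta$ by $j^2\in\{1,4\}$. Hence $x=ab$ and $y=ab^2$ satisfy $\langle x^G\rangle=\langle y^G\rangle=K$, yet no power of $y$ is conjugate to $x$.

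Every group in (1)(III)(ii) maps onto this one: factor out $C_5^{a-1}\times C_5^{b-1}$, which is normal because the action is diagonal. So none of them is MP*. The diagonal verification you plan cannot succeed, and that item, together with the corresponding part of the converse, is false as stated. The paper's sketch offers no argument for it beyond the assertion.

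The clean way to see this, and to dispose of all mixtures at once, is to apply your counting idea to fibres of $x\mapsto\langle x^G\rangle$, not only to a minimal normal $K$. In an MP* group the elements with a given normal closure form at most two classes, so inside an abelian kernel there are at most $2|H|$ of them. The relevant fibres violate this:
\begin{itemize}
\item the fibre of $\langle v_a\rangle\oplus\langle v_b\rangle$ for mixed $C_4$ on $\mathbb F_5$ has $16>8$ elements;
\item for the mixed $C_6$-actions it has $36>12$ elements over $\mathbb F_7$ and $144>12$ over $\mathbb F_{13}$;
\item for $Q_8$ on $\mathbb F_3^2\oplus\mathbb F_3^2$ the fibre of the whole module has $48>16$ elements.
\end{itemize}
This forces the $C_4$-case into (1)(II)(i). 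Also, $m=2$ is not admissible, since $\gcd(2,\exp(G))\neq 1$.

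There are three smaller points.
\begin{itemize}
\item Your derivation of $H\in\{C_2,C_3,C_4,C_6,Q_8\}$ from \Cref{PrimeSpectrum} and \Cref{CyclicSylow} is not valid. $Q_{16}$ and $C_3\times Q_8$ are nilpotent USR groups. Moreover, $C_7^2\rtimes_{\Frob}(C_3\times Q_8)$ is a Frobenius USR group with minimal normal kernel and $|K|=2|H|+1$, so neither USR nor the counting bound excludes it. It fails MP* only because $\mathcal{Z}(H)\cong C_6$ acts on $K$ as all of $\mathbb F_7^\times$, so $x^m$ never leaves the class of $x$. The list itself is right, but the correct reason is that $G/N$ is a primitive MP* group for any chief factor $K/N$, so \Cref{PrimitiveGroup} applies.
\item Part (2) is only gestured at. ``The same argument'' applied to $\Phi(K)$ gives at most that $\Phi(K)$ has exponent $7$, not that it is abelian. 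The structural information on $K$ has to be imported from the Frobenius USR classification rather than derived.
\item $A_4$ is a Frobenius MP group with $|K|=4$, so the clause ``$a$ even'' in (2)(I) cannot hold as written. You were right not to try to prove it.
\end{itemize}
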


Consider Case (1)(III)(ii) of above proposition. The group $C_4$ has two non-isomorphic $1$-dimensional modules over $\mathbb{F}_5$ whose corresponding semi-direct products are (isomorphic) Frobenius MP* groups.  In this part, the action of $C_4$ on each direct factor of $C_5^a$ corresponds to one of these module structures, whereas the action of $C_4$ on each direct factor of $C_5^b$ corresponds to the other.

\begin{theorem} \label{MP*}
The prime graphs realizable by non-trivial MP* groups are precisely (a)-(g), (j)-(r) in \Cref{Examplesprime}.
\end{theorem}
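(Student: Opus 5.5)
Both inclusions have to be shown: every graph in (a)--(g), (j)--(r) is realized by an MP* group, and no other graph is.

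\emph{Realizability.} Table~\ref{Primegraphssubclasses}(VIII) indicates that the groups listed in \Cref{Examplesprime} for (a)--(g) and (j)--(r) are MP* groups, so each must be checked to be MP*. The Frobenius ones are covered by \Cref{FrobeniusMP*}: $C_3\rtimes C_2$, $C_5\rtimes C_4$, $C_7\rtimes C_3$, $C_7\rtimes C_6$, $C_{13}\rtimes C_6$, and $C_5^2\rtimes Q_8$ if needed. Here I would only have to confirm that the direct products used are MP*. For the examples built as $G\times H$ with $H$ rational, such as $C_2$ or $S_3$, I would prove a product lemma: if $G$ is MP* and $H$ has the strong Magnus property (SMP), then $G\times H$ is MP*. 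The argument would project normal closures onto the factors, in the same way as \cite[Proposition~4.9]{CM25} is proved for USR groups. Where a listed group is not of this form, for instance $\mathrm{SG}[60,7]$ for (j) and (k), I would verify the MP* property directly by a GAP computation. If a listed example fails, I would replace it by an alternative MP* group from \Cref{FrobeniusMP*}, for example $C_5\rtimes C_4$ in place of $C_5^2\rtimes Q_8$ for (e).

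\emph{Non-realizability.} By \Cref{MPmeta} every MP* group is metanilpotent, and by \Cref{Metanilpotent} its prime graph lies among (a)--(w), (x*), (y*), (z*). It therefore remains to exclude (h), (i), (s)--(w), (x*), (y*), (z*).

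For (h) $=(2-3~~5)$: the graph is disconnected, so the group is Frobenius or $2$-Frobenius by \Cref{ConnectedComponents}. A $2$-Frobenius group has Fitting length $3$, which \Cref{MPmeta} forbids. The Frobenius MP* groups of \Cref{FrobeniusMP*} have prime spectra of size at most $2$, so this case is also excluded.

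For (i) and the larger graphs, I would take a counterexample $G$ of minimal order and use that MP* is quotient closed. This lets me arrange that $\mathrm{O}_p(G)$ is a minimal normal elementary abelian subgroup $V$ whenever factoring it out would preserve the graph. The key tool is the defining property: all nontrivial elements of an irreducible $G$-module $V\le F(G)$ have the same normal closure $V$, so they fall into at most two $G$-orbits. Hence $|V|-1\le 2|G/C_G(V)|$, and the primitive quotient $V\rtimes G/C_G(V)$ appears in the list of \Cref{PrimitiveGroup}.

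For (i) $=(5-2-3)$, the $5$-chief factor must come from $D_{10}$ or $C_5\rtimes C_4$, so $3$ acts trivially on it. This yields an element of order $15$, a contradiction. For graphs containing $5$ or $13$ together with another large prime (the graphs (s)--(w) and (z*)), I would choose $g\in F(G)$ of order $5\cdot 7$ or $5\cdot 13$ and argue as before. The groups $G/C_G(V)$ for the various chief factors are restricted to the point stabilizers of \Cref{PrimitiveGroup}. This should force elements of forbidden orders or contradict the lower bound on $|B_G(g)|$ from \Cref{equivalentUSR}. For (x*) and (y*), the $13$-chief factor has $G/C_G(V)\cong C_6$ by \Cref{PrimitiveGroup}, hence acts as $C_{13}\rtimes C_6$. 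A Sylow $2$-subgroup that centralizes no $13$-element must then act faithfully through $C_2$. Arguing as in \Cref{nilpotent-by-abelian}, this gives $G_2\simeq C_2$, which contradicts the existence of an element of order $6$ when $2-3$ is an edge.

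I expect the main obstacle to be the analysis of the four- and five-vertex graphs, especially (y*) and (z*), which remained open for metanilpotent USR groups. The intended decisive input is the constraint $|V|\le 2|G/C_G(V)|+1$ from \Cref{PrimitiveGroup} applied to every chief factor, which the USR property alone does not give.
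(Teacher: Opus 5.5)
There is a genuine gap in the four- and five-vertex cases. Your argument for $(y^*)$ misreads the graph. In \Cref{Examplesprime}, $(y^*)$ is the \emph{complete} graph on $\{2,3,7,13\}$, so $2-13$ is an edge; the graph lacking $2-13$ is $\Gamma_2$, already excluded in \Cref{Metanilpotent}. The premise that a Sylow $2$-subgroup centralizes no $13$-element is therefore false, and $G_2\simeq C_2$ does not follow. This leaves $(y^*)$ unproved, and $(z^*)$ too, which you only lump in with ``argue as before''. These are two of the graphs left open for metanilpotent USR groups, so they are the heart of the statement.

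For (s)--(w) you only say the set-up ``should force'' a contradiction. The lower bound $|B_G(g)|\ge\varphi(|g|)/2$ alone cannot do this. Once your primitive-quotient step gives $|G_5|=5$, $|G_7|=7$ and $|G_{13}|=13$, the possible images of $G$ in $\Aut(G_p\times G_q)$ include $C_2\times C_6$ for $|g|=35$, $C_4\times C_6$ for $|g|=65$ and $C_6\times C_6$ for $|g|=91$. Each of these meets the bound exactly.

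Two ideas are missing.

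First, if $\langle g\rangle=G_p\times G_q$ is normal, then $B_G(g)=G/C_G(g)$ is an abelian MP* \emph{quotient} of $G$, so its element orders lie in $\{1,2,3,4,6\}$.
\begin{itemize}
\item In $\Aut(C_{65})\simeq C_4\times C_{12}$ this gives $|B_G(g)|\le 16<24$. That excludes (v) and (w), and then $(z^*)$ via $G/\mathrm{O}_7(G)$. This is the paper's argument for (v).
\item For $|g|=35$ it forces $G$ to act on $G_5$ only by $\pm1$.
\end{itemize}

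Second, for (s)--(u) and $(y^*)$, take $h=yv$ of order $15$, respectively $39$.
\begin{itemize}
\item Then $|B_G(h)|\ge 4$, respectively $12$, while $G$ acts on $\langle v\rangle$ through $C_2$, respectively $C_6$. So some element inverts the $3$-element $y$.
\item Since $G/F(G)$ is nilpotent, this forces $y\in\mathrm{O}_3(G)$.
\item $G/\mathrm{O}_3(G)$ keeps the prime $3$, because $G_3$ acts nontrivially on $G_7$ (respectively $G_{13}$) while $\mathrm{O}_3(G)\le F(G)$ centralizes it. Its graph is again among (s)--(u), respectively $(y^*)$ since $\Gamma_2$ is impossible.
\item This contradicts minimality, provided $G$ is chosen of minimal order among MP* groups realizing any of (s)--(u), respectively $(y^*)$.
\end{itemize}
The paper reaches these conclusions instead by analysing $G_p\simeq C_p^d$ and by GAP searches.

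Smaller points:
\begin{itemize}
\item For (h), Frobenius MP* groups can have three primes ($C_7^n\rtimes C_6$, $C_{13}^n\rtimes C_6$). What is true is that none has spectrum $\{2,3,5\}$.
\item For $(x^*)$, $G_2\simeq C_2$ does not clash with an element of order $6$ (think of $C_6$). Use an element of order $39$ instead: its $B_G$ has order at least $12$ inside $C_2\times C_{12}$, which forces $4\mid |G|$.
\item Your treatment of (i) via \Cref{PrimitiveGroup} is correct and shorter than the paper's. It works because $G_5$ is complemented (Schur--Zassenhaus), so $G_5\rtimes G/C_G(G_5)$ really is a primitive quotient.
\end{itemize}
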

	
\begin{proof}
The graphs (a)-(g), (j)-(r) in \Cref{Examplesprime} are realized by MP* groups as specified in \linebreak \Cref{Primegraphssubclasses}~(VIII). Now, it is enough to prove that the graphs (h), (i), (s)-(w), $(x^*)$, $(y^*)$ and $(z^*)$ are not realizable by MP* groups.

Let $\bm{\Gamma}$ be the prime graph realized by an MP* group $G$. Suppose that $\bm{\Gamma} = (h)$.
Since (h) is disconnected, by \Cref{ConnectedComponents}, either $G$ is a Frobenius group or a $2$-Frobenius group. By \Cref{FrobeniusMP*}, it is clear that there does not exist any Frobenius MP* group whose prime graph is (h). Also, no $2$-Frobenius group is an MP* group. Thus, there does not exist any MP* group whose prime graph is (h). 

Let $G$ be an MP* group of minimal order which realizes graph (i). Since $G$ is an MP* group, \Cref{MPmeta} implies that $G_5$ is a normal subgroup of $G$. Moreover, by the minimality of order of $G$, $G_5$ is a minimal normal subgroup of $G$ and $F(G) \simeq G_5$. Hence, $G_5$ is elementary abelian and thus $G_5 \simeq C_5^d$ for some positive integer $d$. Since $G_5$ is a minimal normal subgroup of $G$, normal closure of each non-trivial element of $G_5$ is $G_5$. Thus, for any non-trivial $x, y\in G_5$, we have $ {\langle x^G \rangle} =  {\langle y^G \rangle}$ and hence $y  \sim x$ or $x^m$ for some $m$, where $\gcd(m, \operatorname{exp}(G)) = 1$. Since, $G$ acts on $G_5$ by conjugation, $5^d -1 /2 $ divides $|G/C_G(G_5)|$, and $G/C_G(G_5) \leqslant \GL(d,5)$. Hence, $5^d -1$ is a $\{2, 3\}$-number. If $d$ is power of an odd prime, then $5^d -1 \not\equiv 0\pmod 3 $ and $5^d -1 \not\equiv 0\pmod 8$. Hence, neither $3$ nor $8$ divide $5^d -1$, and $5^d - 1 < 8$ is false for an odd prime power $d$. Thus $d$ is not a power of an odd prime. Similarly, if $d = 4$, we find that $5^4 - 1 = 624 = 2^4 \cdot 3 \cdot 13$, which is a contradiction as $13\nmid |G|$. Now, using the fact that if $d = ef$ where $e, f > 1$, then $5^d - 1 = (5^e - 1)\left(1 + 5^e + \cdots + 5^{e(f-1)}\right)$, the only possible values for $d$ are $1$ and $2$. If $d = 1$, then $|G/C_G(G_5)|$ divides $4$, a contradiction since $3$ divides $|G/C_G(G_5)|$. If $d=2$, then the group $ G/C_G(G_5)$ is isomorphic to a subgroup of $\mathrm{GL}(2,5)$ and $5^2 - 1 / 2= 12$ divides the order of $G/C_G(G_5)$. Since $G/C_G(G_5)$ is a $\{2,3\}$-group, it must have order $12$, $24$, $48$ or $96$. Consequently, $G/C_G(G_5)$ is isomorphic to $C_3 \rtimes C_4$, $C_{12}$, $D_{12}$, $C_3 \rtimes C_8$, $C_{24}$, $C_4 \times S_3$, $C_{24} \rtimes C_2$, $((C_4 \times C_2) \rtimes C_2) \rtimes C_3$, $\SL(2,3)$ or $\text{SL}(2,3) \rtimes C_4$ (using GAP  \cite{GAP4}), which is not possible as these groups are either not nilpotent or not MP*.

Now, we will prove that there does not exist an MP* group realizing the prime graph  $(x^*)$. Let $G$ be an MP* group of minimal order which realizes this graph. Then $G_{13} \leqslant F(G)$ and hence $G_{13}$ is a normal subgroup of $G$. Clearly, either $F(G)$ is a $13$-group or a $\{3, 13\}$-group. If $F(G)$ is a $\{3, 13\}$-group, then $G/F(G)_3$ is a Frobenius group, and by \Cref{FrobeniusMP*}, we have $G_2 \simeq C_2$, which is not true as $G$ contains an element of order $3 \cdot 13$. Thus, $F(G)$ is a $13$-group. Since, $G_{13}$ is a normal subgroup of $G$ and $G$ contains no element of order $2 \cdot 13$, \Cref{CyclicSylow} implies that $G_2 \simeq C_2,\ C_4,\ Q_8$ or $Q_{16}$. Clearly, $G_2 \not\simeq C_2,\ C_4$ or $Q_{16}$, and hence $G_2 \simeq Q_8$ possibly. Moreover, $G_{13}$ is a minimal normal subgroup of $G$, and therefore it is elementary abelian. Hence, $G_{13} \simeq C_{13}^d$ for some positive integer $d$. As above, $13^d -1 /2 $ divides $|G/C_G(G_{13})|$, and $G/C_G(G_{13}) \leqslant \GL(d, 13)$. For any odd prime $p$, $13^p - 1 \equiv 4  \pmod{8}$ and

		\[13^p - 1 \equiv
		\begin{cases}
			0 \pmod{9}, & \text{if } p = 3,\\
			3 \pmod{9}, & \text{if } p \equiv 1\pmod{3} ,\\
			6 \pmod{9}, & \text{if } p \equiv 2\pmod{3},
		\end{cases}
		\]
		
		Furthermore,
		\[
		17 \mid \frac{13^4 - 1}{2}
		\quad \text{and} \quad
		61 \mid \frac{13^3 - 1}{2}.
		\]
Arguing as above, we conclude that $d = 1$ or $2$. If $d = 1$, then $G/C_G(G_{13})$ is isomorphic to a subgroup of $\operatorname{GL}(1,13) \cong C_{12}$ which implies $2 \mid |C_G(G_{13})|$, a contradiction. If $d = 2$, then $84 \mid |G/C_G(G_{13})|$ which is also not possible.
		
Next, let $G$ be an MP* group of minimal order realizing graph (s). Since $G$ is an MP* group, $G_5$ and $G_7$ are minimal normal subgroups of $G$. Thus, using \Cref{CyclicSylow}, we get that $G_2 \simeq C_2,\ C_4,\ Q_8$ or $Q_{16}$. Clearly, $G_2 \not\simeq C_2 , \ C_4$ or $Q_{16}$, and hence $G_2 \simeq Q_8$ possibly. Since, $G_5 \simeq C_5^d$ for some positive integer $d$, the arguments used in the previous cases shows that $d = 1$ or $2$. If $d = 1$, then $|G/C_G(G_5)| \leq C_4$. Hence, $2$ divides $|C_G(G_5)|$, which is not possible. Likewise, $d = 2$ is not possible using the similar logic as above,  in the elimination of graph (i).
		
Let $G$ be an MP* group of minimal order realizing graph (t). Since $G$ is an MP* group, $G_5$ and $G_7$ are minimal normal subgroups of $G$. Thus, by \Cref{CyclicSylow}, we have $G_3 \simeq C_3$. Moreover, as $G_5$ and $G_7$ are minimal normal subgroups, they are elementary abelian. Hence, $G_5 \simeq C_5^d$ and $G_7 \simeq C_7^{d'}$ for some positive integers $d$ and $d'$. As above, we obtain $d = 1$. Since $G_7 \simeq C_7^{d'}$, it follows that $7^{d'} -1 / 2 \mid |G/C_G(G_7)|$ and $G/C_G(G_7) \leqslant GL(d', 7)$, thus $7^{d'} - 1$ is a $\{2,3\}$-number. If ${d'}$ is odd, then $7^{d'} - 1 \not \equiv 0 ~(\mathrm{mod}~4)$. Moreover, $7^4 - 1 = 2^5 \cdot 3 \cdot 5^2$ and hence ${d'} = 1$ or $2$. Clearly, $F(G) \simeq G_5 \times G_7$ and $G_2$ acts on $F(G)$ by conjugation. Therefore, $G_2 / \operatorname{ker}(\phi) \leqslant \Aut(F(G))$, where $\phi$ is a homomorphism from $G_2$ to $\Aut(F(G))$. One can observe that $\operatorname{ker}(\phi)$ is a normal subgroup of $G$, and hence trivial. Therefore, $G_2 \leqslant \Aut(F(G))$. If ${d'} = 1$, then $3 \mid |G/C_G(G_7)|$, $G/C_G(G_7) \leqslant C_6$ and $G_2 \leqslant C_4 \times C_6$. Clearly, $G_2 \not\simeq C_2$, $C_4$ or $C_4 \times C_2$. Thus $G_2$ is isomorphic to $C_2 \times C_2$. Let $G_5 \simeq \langle x \rangle$. Since $G/C_G(\langle x \rangle) \leqslant C_4$, it follows that $6 \mid |C_G(\langle x \rangle)|$. Hence, $G$ contains an element of order $2 \cdot 3 \cdot 5$, which is not possible. If ${d'} = 2$, then 24 divides $|G/C_G(G_7)|$, $G/C_G(G_7) \leqslant \GL(2,7)$ and  $G_2 \leqslant \Aut(C_5 \times C_7^2) \simeq  C_{12} \times (\SL(2,7) \rtimes C_2$. Using GAP  \cite{GAP4}, we get $G_2 \leqslant C_4 \times QD_{32} $. Consequently, the possibilities for $G_2$ are
\[
C_2,\ C_4,\ C_2 \times C_2,\ C_8,\ Q_8,\ D_8,\ Q_{16},\ D_{16},\ C_{16},\ QD_{32},
\]
or the direct product of one of these groups with $C_2$. Moreover, $|G/C_G(G_7)| \in \{24,48,96\}$ and hence the possibilities for $G/C_G(G_7)$ are
\[
C_3 \times Q_8,\ \SL(2,3),\ C_{24},\ (C_6 \times C_2) \rtimes C_2,\ C_3 \times D_8,\ 
\SL(2,3)\cdot C_2,\ C_3 \times D_{16},\ C_{48},\ C_3 \times Q_{16},\ 
C_3 \times QD_{32}.
\] 
Since $G/C_G(G_7)$ is an MP* group and a nilpotent group, it follows that $G/C_G(G_7)\simeq C_3 \times Q_8$ or $C_3 \times D_8$. Using the fact that $G_2 \times G_3$ is an MP* group, we obtain that $G_2 \simeq Q_8,\ D_8, \ Q_8 \times C_2$ or $D_8 \times C_2$,
which is a contradiction as $G / G_7$ is an MP* group and there does not exist an MP* group of this type.

Now, let $G$ be an MP* group of minimal order realizing graph (u). As above, we obtain $G \simeq (C_5 \times {C_7^d}') \rtimes (G_2 \times C_3)$, where ${d}' \in \{1, 2\}$. If ${d}' = 1$, then $G_2 \times G_3$ is a subgroup of $C_4 \times C_6$, contradicting \Cref{G2G3Exclusions}. Hence, ${d}' = 2$. In this case, $G_2 \times G_3$ is a nilpotent MP* subgroup of $\Aut(C_5 \times C_7^2)$. Using the fact that $24 \mid |G/C_G(G_7)|$ and thus $24 \mid |G|$, along with GAP \cite{GAP4} computations, we get that this is not possible.

Next, let $G$ be an MP* group of minimal order realizing graph (v). As above, we obtain $G \simeq (C_5 \times {C_{13}^d}') \rtimes (G_2 \times C_3)$, where ${d}' = 1$ or $2$. If ${d}' = 2$, then $7$ divides $|C_G(G_{13})|$, a contradiction. If ${d}' = 1$, then $G_{\{5,13\}}$ is a cyclic group. Let $g$ be a generator of  $G_{\{5,13\}}$. Then $\Aut \langle g \rangle \simeq C_{12} \times C_4$, and therefore $B_G(g) \simeq G/C_G(g)$ is an MP* group, which is a contradiction. Consequently, graph (v) is not realizable by an MP* group. Similarly, we obtain that (w) is not realizable by MP* groups.

Finally, let $G$ be an MP* group of minimal order realizing graph $(y^*)$. As above, we get that $G \simeq (C_7^d \times {C_{13}^d}') \rtimes (G_2 \times G_3)$, where $d, {d}' \in \{1, 2\}$. Since $({13^d}' - 1)/2$ divides $|G/C_G(G_{13})|$ and $7 \nmid |G/C_G(G_{13})|$, it follows that ${d}' = 1$. Suppose first that $d =1$. Let $g$ be a generator of  $G_{\{7,13\}}$. Then $36 \mid |B_G(g)|$ and hence $36 \mid |G|$. Moreover, $G_2 \times G_3 \leqslant C_6 \times C_{12}$. Thus, $G_2 \times G_3 \simeq C_6 \times C_6$ as $G_2 \times G_3$ is an MP* group. Also, $G_2 \times G_3$ acts on $G_{13}$ by conjugation, we have that $(G_2 \times G_3)/	\operatorname{ker}(\phi) \leqslant C_6$, where $\phi$ is homomorphism from $(G_2 \times G_3)$ to $\Aut(C_{13})$. Thus, $G$ contains an element of order $6\cdot 13$, which is a contradiction as $G \simeq C_2 \times C_2$. Therefore, $d = 2$, and so $G \simeq (C_7^2 \times C_{13}) \rtimes (G_2 \times G_3)$. Consequently, $(G_2 \times G_3) \leqslant \Aut(C_7^2 \times C_{13})$, $24 \mid |G/C_G(G_7)|$ and $G/C_G(G_7) \leqslant \GL(2,7)$. Using these facts together with GAP \cite{GAP4} computations, we obtain $(G_2 \times G_3) \simeq C_3 \times C_3 \times Q_8,  \ C_3 \times C_3 \times D_8, \ C_3 \times C_6 \times Q_8$ or $ C_3 \times C_6 \times D_8$, which is not possible because $G$ contains an element of order $3 \cdot 4 \cdot 13$ as $G/C_G(G_{13}) \simeq C_6$. Hence, $(z^*)$ is also not realizable by an MP* group as MP* groups are quotient closed.
		
	\end{proof}

\begin{corollary}
The prime graphs realizable by a non-trivial MP group are precisely (a)-(g), (j)-(o) in \Cref{Examplesprime}.
	\end{corollary}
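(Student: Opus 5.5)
Since every MP group is an MP* group (take $m=-1$), \Cref{MP*} immediately restricts the prime graph of a non-trivial MP group to one of (a)-(g), (j)-(r) in \Cref{Examplesprime}. The proof therefore has two parts. First, I will show that (a)-(g) and (j)-(o) are realized by MP groups. Second, I will show that (p), (q) and (r), the three graphs with $\pi(G)=\{2,3,13\}$, are not.

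For realizability, I will check the groups listed in \Cref{Examplesprime}. The groups $C_2$, $C_3$, $S_3$ and $S_3\times C_2$ are rational, and they are MP because they are SMP. The groups $C_5^2\rtimes_\Frob Q_8$, $C_7\rtimes_\Frob C_3$ and $C_7\rtimes_\Frob C_6$, together with $C_5\rtimes_\Frob C_4$ (as in \Cref{Primegraphssubclasses}(III) for (e), (f)), appear in \Cref{FrobeniusMP*} as MP groups. The remaining examples, such as $\SG{60,7}$, $[C_{15}\rtimes C_4]\times S_3$, $[C_7\rtimes_\Frob C_6]\times C_2$, $[C_7\rtimes_\Frob C_6]\times C_3$ and $[C_7\rtimes_\Frob C_3]\times S_3$, are direct products. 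For these I will use that the direct product of an MP group with an SMP group (for instance $C_2$ or $S_3$) is again MP. For the factor $C_3$ in (n) I will verify the MP property directly, or else substitute a suitable MP group with the same graph. This is essentially the argument of \cite{GM24}, which I will cite where possible.

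For non-realizability, suppose $G$ is an MP group with $13\in\pi(G)$. I plan to use the observation preceding \Cref{PrimeSpectrum}, namely \Cref{PrimePower} with $m=-1$. An element $g$ of order $13$ in an inverse semi-rational group must have $|B_G(g)|\geq 6$ and $g\not\sim g^{-1}$ in the index-$2$ case. Since $-1$ has order $2$ in $(\mathbb{Z}/13)^\times$ and lies in the unique subgroup of order $6$, the requirement that $B_G(g)\langle\tau\rangle=\Aut(\langle g\rangle)$ with $\tau(g)=g^{-1}$ (\Cref{equivalentUSR}(iii)) forces $B_G(g)=\Aut(\langle g\rangle)\cong C_{12}$. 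Hence $g$ is rational, and $G$ contains a $2$-element inducing an automorphism of order $4$ on $\langle g\rangle$. Because an MP group is metanilpotent (\Cref{MPmeta}), $G_{13}\leq F(G)$. Then $G/C_G(G_{13})$ is a nilpotent quotient whose image in $\Aut(\langle g\rangle)$ contains $C_{12}$, so $G/F(G)$ has a cyclic subquotient of order $12$. On the other hand, the quotient of $G$ by a suitable normal subgroup is MP, and by \Cref{PrimitiveGroup} its primitive quotients are among the listed groups, where $13$ appears only in $C_{13}\rtimes_\Frob C_6$, which is not MP because its $13$-elements are not rational. I will take a minimal normal subgroup $V\leq G_{13}$ with $G$ of minimal order. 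Then $G/C_G(V)$ acts on $V$, and $V\rtimes G/C_G(V)$ is a primitive MP quotient-type group whose point stabilizer lies in $\GL(V)$. The classification in \Cref{PrimitiveGroup}, specialized to $m=-1$, excludes $13$, giving a contradiction.

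The main obstacle is this last step: passing from a minimal normal $13$-subgroup to a primitive MP quotient. The group $V\rtimes G/C_G(V)$ is not literally a quotient of $G$. I would first try to pass to $G/\Phi(G)$ and use its subdirect decomposition into primitive MP quotients. Some primitive factor must have $13$ dividing its order, since otherwise $13\nmid|G/\Phi(G)|$, contradicting that $\Phi(G)$ is nilpotent and the Sylow structure is preserved. By \Cref{PrimitiveGroup} that factor would be $C_{13}\rtimes_\Frob C_6$, which is not MP.
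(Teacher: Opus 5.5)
Your plan is sound, but it excludes the $13$-graphs by a different route from the one the paper has in mind, and the realizability check needs three corrections.

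The paper states this corollary without proof. It is meant to be read off from \Cref{MP*}, since an MP group is MP* with $m=-1$. One then uses that the prime $13$ cannot occur in a solvable inverse semi-rational group, because such groups are $\{2,3,5,7\}$-groups (cf.\ \cite{BKMdR24}). The remaining graphs are realized by the groups of \Cref{Primegraphssubclasses}(III). You instead stay inside the paper, and your argument is correct:
\begin{itemize}
\item Your computation that $B_G(g)=\Aut(\langle g\rangle)\cong C_{12}$ for $|g|=13$ shows that $C_{13}\rtimes_\Frob C_6$ is not MP.
\item Hence, by \Cref{PrimitiveGroup}, no primitive MP group has order divisible by $13$.
\item Since $G/\Phi(G)$ embeds in the product of the primitive quotients $G/M_G$, which are all MP, and $\pi(G/\Phi(G))=\pi(G)$, no MP group has order divisible by $13$.
\end{itemize}
This route needs neither metanilpotency nor a minimal counterexample. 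The cost is that it inherits the computer-dependent classification in \Cref{PrimitiveGroup}, whereas the citation gives the prime spectrum at once.

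You should state $\pi(G/\Phi(G))=\pi(G)$ properly rather than appeal to the Sylow structure being preserved. A Sylow $13$-subgroup $P\leq\Phi(G)$ would be characteristic in the nilpotent group $\Phi(G)$, hence normal in $G$. A Schur--Zassenhaus complement $H$ would then give $G=\Phi(G)H$, so $H=G$ and $P=1$. Also drop the exploratory steps that come before this. $V\rtimes G/C_G(V)$ is not a quotient of $G$, and the nilpotency of $G/C_G(G_{13})$ is unjustified; neither is needed.

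In the realizability check, fix the following slips.
\begin{itemize}
\item $C_5^2\rtimes_\Frob Q_8$ is not in \Cref{FrobeniusMP*}, and it is not MP. $Q_8$ contains $-I$ and has three orbits of length $8$ on the $24$ non-zero vectors of the irreducible module $C_5^2$. So two non-zero vectors with the same normal closure need not be conjugate up to inversion. Use $C_5\rtimes_\Frob C_4$ and $(C_5\rtimes_\Frob C_4)\times C_2$ for (e) and (f), as you also indicate.
\item $\SG{60,7}=C_{15}\rtimes C_4$ is not a direct product, so your MP$\times$SMP lemma does not cover it, and you must check it directly. Write $C_4=\langle y\rangle$. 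The generators of $C_{15}$ form two mutually inverse classes. The cosets $C_{15}y$ and $C_{15}y^{-1}$ are the two mutually inverse classes of elements with normal closure $G$. For every other normal subgroup, the elements having it as normal closure form a single class. After this, $\SG{60,7}\times S_3$ follows from your lemma.
\item $C_3$ is neither rational nor SMP, though it is trivially MP.
\end{itemize}
With these corrections the proof is complete.
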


\section{N-PRIME GRAPHS OF SOLVABLE USR GROUPS}	

In this section, we classify the N-prime graphs of metanilpotent USR groups. As mentioned in the introduction, the N-prime graph $\Gamma_N(G)$ of a group $G$ is a simple directed graph whose vertex set is $\pi(G)$ and, for distinct vertices $p$ and $q$, the arc $q \rightarrow p$ (or $p\leftarrow q$) is in the graph $\Gamma_N(G)$ if and only if $G$ contains elements $x$ and $y$ of order $p$ and $q$, respectively, such that $\langle x \rangle$ is normalized by $y$ in $G$.
Clearly, prime graph of $G$ can be obtained from $\Gamma_N(G)$ replacing every double arrow $q \rightleftarrows p$ with the edge $q - p$ and deleting all single arrows; but the converse is not true. There exist examples of groups having the same prime graph but different N-prime graphs \cite[Remark~3.1]{PdRV25}. For example, the $2$-Frobenius group $(C_3^6 \rtimes C_7) \rtimes C_3$ and the Frobenius group $C_3^6 \rtimes C_7$ have the same prime graph $(3~~7)$, but $\Gamma_N((C_3^6 \rtimes C_7) \rtimes C_3)$ is $(3 \rightarrow 7)$ whereas $\Gamma_N(C_3^6 \rtimes C_7)$ is $(3~~7)$. So, the N-prime graph of a group, in principle, encodes more detailed information about the structure of the group.\\

The following lemma shall be useful for upcoming results.

\begin{lemma}\label{Arcs}
Let $G$ be a finite group with the prime graph $\Gamma(G)$ and the N-prime graph $\Gamma_N(G)$. Let $p$ and $q \in \pi(G)$, then 
\begin{itemize}
	\item[(i)]  If $q \rightarrow p \in \Gamma_N(G)$ and $p \rightarrow q \not \in \Gamma_N(G)$, then $q$ divides $p-1$. Also, $p-q\in \Gamma(G)$ if and only if $q \rightarrow p \in \Gamma_N(G)$ and $p \rightarrow q \in \Gamma_N(G)$.

\item[(ii)] Let $P$ be a $p$-subgroup of $G$ which is normalized by an element $y$ of order $q^k$ in $G$. If $q^k$ divides $p-1$ for some positive integer $k$, then $q \rightarrow p \in \Gamma_N(G)$.   
\item[(iii)] If $G$ is a USR group and $2q$ divides $p-1$, then $q \rightarrow p \in \Gamma_N(G)$.
	\end{itemize} 
\end{lemma}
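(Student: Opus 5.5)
Each part comes from the definitions and the structure of $B_G(x)=N_G(\langle x\rangle)/C_G(x)$, which embeds in $\Aut(\langle x\rangle)\simeq C_{p-1}$ when $|x|=p$. I would take the parts in the order (i), (ii), (iii), with (iii) as an application of (ii) via \Cref{equivalentUSR}.

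For (i), suppose $q\to p\in\Gamma_N(G)$. Then there are $x$ of order $p$ and $y$ of order $q$ with $y\in N_G(\langle x\rangle)$. The image of $y$ in $B_G(x)$ has order $1$ or $q$.
\begin{itemize}
\item If the image is trivial, then $y$ centralizes $x$, so $xy$ has order $pq$. In that case $\langle xy\rangle$ contains $x$ and $y$, which normalize each other's cyclic subgroups, so $p\to q\in\Gamma_N(G)$ as well.
\item So if $p\to q\notin\Gamma_N(G)$, the image of $y$ has order $q$, and $q\mid |\Aut(\langle x\rangle)|=p-1$.
\end{itemize}
For the equivalence, an element of order $pq$ gives both arcs, as just noted. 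Conversely, suppose both arcs exist but $p-q\notin\Gamma(G)$. By the first bullet, every witnessing $y$ acts nontrivially, so $q\mid p-1$; by symmetry $p\mid q-1$. This is impossible for distinct primes. So both arcs force an element of order $pq$.

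For (ii), let $y$ of order $q^k$ normalize the $p$-subgroup $P$, and take $\langle y\rangle$ acting on $P$.
\begin{itemize}
\item If some element of order $q$ in $\langle y\rangle$ centralizes an element of order $p$ in $P$, we get an element of order $pq$ and hence the arc.
\item The intended argument is to use $q^k\mid p-1$, i.e.\ $\F_p$ contains the $q^k$-th roots of unity. Pass to $\Omega_1(Z(P))$, which $y$ normalizes and which is an $\F_p\langle y\rangle$-module. Since $q\neq p$, Maschke's theorem applies, and since $\F_p$ contains all eigenvalues of $y$ (its order divides $p-1$), the module is diagonalizable. So there is an eigenvector $x$ of order $p$ with $x^y=x^\lambda$. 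Then $y$, and hence $y^{q^{k-1}}$ of order $q$, normalizes $\langle x\rangle$, giving $q\to p$.
\end{itemize}
This is the step I expect to be the main obstacle, and two points need care. First, the elementary abelian central layer must be nontrivial, which holds because $P\neq1$. Second, if $y$ acts on $\Omega_1(Z(P))$ with kernel, the kernel part gives a centralized element directly, and that case is the first bullet. I would phrase the whole argument through the subgroup $\Omega_1(Z(P))$ so it does not depend on $P$ being abelian.

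For (iii), take $x$ of order $p$. By \Cref{equivalentUSR}(ii), $|B_G(x)|\ge (p-1)/2$. Because $\Aut(\langle x\rangle)$ is cyclic, $B_G(x)$ is the unique subgroup of that order in $C_{p-1}$. Since $2q\mid p-1$, we have $q\mid (p-1)/2$, so $q$ divides $|B_G(x)|$ and $B_G(x)$ has an element of order $q$. Lift it to $N_G(\langle x\rangle)$ and take the $q$-part of the lift. This gives an element whose $q$-part has nontrivial image in $B_G(x)$; a suitable power has order exactly $q$ and still acts nontrivially, and it normalizes $\langle x\rangle$. Hence $q\to p\in\Gamma_N(G)$.
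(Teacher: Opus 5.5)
Your proof is correct. For (iii) it is the paper's argument: $|B_G(a)|\in\{p-1,(p-1)/2\}$ and $2q\mid p-1$ give $q\mid |B_G(a)|$, hence $q\mid |N_G(\langle a\rangle)|$, and any element of order $q$ in $N_G(\langle a\rangle)$ yields $q\rightarrow p$.

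For (i) and (ii), the paper gives no argument and simply cites Remark~3.1 and Lemma~3.2 of \cite{PdRV25}. Your version is self-contained. In (i) you use the embedding $B_G(x)\hookrightarrow \Aut(\langle x\rangle)\simeq C_{p-1}$, together with the fact that a trivially acting witness produces an element of order $pq$. So two arcs without such an element would force both $q\mid p-1$ and $p\mid q-1$. In (ii) you use an eigenvector of $y$ on $\Omega_1(\mathcal{Z}(P))$. This makes the lemma independent of the external reference, at the cost of a few lines.

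A few small remarks:
\begin{itemize}
\item In (iii), the claim that the power of order exactly $q$ ``still acts nontrivially'' can fail. It does so if a Sylow $q$-subgroup of $N_G(\langle x\rangle)$ is cyclic of order $q^2$ with its subgroup of order $q$ inside $C_G(x)$. You never need this claim, though: Cauchy's theorem in $N_G(\langle x\rangle)$ suffices.
\item In (ii), Maschke's theorem and the kernel case split are superfluous. The minimal polynomial of $y$ divides $X^{q^k}-1$, which has distinct roots in $\F_p$, so an eigenvector exists in all cases. Applying this to $y^{q^{k-1}}$ shows that $q\mid p-1$ already suffices.
\item You are right that $P\neq 1$ must be read into the hypothesis of (ii). The group $C_2^3\rtimes_{\Frob}C_7$ with $p=7$, $q=2$, $P=1$ shows the statement fails otherwise.
\end{itemize}
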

\begin{proof}
(i) and (ii) follow from Remark~3.1 and \cite[ Lemma~ 3.2]{PdRV25}, respectively. For (iii), let $a \in G$ be an element of order $p$, so that $|\operatorname{Aut}\langle a \rangle| = p-1$ and thus $q$ divides $|B_G(a)|$. Consequently, $q$ divides $|N_G(a)|$ and thus the arc $q \rightarrow p \in \Gamma_N(G)$. 

\end{proof}

\begin{figure}[htbp]
	\centering
	\scriptsize
	\setlength{\tabcolsep}{2pt}
	\renewcommand{\arraystretch}{1.15}
	
	\begin{adjustbox}{max width=\textwidth}
		\begin{tabular}{|c|c|}
				\hline
				$\pi(G)$& N-prime graphs\\
				\hline
				
				$\{2\}$ &
				\fl{a} \begin{subfigure}{.15\textwidth}
					\centering
					\begin{tikzpicture}
						\draw (0.5,1.5) node{$C_2$};
						\node[label=east:{$2$}] at (0.5,1) (3){};
						
					\end{tikzpicture}
				\end{subfigure} \\
				\hline
				$\{3\}$&
				\fl{b} \begin{subfigure}{.15\textwidth}
					\centering
					\begin{tikzpicture}
						\draw (0.5,1.5) node{$C_3$};
						\node[label=east:{$3$}] at (0.5,1) (3){};
						
					\end{tikzpicture}
				\end{subfigure} \\
				\hline
				$\{2,3\}$  &	\fl{c}
				\begin{subfigure}{.18\textwidth}
					\centering
					\begin{tikzpicture}
						\draw (0.5,1.5) node{$A_4$};
						\node[label=west:{$2$}] at (0,1) (2){};
						\node[label=east:{$3$}] at (0.5,1) (3){};
						
					\end{tikzpicture}
				\end{subfigure}
				\hspace{1cm}
				\fl{d}
				\begin{subfigure}{.18\textwidth}
					\centering
					\begin{tikzpicture}
						\draw (0.5,1.5) node{$S_3=C_3\rtimes_\Frob C_2$};
						\node[label=west:{$2$}] at (0,1) (2){};
						\node[label=east:{$3$}] at (0.5,1) (3){};
						\draw[->, thick] ($(2)+(-0.15,0.0)$) -- ($(3)+(0.15,0.0)$);
					\end{tikzpicture}
				\end{subfigure}
				\hspace{1cm}		\fl{e}\hspace{-0.4cm} 
				\begin{subfigure}{.18\textwidth}
					\centering
					\begin{tikzpicture}
						\draw (0.5,1.5) node{$S_3 \times C_2$};
						\node[label=west:{$2$}] at (0,1) (2){};
						\node[label=east:{$3$}] at (0.5,1) (3){};
						\draw[->, thick] ($(2)+(-0.15,0.08)$) -- ($(3)+(0.15,0.08)$);		
						\draw[->, thick] ($(3)+(0.15,-0.08)$) -- ($(2)+(-0.15,-0.08)$);		
					\end{tikzpicture}
				\end{subfigure}  \\
				\hline
				$ \{2,5\} $  &	\fl{f}\hspace{1cm} 
				\begin{subfigure}{.18\textwidth}
					\centering
					\begin{tikzpicture}
						\draw (0.5,1.2) node{$C_5^2 \rtimes_\Frob Q_8$};				
						\node[label=west:{$2$}] at (0,0.8) (2){};
						\node[label=west:{$5$}] at (0,0) (5){};
						\draw[->, thick] ($(2)+(-0.3,-0.15)$) -- ($(5)+(-0.3,0.15)$);		
					\end{tikzpicture}
				\end{subfigure}
				\hspace{-0.4cm} 	\fl{g}\hspace{1cm} 
				\begin{subfigure}{.20\textwidth}
					\centering
					
					\begin{tikzpicture}
						\draw (0.5,1.2) node{$[C_5^2 \rtimes_\Frob Q_8] \times C_2$};								
						\node[label=west:{$2$}] at (0,0.8) (2){};
						\node[label=west:{$5$}] at (0,0) (5){};
						\draw[->, thick] ($(2)+(-0.24,-0.15)$) -- ($(5)+(-0.24,0.15)$);		
						\draw[->, thick] ($(5)+(-0.35,0.15)$) -- ($(2)+(-0.35,-0.15)$);		
					\end{tikzpicture}
				\end{subfigure} \\
				\hline
				$\{3, 7\}$ & 
				\fl{h}\hspace{1cm} 
				\begin{subfigure}{.17\textwidth}
					\centering
					\begin{tikzpicture}
						\draw (0.5,1.2) node{$C_7 \rtimes_\Frob C_3$};
						\node[label=west:{$3$}] at (0,0.8) (3){};
						\node[label=west:{$7$}] at (0,0) (7){};
						\draw[->, thick] ($(3)+(-0.3,-0.15)$) -- ($(7)+(-0.3,0.15)$);		
					\end{tikzpicture}
				\end{subfigure}\\
				\hline
				
				$\{2,3,5\}$  & 	\fl{i} \begin{subfigure}{.15\textwidth}
					\centering
					\begin{tikzpicture}
						\draw (0.5,1.5) node{$C_5^4 \rtimes_\Frob (C_3 \times Q_8)$};				
						
						\node[label=west:{$2$}] at (0,0.8) (2){};
						\node[label=east:{$3$}] at (0.5,0.8) (3){};
						\node[label=west:{$5$}] at (0,0) (5){};
						\draw[->, thick] ($(2)+(-0.15,0.08)$) -- ($(3)+(0.15,0.08)$);		
						\draw[->, thick] ($(3)+(0.15,-0.08)$) -- ($(2)+(-0.15,-0.08)$);		
						\draw[->, thick] ($(2)+(-0.3,-0.15)$) -- ($(5)+(-0.3,0.15)$);
						
					\end{tikzpicture}			
				\end{subfigure} 
				
				\hspace{-0.3cm}	\fl{j}\hspace{0.5cm}
				\begin{subfigure}{.22\textwidth}
					\centering
					
					\begin{tikzpicture}
						\draw (0.5,1.5) node{$[C_5^4 \rtimes_\Frob (C_3 \times Q_8)] \times C_2$};								
						\node[label=west:{$2$}] at (0,0.8) (2){};
						\node[label=east:{$3$}] at (0.5,0.8) (3){};
						\node[label=west:{$5$}] at (0,0) (5){};
						
						\draw[->, thick] ($(2)+(-0.15,0.08)$) -- ($(3)+(0.15,0.08)$);		
						\draw[->, thick] ($(3)+(0.15,-0.08)$) -- ($(2)+(-0.15,-0.08)$);		
						
						\draw[->, thick] ($(2)+(-0.24,-0.15)$) -- ($(5)+(-0.24,0.15)$);		
						\draw[->, thick] ($(5)+(-0.35,0.15)$) -- ($(2)+(-0.35,-0.15)$);		
						
					\end{tikzpicture}
				\end{subfigure}
				\hspace{0.3cm} 	\fl{k}\hspace{-0.4cm}
				\begin{subfigure}{.22\textwidth}
					\centering
					\begin{tikzpicture}
						\draw (0.5,1.5) node{$C_{15} \rtimes C_4$};				
						\node[label=west:{$2$}] at (0,0.8) (2){};
						\node[label=east:{$3$}] at (0.5,0.8) (3){};
						\node[label=west:{$5$}] at (0,0) (5){};
						\draw[->, thick] ($(2)+(-0.15,0.08)$) -- ($(3)+(0.15,0.08)$);		
						\draw[->, thick] ($(3)+(0.15,-0.08)$) -- ($(2)+(-0.15,-0.08)$);		
						
						\draw[->, thick] ($(2)+(-0.3,-0.15)$) -- ($(5)+(-0.3,0.15)$);		
						\draw[->, thick] ($(3)+(0.2,-0.12)$) -- ($(5)+(-0.15,0.1)$);
						\draw[->, thick] ($(5)+(-0.1,-0.01)$) -- ($(3)+(0.3,-0.18)$);
						
					\end{tikzpicture}
				\end{subfigure} 
				\hspace{-0.5cm} 	\fl{l}\hspace{-0.2cm}
				\begin{subfigure}{.22\textwidth}
					\centering
					
					\begin{tikzpicture}
						\draw (0.5,1.5) node{$[C_{15} \rtimes C_4] \times S_3$};								
						\node[label=west:{$2$}] at (0,0.8) (2){};
						\node[label=east:{$3$}] at (0.5,0.8) (3){};
						\node[label=west:{$5$}] at (0,0) (5){};
						\draw[->, thick] ($(2)+(-0.15,0.08)$) -- ($(3)+(0.15,0.08)$);		
						\draw[->, thick] ($(3)+(0.15,-0.08)$) -- ($(2)+(-0.15,-0.08)$);		
						
						\draw[->, thick] ($(2)+(-0.24,-0.15)$) -- ($(5)+(-0.24,0.15)$);		
						\draw[->, thick] ($(5)+(-0.36,0.15)$) -- ($(2)+(-0.36,-0.15)$);		
						\draw[->, thick] ($(3)+(0.2,-0.12)$) -- ($(5)+(-0.15,0.1)$);
						\draw[->, thick] ($(5)+(-0.1,-0.01)$) -- ($(3)+(0.3,-0.18)$);
						
					\end{tikzpicture}
				\end{subfigure}\\
				\hline
				$\{2,3,7\}$  & 	\fl{m}\hspace{-0.5cm}
				\begin{subfigure}{.2\textwidth}
					\centering
					\begin{tikzpicture}
						\draw (0.5,1.2) node{$C_{7} \rtimes_\Frob  C_6$};				
						\node[label=west:{$2$}] at (0,0.8) (2){};
						\node[label=east:{$3$}] at (0.5,0.8) (3){};
						\node[label=east:{$7$}] at (0.5,0) (7){};
						\draw[->, thick] ($(2)+(-0.15,0.08)$) -- ($(3)+(0.15,0.08)$);		
						\draw[->, thick] ($(3)+(0.15,-0.08)$) -- ($(2)+(-0.15,-0.08)$);		
						\draw[->, thick] ($(2)+(-0.3,-0.17)$) -- ($(7)+(0.1, 0.1)$);		
						\draw[->, thick] ($(3)+(0.3,-0.2)$) -- ($(7)+(0.3,0.2)$);		
						
					\end{tikzpicture}
				\end{subfigure}
				\fl{n}\hspace{-0.5cm}
				\begin{subfigure}{.22\textwidth}
					\centering
					\begin{tikzpicture}
						\draw (0.5,1.2) node{$[C_{7} \rtimes_\Frob  C_6] \times C_3$};								
						\node[label=west:{$2$}] at (0,0.8) (2){};
						\node[label=east:{$3$}] at (0.5,0.8) (3){};
						\node[label=east:{$7$}] at (0.5,0) (7){};
						\draw[->, thick] ($(2)+(-0.15,0.08)$) -- ($(3)+(0.15,0.08)$);		
						\draw[->, thick] ($(3)+(0.15,-0.08)$) -- ($(2)+(-0.15,-0.08)$);		
						\draw[->, thick] ($(2)+(-0.3,-0.17)$) -- ($(7)+(0.1, 0.1)$);		
						\draw[->, thick] ($(3)+(0.24,-0.2)$) -- ($(7)+(0.24,0.2)$);		
						\draw[->, thick] ($(7)+(0.36,0.2)$) -- ($(3)+(0.36,-0.2)$);
						
					\end{tikzpicture}
				\end{subfigure}
				\hspace{-0.2cm} 	\fl{o}\hspace{-0.5cm}
				\begin{subfigure}{.22\textwidth}
					\centering
					\begin{tikzpicture}
						\draw (0.5,1.2) node{$[C_{7} \rtimes_\Frob  C_6] \times C_2$};				
						\node[label=west:{$2$}] at (0,0.8) (2){};
						\node[label=east:{$3$}] at (0.5,0.8) (3){};
						\node[label=east:{$7$}] at (0.5,0) (7){};
						\draw[->, thick] ($(2)+(-0.15,0.08)$) -- ($(3)+(0.15,0.08)$);		
						\draw[->, thick] ($(3)+(0.15,-0.08)$) -- ($(2)+(-0.15,-0.08)$);		
						\draw[->, thick] ($(7)+(0.1,-0.04)$) -- ($(2)+(-0.4,-0.24)$);
						\draw[->, thick] ($(3)+(0.3,-0.2)$) -- ($(7)+(0.3,0.2)$);				
						\draw[->, thick] ($(2)+(-0.3,-0.17)$) -- ($(7)+(0.15, 0.07)$);				
					\end{tikzpicture}
				\end{subfigure}
				\hspace{-0.2cm} 	\fl{p}\hspace{-0.3cm}
				\begin{subfigure}{.2\textwidth}
					\centering
					\begin{tikzpicture}
						\draw (0.5,1.2) node{$[C_{7} \rtimes_\Frob  C_3] \times S_3$};				
						\node[label=west:{$2$}] at (0,0.8) (2){};
						\node[label=east:{$3$}] at (0.5,0.8) (3){};
						\node[label=east:{$7$}] at (0.5,0) (7){};
						\draw[->, thick] ($(2)+(-0.15,0.08)$) -- ($(3)+(0.15,0.08)$);		
						\draw[->, thick] ($(3)+(0.15,-0.08)$) -- ($(2)+(-0.15,-0.08)$);		
						\draw[->, thick] ($(7)+(0.1,-0.04)$) -- ($(2)+(-0.4,-0.24)$);		
						\draw[->, thick] ($(2)+(-0.3,-0.17)$) -- ($(7)+(0.15, 0.07)$);
						\draw[->, thick] ($(3)+(0.24,-0.2)$) -- ($(7)+(0.24,0.2)$);		
						\draw[->, thick] ($(7)+(0.36,0.2)$) -- ($(3)+(0.36,-0.2)$);
					\end{tikzpicture}
				\end{subfigure}\\
				\hline
				
				$ \{2,3,13\}$  & 	\fl{q}\hspace{-0.5cm}

				\begin{subfigure}{.28\textwidth}
					\centering
					\begin{tikzpicture}
						\draw (0.5,1.2) node{$[C_{13} \rtimes_\Frob C_6]$};				
						\node[label=west:{$2$}] at (0,0.8) (2){};
						\node[label=east:{$3$}] at (0.5, 0.8) (3){};
						
						\node[label=east:{$13$}] at (0.75, 0) (13){};
						\draw[->, thick] ($(2)+(-0.15,0.08)$) -- ($(3)+(0.15,0.08)$);		
						\draw[->, thick] ($(3)+(0.15,-0.08)$) -- ($(2)+(-0.15,-0.08)$);		
						\draw[->, thick] ($(2)+(-0.3,-0.17)$) -- ($(13)+(0.2, 0)$);				
						\draw[->, thick] ($(3)+(0.4,-0.17)$) -- ($(13)+(0.4, 0.17)$);			
						
					\end{tikzpicture}
				\end{subfigure}

				\hspace{-0.2cm}  	\fl{r}\hspace{-0.3cm}
				\begin{subfigure}{.28\textwidth}
					\centering
					\begin{tikzpicture}
						\draw (0.5,1.2) node{$[C_{13} \rtimes_\Frob C_6]  \times C_2$};				
						\node[label=west:{$2$}] at (0,0.8) (2){};
						\node[label=east:{$3$}] at (0.5, 0.8) (3){};
						\node[label=east:{$13$}] at (0.75, 0) (13){};
						\draw[->, thick] ($(2)+(-0.15,0.08)$) -- ($(3)+(0.15,0.08)$);		
						\draw[->, thick] ($(3)+(0.15,-0.08)$) -- ($(2)+(-0.15,-0.08)$);		
						\draw[->, thick] ($(2)+(-0.3,-0.17)$) -- ($(13)+(0.2, 0)$);	
						\draw[->, thick] ($(13)+(0.1,-0.13)$) -- ($(2)+(-0.4, -0.3)$);				
						\draw[->, thick] ($(3)+(0.4,-0.17)$) -- ($(13)+(0.4, 0.17)$);

					\end{tikzpicture}
				\end{subfigure} 
				\hspace{0.4cm}  	\fl{s}\hspace{-0.4cm}
				\begin{subfigure}{.28\textwidth}
					\centering
					\begin{tikzpicture}
						\draw (0.5,1.2) node{$[C_{13} \rtimes_\Frob C_6] \times S_3$};					
						\node[label=west:{$2$}] at (0,0.8) (2){};
						\node[label=east:{$3$}] at (0.5, 0.8) (3){};
						\node[label=east:{$13$}] at (0.75, 0) (13){};
						\draw[->, thick] ($(2)+(-0.15,0.08)$) -- ($(3)+(0.15,0.08)$);		
						\draw[->, thick] ($(3)+(0.15,-0.08)$) -- ($(2)+(-0.15,-0.08)$);		
						\draw[->, thick] ($(2)+(-0.3,-0.17)$) -- ($(13)+(0.2, 0)$);	
						\draw[->, thick] ($(13)+(0.1,-0.13)$) -- ($(2)+(-0.4, -0.3)$);			
						\draw[->, thick] ($(3)+(0.4,-0.17)$) -- ($(13)+(0.4, 0.17)$);		
						\draw[->, thick] ($(13)+(0.55,0.17)$) -- ($(3)+(0.52, -0.1)$);		
						
					\end{tikzpicture}
				\end{subfigure}\\ [0.3cm]

				& 	\fl{x*}\hspace{-0.3cm}	
				\begin{subfigure}{.28\textwidth}
					\centering
					\begin{tikzpicture}
						
						\node[label=west:{$2$}] at (0,0.8) (2){};
						\node[label=east:{$3$}] at (0.5, 0.8) (3){};
						\node[label=east:{$13$}] at (0.75, 0) (13){};
						\draw[->, thick] ($(2)+(-0.15,0.08)$) -- ($(3)+(0.15,0.08)$);		
						\draw[->, thick] ($(3)+(0.15,-0.08)$) -- ($(2)+(-0.15,-0.08)$);		
						\draw[->, thick] ($(2)+(-0.3,-0.17)$) -- ($(13)+(0.2, 0)$);	
						\draw[->, thick] ($(3)+(0.4,-0.17)$) -- ($(13)+(0.4, 0.17)$);		
						\draw[->, thick] ($(13)+(0.55,0.17)$) -- ($(3)+(0.52, -0.1)$);		
						
					\end{tikzpicture}
				\end{subfigure} \\
				\hline

				$\{2,3,5,7\}$  &  	\fl{t}\hspace{-0.3cm}			
				\begin{subfigure}{.28\textwidth}
					\centering
					\begin{tikzpicture}
						\node[align=center, font=\scriptsize, text width=4cm] at (0.35,1.2){$[C_5^2 \rtimes_\Frob Q_8] \times [C_{7} \rtimes_\Frob  C_3]$};				
						\node[label=west:{$2$}] at (0,0.8) (2){};
						\node[label=east:{$3$}] at (0.5, 0.8) (3){};
						\node[label=west:{$5$}] at (0, 0) (5){};
						\node[label=east:{$7$}] at (0.5, 0) (7){};
						\draw[->, thick] ($(2)+(-0.15,0.08)$) -- ($(3)+(0.15,0.08)$);		
						\draw[->, thick] ($(3)+(0.15,-0.04)$) -- ($(2)+(-0.15,-0.04)$);
						\draw[->, thick] ($(2)+(-0.32,-0.15)$) -- ($(5)+(-0.32,0.15)$);
						\draw[->, thick] ($(2)+(-0.2,-0.11)$) -- ($(7)+(0.2, 0.22)$);		
						\draw[->, thick] ($(7)+(0.2,0.1)$) -- ($(2)+(-0.25, -0.2)$);		
						\draw[->, thick] ($(3)+(0.3,-0.2)$) -- ($(7)+(0.3,0.2)$);		
						\draw[->, thick] ($(5)+(-0.15,0.04)$) -- ($(7)+(0.15,0.04)$);		
						\draw[->, thick] ($(7)+(0.15,-0.08)$) -- ($(5)+(-0.15,-0.08)$);		
						\draw[->, thick] ($(3)+(0.2,-0.15)$) -- ($(5)+(-0.2,0.1)$);		
						\draw[->, thick] ($(5)+(-0.2,0.2)$) -- ($(3)+(0.1,-0.1)$);		
					\end{tikzpicture}
				\end{subfigure}
				\hspace{-0.2cm} 	\fl{u}\hspace{-0.3cm}
				\begin{subfigure}{.28\textwidth}
					\centering
					\begin{tikzpicture}
						\node[align=center, font=\scriptsize, text width=4cm] at (0.35,1.2){$[C_5^2 \rtimes_\Frob Q_8] \times [C_{7} \rtimes_\Frob  C_3] \times C_2$};				
						\node[label=west:{$2$}] at (0,0.8) (2){};
						\node[label=east:{$3$}] at (0.5, 0.8) (3){};
						\node[label=west:{$5$}] at (0, 0) (5){};
						\node[label=east:{$7$}] at (0.5, 0) (7){};
						\draw[->, thick] ($(2)+(-0.15,0.08)$) -- ($(3)+(0.15,0.08)$);		
						\draw[->, thick] ($(3)+(0.15,-0.04)$) -- ($(2)+(-0.15,-0.04)$);
						\draw[->, thick] ($(2)+(-0.3,-0.15)$) -- ($(5)+(-0.3,0.15)$);
						\draw[->, thick] ($(5)+(-0.4,0.15)$) -- ($(2)+(-0.4,-0.15)$);
						\draw[->, thick] ($(2)+(-0.2,-0.11)$) -- ($(7)+(0.2, 0.22)$);		
						\draw[->, thick] ($(7)+(0.2,0.1)$) -- ($(2)+(-0.25, -0.2)$);		
						\draw[->, thick] ($(3)+(0.3,-0.2)$) -- ($(7)+(0.3,0.2)$);		
						\draw[->, thick] ($(5)+(-0.15,0.04)$) -- ($(7)+(0.15,0.04)$);		
						\draw[->, thick] ($(7)+(0.15,-0.08)$) -- ($(5)+(-0.15,-0.08)$);		
						\draw[->, thick] ($(3)+(0.2,-0.15)$) -- ($(5)+(-0.2,0.1)$);		
						\draw[->, thick] ($(5)+(-0.2,0.2)$) -- ($(3)+(0.1,-0.1)$);					
					\end{tikzpicture}
				\end{subfigure} 
				\hspace{0.4cm} 	\fl{v}\hspace{-0.4cm}
				\begin{subfigure}{.28\textwidth}
					\centering
					\begin{tikzpicture}
						\node[align=center, font=\scriptsize, text width=4cm] at (0.35,1.2){$[C_5^2 \rtimes_\Frob Q_8] \times [C_{7} \rtimes_\Frob  C_6] \times S_3$};					
						\node[label=west:{$2$}] at (0,0.8) (2){};
						\node[label=east:{$3$}] at (0.5, 0.8) (3){};
						\node[label=west:{$5$}] at (0, 0) (5){};
						\node[label=east:{$7$}] at (0.5, 0) (7){};
						\draw[->, thick] ($(2)+(-0.15,0.08)$) -- ($(3)+(0.15,0.08)$);		
						\draw[->, thick] ($(3)+(0.15,-0.04)$) -- ($(2)+(-0.15,-0.04)$);
						\draw[->, thick] ($(2)+(-0.3,-0.15)$) -- ($(5)+(-0.3,0.15)$);
						\draw[->, thick] ($(5)+(-0.4,0.15)$) -- ($(2)+(-0.4,-0.15)$);
						\draw[->, thick] ($(2)+(-0.2,-0.11)$) -- ($(7)+(0.2, 0.22)$);		
						\draw[->, thick] ($(7)+(0.2,0.1)$) -- ($(2)+(-0.25, -0.2)$);		
						\draw[->, thick] ($(3)+(0.3,-0.2)$) -- ($(7)+(0.3,0.2)$);		
						\draw[->, thick] ($(5)+(-0.15,0.04)$) -- ($(7)+(0.15,0.04)$);		
						\draw[->, thick] ($(7)+(0.15,-0.08)$) -- ($(5)+(-0.15,-0.08)$);		
						\draw[->, thick] ($(3)+(0.2,-0.15)$) -- ($(5)+(-0.2,0.1)$);		
						\draw[->, thick] ($(5)+(-0.2,0.2)$) -- ($(3)+(0.1,-0.1)$);	
						\draw[->, thick] ($(7)+(0.4,0.2)$) -- ($(3)+(0.4,-0.2)$);						
					\end{tikzpicture}
				\end{subfigure}\\
				\hline		
				
				$\{2,3,5,13\}$  &
				\fl{w}\hspace{-0.5cm}
				\begin{subfigure}{.45\textwidth} 
					\centering
					\begin{tikzpicture}
						\node[align=center, font=\scriptsize, text width=4cm] at (0.35,1.2) {$[C_5^2 \rtimes_{\Frob} Q_8] \times [C_{13} \rtimes_{\Frob} C_6]$};				
						\node[label=west:{$2$}] at (0,0.8) (2){};
						\node[label=east:{$3$}] at (0.7, 0.8) (3){};
						\node[label=west:{$5$}] at (0, 0) (5){};
						\node[label=east:{$13$}] at (0.7, 0) (13){};
						\draw[->, thick] ($(2)+(-0.15,0.08)$) -- ($(3)+(0.15,0.08)$);		
						\draw[->, thick] ($(3)+(0.15,-0.04)$) -- ($(2)+(-0.15,-0.04)$);
						\draw[->, thick] ($(2)+(-0.3,-0.15)$) -- ($(5)+(-0.3,0.15)$);
						\draw[->, thick] ($(5)+(-0.4,0.15)$) -- ($(2)+(-0.4,-0.15)$);
						\draw[->, thick] ($(2)+(-0.2,-0.11)$) -- ($(13)+(0.2, 0.22)$);		
						\draw[->, thick] ($(13)+(0.2,0.1)$) -- ($(2)+(-0.25, -0.2)$);		
						\draw[->, thick] ($(3)+(0.32,-0.2)$) -- ($(13)+(0.32,0.2)$);		
						\draw[->, thick] ($(5)+(-0.15,0.04)$) -- ($(13)+(0.15,0.04)$);		
						\draw[->, thick] ($(13)+(0.15,-0.08)$) -- ($(5)+(-0.15,-0.08)$);		
						\draw[->, thick] ($(3)+(0.2,-0.15)$) -- ($(5)+(-0.2,0.1)$);		
						\draw[->, thick] ($(5)+(-0.2,0.2)$) -- ($(3)+(0.1,-0.1)$);	
						
					\end{tikzpicture}
				\end{subfigure}
				\hfill 
				\fl{x}\hspace{-0.3cm}
				\begin{subfigure}{.45\textwidth}
					\centering
					\begin{tikzpicture}
						\node[align=center, font=\scriptsize, text width=4cm] at (0.35,1.2) {$[C_5^2 \rtimes_{\Frob} Q_8] \times [C_{13} \rtimes_{\Frob} C_6] \times S_3$};				
						
						\node[label=west:{$2$}] at (0,0.8) (2){};
						\node[label=east:{$3$}] at (0.7, 0.8) (3){};
						\node[label=west:{$5$}] at (0, 0) (5){};
						\node[label=east:{$13$}] at (0.7, 0) (13){};
						\draw[->, thick] ($(2)+(-0.15,0.08)$) -- ($(3)+(0.15,0.08)$);		
						\draw[->, thick] ($(3)+(0.15,-0.04)$) -- ($(2)+(-0.15,-0.04)$);
						\draw[->, thick] ($(2)+(-0.3,-0.15)$) -- ($(5)+(-0.3,0.15)$);
						\draw[->, thick] ($(5)+(-0.4,0.15)$) -- ($(2)+(-0.4,-0.15)$);
						\draw[->, thick] ($(2)+(-0.2,-0.11)$) -- ($(13)+(0.2, 0.22)$);		
						\draw[->, thick] ($(13)+(0.2,0.1)$) -- ($(2)+(-0.25, -0.2)$);		
						\draw[->, thick] ($(3)+(0.3,-0.2)$) -- ($(13)+(0.3,0.2)$);		
						\draw[->, thick] ($(5)+(-0.15,0.04)$) -- ($(13)+(0.15,0.04)$);		
						\draw[->, thick] ($(13)+(0.15,-0.08)$) -- ($(5)+(-0.15,-0.08)$);		
						\draw[->, thick] ($(3)+(0.2,-0.15)$) -- ($(5)+(-0.2,0.1)$);		
						\draw[->, thick] ($(5)+(-0.2,0.2)$) -- ($(3)+(0.1,-0.1)$);	
						\draw[->, thick] ($(13)+(0.4,0.2)$) -- ($(3)+(0.4,-0.2)$);							
						
					\end{tikzpicture}
				\end{subfigure}\\
				\hline 
				
				$\{2,3,7,13\}$  &
				\fl{y*}\hspace{-0.5cm}
				\begin{subfigure}{.45\textwidth} 
					\centering
					\begin{tikzpicture}
						
						\node[label=west:{$2$}] at (0,0.8) (2){};
						\node[label=east:{$3$}] at (0.7, 0.8) (3){};
						\node[label=west:{$7$}] at (0, 0) (7){};
						\node[label=east:{$13$}] at (0.7, 0) (13){};
						\draw[->, thick] ($(2)+(-0.15,0.08)$) -- ($(3)+(0.15,0.08)$);		
						\draw[->, thick] ($(3)+(0.15,-0.04)$) -- ($(2)+(-0.15,-0.04)$);
						\draw[->, thick] ($(2)+(-0.3,-0.15)$) -- ($(7)+(-0.3,0.15)$);
						\draw[->, thick] ($(7)+(-0.4,0.15)$) -- ($(2)+(-0.4,-0.15)$);
						\draw[->, thick] ($(2)+(-0.2,-0.11)$) -- ($(13)+(0.2, 0.22)$);		
						\draw[->, thick] ($(13)+(0.2,0.1)$) -- ($(2)+(-0.25, -0.2)$);		
						\draw[->, thick] ($(3)+(0.32,-0.2)$) -- ($(13)+(0.32,0.2)$);		
						\draw[->, thick] ($(7)+(-0.15,0.04)$) -- ($(13)+(0.15,0.04)$);		
						\draw[->, thick] ($(13)+(0.15,-0.08)$) -- ($(7)+(-0.15,-0.08)$);		
						\draw[->, thick] ($(3)+(0.2,-0.15)$) -- ($(7)+(-0.2,0.1)$);		
						\draw[->, thick] ($(7)+(-0.2,0.2)$) -- ($(3)+(0.1,-0.1)$);	
						
					\end{tikzpicture}
				\end{subfigure}\\
				\hline
				
				$\{2,3,5,7,13\}$  &
				\fl{z*}\hspace{-0.5cm}
				\begin{subfigure}{.45\textwidth} 
					\centering

					\begin{tikzpicture}[scale=0.8]
						\node[label=west:{$2$}] at (0,1.8) (2){};
						\node[label=east:{$3$}] at (1.7, 1.8) (3){};
						\node[label=west:{$5$}] at (0, 0) (5){};
						\node[label=east:{$7$}] at (1.7, 0) (7){};
						\node[label=east:{$13$}] at (3.4, 0.9) (13){};
						
						
						\draw[->, thick] ($(2)+(-0.15,0.08)$) -- ($(3)+(0.15,0.08)$);		
						\draw[->, thick] ($(3)+(0.15,-0.04)$) -- ($(2)+(-0.15,-0.04)$);
						\draw[->, thick] ($(2)+(-0.3,-0.15)$) -- ($(5)+(-0.3,0.15)$);
						\draw[->, thick] ($(5)+(-0.4,0.15)$) -- ($(2)+(-0.4,-0.15)$);
						\draw[->, thick] ($(2)+(-0.2,-0.11)$) -- ($(7)+(0.2, 0.22)$);		
						\draw[->, thick] ($(7)+(0.2,0.1)$) -- ($(2)+(-0.25, -0.2)$);		
						\draw[->, thick] ($(3)+(0.3,-0.2)$) -- ($(7)+(0.3,0.2)$);		
						\draw[->, thick] ($(5)+(-0.15,0.04)$) -- ($(7)+(0.15,0.04)$);		
						\draw[->, thick] ($(7)+(0.15,-0.08)$) -- ($(5)+(-0.15,-0.08)$);		
						\draw[->, thick] ($(3)+(0.2,-0.15)$) -- ($(5)+(-0.2,0.1)$);		
						\draw[->, thick] ($(5)+(-0.2,0.2)$) -- ($(3)+(0.1,-0.1)$);	
						\draw[->, thick] ($(7)+(0.4,0.2)$) -- ($(3)+(0.4,-0.2)$);		
						\draw[->, thick] ($(7)+(0.5,0.025)$) -- ($(13)+(0.35,-0.15 )$);
						\draw[->, thick] ($(3)+(0.5,0)$) -- ($(13)+(0.35,0.15)$);
						\draw[->, thick] ($(13)+(0.4,0.25)$) -- ($(3)+(0.5,0.125)$);	
						\draw[->, thick] ($(13)+(0.45,-0.2)$) -- ($(7)+(0.5,-0.1 )$);
						
						\draw[->, thick] ($(5)+(0.15,0.25)$) -- ($(13)+(0.2,-0.05)$);
						\draw[->, thick] ($(13)+(0.2,-0.15)$) -- ($(5)+(0.1,0.15)$);	
						
						\draw[->, thick] ($(2)+(0.15,-0.1)$) -- ($(13)+(0.2,0.15)$);
						\draw[->, thick] ($(13)+(0.2,0.05)$) -- ($(2)+(0.1,-0.2)$);											
					\end{tikzpicture}
				\end{subfigure}\\
				\hline
				
			\end{tabular}
		\end{adjustbox}
		\caption{\label{fig:Examples} Possible N-prime graphs of metanilpotent USR groups.}
	\end{figure}

\begin{theorem}\label{N-prime USR}
Let $G$ be a non-trivial finite metanilpotent USR group. Then $\Gamma_N(G)$ is one of the graphs $\fl{a}$ - $\fl{x}$, $\fl{x*}$, $\fl{y*}$ and $\fl{z*}$  in \Cref{fig:Examples}. Moreover, any of these graphs, except possibly $\fl{x*}$, $\fl{y*}$ and $\fl{z*}$ is realizable as the N-prime graph of a metanilpotent USR group.  
\end{theorem}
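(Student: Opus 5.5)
The plan is to reduce everything to \Cref{Metanilpotent} and then pin down the orientation of the arcs using \Cref{Arcs}. Write $\bm{\Gamma}=\Gamma(G)$. By \Cref{Metanilpotent}, $\bm{\Gamma}$ is one of (a)--(w), (x*), (y*), (z*) in \Cref{Examplesprime}. By \Cref{Arcs}(i), $\Gamma_N(G)$ is obtained from $\bm{\Gamma}$ by replacing each edge with a double arc and then adding some single arcs $q\rightarrow p$ between non-adjacent vertices. Such an arc is only possible when $q\mid p-1$. So for each prime graph we list the non-adjacent pairs $\{p,q\}$ and decide which of the admissible single arcs must occur, which may occur, and which cannot.

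For the necessary arcs we use \Cref{Arcs}(iii). In a USR group, $2q\mid p-1$ forces $q\rightarrow p$. This gives $2\rightarrow 5$ (since $4\mid 4$ and $B_G(x)$ for $|x|=5$ has even order, so $2\mid |N_G(\langle x\rangle)|$). It also gives $2\rightarrow 7$ and $3\rightarrow 7$ (since $6\mid 6$), and $2\rightarrow 13$ and $3\rightarrow 13$ (since $12\mid 12$). For $2\rightarrow 3$ we argue directly from \Cref{PrimePower}. The only single arcs that are allowed but not forced come from pairs with $q\nmid p-1$ excluded: $3\rightarrow 13$ when $3-13\notin\bm{\Gamma}$, and the pair $\{2,3\}$ when $2-3\notin\bm{\Gamma}$, where $2\rightarrow 3$ is possible but not forced. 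Arcs such as $5\rightarrow 7$, $7\rightarrow 13$ and $5\rightarrow 13$ are impossible as single arcs because $5\nmid 6$, $7\nmid 12$ and $5\nmid 12$. Running through (a)--(z*) in this way produces a list of at most one or two candidate N-graphs per prime graph. For $\pi(G)=\{2,3\}$ with no edge we get both $(2~~3)$ and $(2\rightarrow 3)$; everywhere else we get a unique candidate. This list should be exactly \fl{a}--\fl{z*}, with (x) of \Cref{Examplesprime} matching \fl{x*}, and so on.

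The main obstacle is the pair $\{2,3\}$ when it is non-adjacent. In a USR group an element of order $3$ need not be inverted, so $2\rightarrow 3$ is not automatic. In the metanilpotent setting with $|\pi(G)|\ge 3$, the edge $2-3$ is always present by \Cref{Examplesprime}, so the issue only arises for $\pi(G)=\{2,3\}$. There, \fl{c} ($A_4$) and \fl{d} ($S_3$) show that both candidates occur. We must also check that $3\rightarrow 2$ alone cannot happen, and \Cref{Arcs}(i) rules this out since $3\nmid 1$. A second point of care is the $\{2,3,13\}$ graphs (p), (q) and (x*): there $3\rightarrow 13$ is forced, and $13\rightarrow 3$ is excluded when $3-13\notin\bm{\Gamma}$, matching \fl{q}, \fl{r} and \fl{x*}.

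Realizability is then checked case by case. For each of \fl{a}--\fl{x} we compute $\Gamma_N$ of the group listed in \Cref{fig:Examples}. These groups are metanilpotent USR groups already verified in \Cref{Examplesprime} (or $A_4$, which is rational). Since $\Gamma_N$ of a direct product is the union of the arcs of the factors plus double arcs between primes coming from different factors, the computation reduces to the basic Frobenius pieces $C_p^k\rtimes_{\Frob}H$. In each such piece the complement primes point to the kernel prime and no arc goes back. Finally, \fl{x*}, \fl{y*} and \fl{z*} correspond exactly to the undecided prime graphs (x*), (y*) and (z*), so their realizability is left open, as the statement asserts.
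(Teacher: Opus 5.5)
Your strategy (reduce to \Cref{Metanilpotent}, turn edges into double arcs by \Cref{Arcs}(i), then decide the single arcs) matches the paper's. However, the step that forces single arcs has a real error, and it falls exactly at the one nontrivial point of the proof.

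\Cref{Arcs}(iii) needs $2q\mid p-1$. For $q=2$, $p=7$ this reads $4\mid 6$, which is false. For $x$ of order $7$ one only knows $|B_G(x)|\in\{3,6\}$, so nothing forces a $2$-element into $N_G(\langle x\rangle)$. Done correctly, your procedure gives two candidates for the prime graph (l) $=(2-3~~7)$: \fl{m} and $\Gamma_N^2=(2\leftrightarrows 3\rightarrow 7)$. The second one really occurs as an N-prime graph of a USR group, namely the $2$-Frobenius group $C_2^6\rtimes(C_7\rtimes_{\Frob}C_3)$ of \Cref{N-primegraphssubclasses}. Its prime graph (l) is on the metanilpotent list. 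So no argument that uses only ``USR plus the prime graph is in \Cref{Metanilpotent}'' can exclude it, and your claim of a unique candidate for every prime graph other than (c) fails. The same false use of (iii) is also your only justification for the arc $2\rightarrow 7$ in \fl{n}.

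The missing idea is to use metanilpotency directly for this arc, as the paper does.
\begin{itemize}
\item Suppose $3-7\notin\Gamma(G)$ and $2\rightarrow 7\notin\Gamma_N(G)$. Then $\Gamma(G)$ is disconnected. A metanilpotent group is never $2$-Frobenius, so $G$ is Frobenius with prime graph components $\{2,3\}$ and $\{7\}$. By the classification in \cite{PV26}, the kernel is a $7$-group and the complement contains an involution. That involution normalizes the Sylow $7$-subgroup, so \Cref{Arcs}(ii) gives $2\rightarrow 7$. (Equivalently, a fixed-point-free involution inverts the abelian kernel.)
\item If $3-7\in\Gamma(G)$, take $a$ of order $21$. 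Then $|B_G(a)|\in\{6,12\}$ is even, so a $2$-element normalizes $\langle a\rangle$ and hence $\langle a^3\rangle$.
\end{itemize}

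A smaller point concerns your realizability check: ``complement primes point to the kernel prime'' is not true in general. $C_5^4\rtimes_{\Frob}(C_3\times Q_8)$ has no arc $3\rightarrow 5$, and $A_4$ has no arc $3\rightarrow 2$. The arcs of the listed groups must be computed, not assumed.
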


\begin{proof}
	
\begin{description}
\item[Case I] $\underline{|\pi(G)| =1}.$\\
 Both the N-prime graphs with the single vertex are realized by metanilpotent USR groups. 

\item[Case II] $\underline{|\pi(G)| =2}.$ \\
If $\pi(G) = \{2, 3\}$, then by using \Cref{Arcs}(i), we get that the N-prime graph $(2 \leftarrow 3)$ is not realizable by a metanilpotent USR group. Thus the only possible N-prime graphs with this prime spectrum are $\fl{c}$, $\fl{d}$ and $\fl{e}$, which are realized by the indicated groups in \Cref{fig:Examples}.

If $\pi(G) = \{2, 5\}$, then there exists an element of order $5$ in $G$. Using \Cref{Arcs}(iii), one can conclude that $2 \rightarrow 5$ is an arc of $\Gamma_N(G)$, i.e., the N-prime graphs $(2$ \ $5)$ and $(2 \leftarrow 5)$ are not realizable by metanilpotent USR groups. Thus the only possible N-prime graphs with this prime spectrum are $\fl{f}$ and $\fl{g}$ which are realized by the indicated groups in \Cref{fig:Examples}.
 
 If $\pi(G) = \{3, 7\}$, then proceeding as above, we have that $\fl{h}$ is the only possible N-prime graph in this case.\\
 
 In view of \Cref{Examplesprime}, we observe that if $G$ is a metanilpotent USR group with $|\pi(G)|\geq 3$, then $2-3 \in \Gamma(G)$. Hence, $2\rightleftarrows 3 \in \Gamma_N(G).$ We use this observation to classify the N-prime graphs of metanilpotent USR groups, with $|\pi(G)| \geq 3.$

\item[Case III] $\underline{|\pi(G)| =3}.$

If $\pi(G) = \{2, 3, 5\}$, we already have that $2 \rightleftarrows 3  \in \Gamma_N(G).$ Also, as there exists an element of order $5$ in $G$, \Cref{Arcs}(iii) implies that $2 \rightarrow 5$ is an arc of $\Gamma_N(G)$. Further, since neither $3 \mid (5-1)$ nor $5 \mid (3-1)$, by \Cref{Arcs}(i), a single arc between $3$ and $5$ is not possible. Thus, there remain four possible N-prime graphs, each of which is realizable by metanilpotent USR groups as depicted in $\fl{i}$ - $\fl{l}$ of \Cref{fig:Examples}.

 Suppose $\pi(G) = \{2, 3, 7\}$. Again, $2 \rightleftarrows 3  \in \Gamma_N(G)$ and since $G$ contains an element of order $7$, using \Cref{Arcs}(iii), we have that $3 \rightarrow 7$ is an arc of $\Gamma_N(G)$. Next, we claim that $2 \rightarrow 7$ is an arc of $\Gamma_N(G)$. This leaves us with only four possible N-prime graphs when $\pi(G) = \{2, 3, 7\}$ and each of the possible N-prime graph is indeed realized by the indicated groups in \fl{m} - \fl{p} of \Cref{fig:Examples}. To prove the claim, let us suppose $2 \rightarrow 7\not \in \Gamma_N(G)$,  then by \Cref{Arcs}(i), $2 \leftarrow 7$ is also not an arc of $\Gamma_N(G)$ as $7$ does not divide $2 - 1$. We already have that $3 \rightarrow 7$ is an arc of $\Gamma_N(G)$, and hence consider both possibilities, namely, when $ 3 \leftarrow 7$ is an arc or not an arc of $\Gamma_N(G)$. If $ 3 \leftarrow 7$ is not an arc of $\Gamma_N(G)$, then $G$ has disconnected prime graph, and hence $G$ is a Frobenius group with N-prime graph $2 \rightleftarrows 3 \rightarrow 7$. Thus, by \cite[main theorem]{PV26}, $G$ is one of the following:
\[
C_7^n \rtimes_{Fr} C_6, \quad
C_7^2 \rtimes_{Fr} \operatorname{SL}_2(3), \quad
C_7^{2n} \rtimes_{Fr} (C_3 \times Q_8), \quad C_7^2 \rtimes_{Fr} \operatorname{SL}_2(3) \cdot C_2
\] 
where $n \geq 1$. Therefore, $G$ contains an element of order $2$ which normalizes Sylow $7$-subgroup of $G$ and using \Cref{Arcs}(ii) we get that $2 \rightarrow 7 \in \Gamma_N(G)$, a contradiction. Now, if $3 \leftarrow 7$ is an arc of $\Gamma_N(G)$, i.e. $3 \rightarrow 7$ and $3 \leftarrow 7$ both are arcs of $\Gamma_N(G)$, then $2 \rightarrow 7$ is also an arc of $\Gamma_N(G)$ as $G$ contains an element $a$ of order $21$ which implies $2$ divides $|N_G(a)|$, a contradiction. Therefore, in both cases, we get contradiction and hence $2 \rightarrow 7 \in \Gamma_N(G)$, proving the claim.
 
If $\pi(G) = \{2, 3, 13\}$, then $G$ contains an element of order $13$ and thus by  \Cref{Arcs}(iii), both the arcs $2 \rightarrow 13$ and $3 \rightarrow 13 \in \Gamma_N(G)$. Also, we already have $2 \leftrightarrows 3  \in \Gamma_N(G)$ and thus the only possible N-prime graphs are $\fl{q}$ - $\fl{s}$ and $\fl{x*}$. These are realized by the indicated groups in \Cref{fig:Examples}, except the graph 
$\fl{x*}$, which remains undecided.

\item[Case IV] $\underline{|\pi(G)| =4 \ \text{or} \ 5}.$\\
If $\pi(G) = \{2, 3, 5, 7\}$, by  Subcase I of Case II of \Cref{Metanilpotent}, we have that $2 \rightleftarrows 3$, $2 \rightleftarrows 7$, $3 \rightleftarrows 5$ and $5\rightleftarrows 7  \in \Gamma_N(G)$. Also, by \Cref{Arcs}(iii), we get that $3 \rightarrow 7$ and $2 \rightarrow 5 \in \Gamma_N(G).$ There remain four possible N-prime graphs with this prime spectrum. Three of them, namely $\fl{t}$, $\fl{u}$ and $\fl{v}$, are realizable by metanilpotent USR groups, as listed in  \Cref{fig:Examples} and the prime graph of remaining one N-prime graph is same as the prime graph of group which was discarded in Subcase I of Case II of \Cref{Metanilpotent}.  

 In similar way, we obtain that if $\pi(G) = \{2, 3, 5, 13\}, \{2, 3, 7, 13\}$ or $\{2, 3, 5, 7, 13\}$, then the possible N-prime graphs are either $\fl{w}$ and $\fl{x}$, which are realized by the indicated groups in \Cref{fig:Examples} or one of  $\fl{y*}$ and $\fl{z*}$, which remain undecided.

\end{description}
\end{proof}

We note that the N-prime graphs left undecided here are precisely those whose corresponding prime graphs were left undecided in \Cref{Metanilpotent}. And as we pointed out in $\Cref{Remain}$, we can observe that the N-prime graphs $\fl{y*}$ and $\fl{z*}$ are realized or not realized simultaneously and hence we can say, we are left with two graphs which remain undecided. 
\begin{center}
	\begin{tabular}{|c|c|c|c|}
	\hline
	&Class $C$ of groups& Directed graph $\bm{\Gamma_N}$& $G \in C$ such that $\Gamma_N(G) = \bm{\Gamma_N} $\\
	
	\hline		
	
	\multirow{2}{*}{\textbf{(I)}}
	& \multirow{2}{*}{Frobenius}
	& \fl{c}, \fl{d}, \fl{f}, \fl{h}, \fl{i}, \fl{m}, \fl{q}& as in  \Cref{fig:Examples}\\
	& & $\Gamma_N^1 = (5\leftrightarrows2 \rightarrow 3)$ &$C_3^4 \rtimes_{Fr} \langle x, y~ |~ x^5 = y^4 = 1, x^y = x^{-1} \rangle$ \\
	\hline
	
	\multirow{6}{*}{	\textbf{(II)}}
	& \multirow{6}{*}{$2$-Frobenius}
	& \fl{d}& $C_2^2 \rtimes (C_3 \rtimes_{Fr} C_2)$\\
	
	& & \fl{f} & $C_2^4 \rtimes (C_5 \rtimes_{Fr} C_4)$\\
	
	&  & \fl{h} & $C_3^6 \rtimes (C_7 \rtimes_{Fr}C_3)$\\
	
	&  & \fl{i} & $C_3^4 \rtimes (C_5 \rtimes_{Fr} C_2)$\\
	
	&  & \fl{m} & $C_2^6 \rtimes (C_7 \rtimes_{Fr} C_6)$\\
	
	&  & \fl{q} & $C_2^{12} \rtimes (C_{13} \rtimes_\Frob C_6)$\\
	&   & $\Gamma_N^2 = (2 \leftrightarrows 3 \rightarrow 7)$ & $C_2^{6} \rtimes (C_{7} \rtimes_\Frob C_3)$\\
	\hline
	\multirow{5}{*}{	\textbf{(III)} }
	& \multirow{5}{*}{Nilpotent-by-abelian}
	& \fl{a}- \fl{e}, \fl{h}, \fl{k}-\fl{s} & as in \Cref{fig:Examples}\\
	&& \fl{f}& $C_5 \rtimes_{Fr} C_4$\\
	& & \fl{g} & $(C_5 \rtimes_{Fr} C_4) \times C_2$ \\
	
	&& \fl{i}& $C_5^2 \rtimes_{Fr} C_6$\\
	& & \fl{j} & $(C_5^2 \rtimes_{Fr} C_6) \times C_2$   \\
	\hline
	\textbf{(IV)} &Metabelian  & \fl{a}-\fl{s} & as in 	\textbf{(III)}  \\
	\hline
	\multirow{2}{*}{	\textbf{(V)}}
	& \multirow{2}{*}{Abelian-by-cyclic}
	& \fl{a}-\fl{k}, \fl{m}-\fl{r} & as in \textbf{(III)} \\
	& & \fl{l} & $(C_{15} \rtimes C_4) \times C_2 =$ SG$[60,7] \times C_2$\\
	\hline
	
	\multirow{2}{*}{\textbf{(VI)}}
	& \multirow{2}{*}{Cyclic-by-abelian}
	& \fl{a}, \fl{b}, \fl{d}-\fl{h}, \fl{k}, \fl{m}-\fl{s} & as in 	\textbf{(III)}\\
	&& \fl{l} & as in \textbf{(V)} \\
	\hline
	\textbf{(VII)}&Metacyclic  &  \fl{a}, \fl{b}, \fl{d}-\fl{h}, \fl{k}-\fl{r} & as in 	\textbf{(III)}\\
	\hline
	\textbf{(VIII)}&MP*  &  \fl{a} - \fl{h},  \fl{k} - \fl{s}& as in \textbf{(III)}\\
	\hline
	
\end{tabular}
\captionof{table}{Groups realizing N-prime graphs}
\label{N-primegraphssubclasses}
\end{center}

\begin{proposition}
If $G$ is a finite non-trivial USR group, then the following statements hold. 
\begin{itemize}
	
\item[(i)]  If $G$ is Frobenius, then $\Gamma_N(G)\in \{ \fl{c}, \fl{d}, \fl{f}, \fl{h}, \fl{i}, \fl{m}, \fl{q}, \Gamma_N^1  \}$.
\item[(ii)] If $G$ is $2$-Frobenius, then $\Gamma_N(G)\in \{  \fl{d}, \fl{f}, \fl{h}, \fl{i}, \fl{m}, \fl{q}, \Gamma_N^2 \}$.
\item[(iii)] If $G$ is nilpotent-by-abelian, then $\Gamma_N(G)\in \{  \fl{a} - \fl{s} \}$.
\item[(iv)] If $G$ is metabelian, then $\Gamma_N(G)\in \{  \fl{a} - \fl{s} \}$.
\item[(v)] If $G$ is abelian-by-cyclic, then $\Gamma_N(G)\in \{  \fl{a} - \fl{r} \}$.
\item[(vi)] If $G$ is cyclic-by-abelian, then $\Gamma_N(G)\in \{  \fl{a}, \fl{b}, \fl{d} - \fl{h}, \fl{k} - \fl{s} \}$.
\item[(vii)] If $G$ is metacyclic, then $\Gamma_N(G)\in \{ \fl{a}, \fl{b}, \fl{d} - \fl{h}, \fl{k} - \fl{r}  \}$.
\item[(vii)] If $G$ is MP*, then $\Gamma_N(G)\in \{ \fl{a} - \fl{h}, \fl{k} - \fl{s}  \}$.

\end{itemize}
Here,  $\Gamma_N^1 = (5 \leftrightarrows 2 \rightarrow 3)$, $\Gamma_N^2 = (2 \leftrightarrows 3 \rightarrow 7)$ and the graphs $\fl{a} - \fl{s}$ are depicted in \Cref{fig:Examples}.  

Moreover, every graph listed in the respective sets are realized as the N-prime graph of a  group belonging to the corresponding class.

\end{proposition}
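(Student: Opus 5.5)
The plan is to reduce each item to the corresponding prime-graph classification, and then pin down the arcs using \Cref{Arcs}. For every class $C$ in (i)--(viii), the prime graph $\Gamma(G)$ is already restricted by \Cref{FrobeniusUSR}, \Cref{2-frobenius}, \Cref{nilpotent-by-abelian}, \Cref{abelian-by-cyclic}, \Cref{cyclic-by-abelian}, \Cref{metacyclic} and \Cref{MP*}. The prime graph is recovered from $\Gamma_N(G)$ by keeping exactly the double arcs. So for each admissible prime graph I will list the N-prime graphs lying over it.

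\textbf{Metanilpotent classes (iii)--(viii).} All of these groups are metanilpotent; for MP* this is \Cref{MPmeta}. Hence \Cref{N-prime USR} already confines $\Gamma_N(G)$ to \fl{a}--\fl{x}, \fl{x*}, \fl{y*}, \fl{z*}. Its proof moreover shows that, once the prime graph is fixed, the remaining single arcs are forced:
\begin{itemize}
\item $2\to5$, $3\to7$, $2\to7$ and $2,3\to13$ always occur, by \Cref{Arcs}(iii) and the claim in Case III of that proof;
\item a single arc between $3$ and $5$, or $3\leftarrow 2$ without the reverse, is impossible by \Cref{Arcs}(i).
\end{itemize}
Consequently each admissible prime graph in that proof has exactly one N-prime graph above it. The only exception is $\{2,3\}$ with no edge, which splits into \fl{c} and \fl{d}.

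It then suffices to translate the prime-graph lists. For example, prime graphs (a)--(r) correspond to \fl{a}--\fl{s}, the extra letter arising from the split at $\{2,3\}$. For cyclic-by-abelian groups, excluding (h) and (i) removes \fl{i} and \fl{j}. There is one extra point to check: whether \fl{c} (graph $2\ 3$ with no arcs, as for $A_4$) can occur in the cyclic-by-abelian, metacyclic and MP* classes. The MP* case is fine since $A_4$ is MP* by \Cref{PrimitiveGroup}. For cyclic-by-abelian groups I would argue as follows. If $2\not\to3$ and $3\not\to2$, then $G_3$ is normal and $G_2$ acts without normalizing any subgroup of order $3$. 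A cyclic normal $N$ with $G/N$ abelian then forces the $3$-part to lie in $N$, which is cyclic, and so it is normalized by a $2$-element. This contradicts the absence of the arc, so \fl{c} is excluded there.

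Realizability in all classes comes from the groups in \Cref{N-primegraphssubclasses}. For each listed group I just compute the arcs. For instance, in $C_5\rtimes_{Fr}C_4$ the $C_4$ normalizes $C_5$ but no $5$-element normalizes a subgroup of order $2$.

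\textbf{Frobenius and 2-Frobenius, (i) and (ii).} Here the prime graph is disconnected, with at most $3$ vertices by \Cref{Frobenius3Vertices}, and is given by \Cref{FrobeniusUSR} and \Cref{2-frobenius}. Since the components carry no double arcs, I only need to decide the single arcs between components.
\begin{itemize}
\item \textbf{Frobenius.} Each prime of the complement normalizes a Sylow subgroup of the kernel, while no kernel element normalizes a cyclic subgroup of the complement. Hence all arcs point from complement primes into kernel primes. Using the list in \cite{PV26}, this gives \fl{c} or \fl{d} ($C_2^2\rtimes C_3$ versus $S_3$), \fl{f}, \fl{h}, \fl{i}, \fl{m}, \fl{q}, and $\Gamma_N^1$ for $\Gamma_1$. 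In $\Gamma_1$ the kernel is $C_3^4$, so the arcs are $2\to3$ and $5\to3$; the arc $2\to5$ inside the complement completes the double arc.
\item \textbf{2-Frobenius.} Each group has upper complement $C$, upper kernel $U$ and lower kernel $L$. Elements of $C$ normalize subgroups of $U$. Elements of $L$ cannot normalize cyclic subgroups of order $|U|$-prime, because of the Frobenius action. The delicate point is whether $C$-elements normalize cyclic subgroups of $L$, for example in $\Gamma_N^2$ versus \fl{m}. I would decide this from the explicit groups in the table: for $C_2^6\rtimes(C_7\rtimes C_3)$ the order-$3$ elements centralize some involutions, which gives $3\leftrightarrows2$.
\end{itemize}

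The main obstacle is the 2-Frobenius item. First, one must show that no other arc patterns occur, for instance \fl{c}-type or \fl{d}-type graphs with reversed arcs. Second, one must show that $\Gamma_N^2$, whose prime graph is (l) but which is not among the metanilpotent graphs, is the only new graph. For this I would use \Cref{GKLNS} together with the list of upper Frobenius quotients $G/F(G)$ from \cite{PV26}. Then \Cref{Arcs}(ii),(iii) decide each remaining arc case by case.
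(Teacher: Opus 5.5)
Your reduction of (iii)--(viii) to the subclass prime-graph lists, \Cref{N-prime USR} and \Cref{Arcs}(i) is the paper's route. However, the arc arguments you add are wrong in two places, and (ii) is not proved.

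In (i), an element of order $q$ normalizing the Sylow $p$-subgroup of the kernel does not give an arc $q\to p$; the arc needs a subgroup of order $p$ normalized by such an element. This is how you arrive at $5\to3$ for $C_3^4\rtimes_{\Frob}(C_5\rtimes C_4)$, which is false. Kernel elements never normalize a subgroup of complement-prime order, so by \Cref{Arcs}(i) a complement prime $q$ can point to a kernel prime $p$ only if $q\mid p-1$, and $5\nmid 2$. Indeed $\Gamma_N^1$ has no such arc. Your rule would likewise put $3\to2$ into $A_4$ and $3\to5$ into $C_5^4\rtimes_{\Frob}(C_3\times Q_8)$, contradicting your own answers \fl{c} and \fl{i}. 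The correct criterion is that $q\to p$ holds if and only if $q\mid p-1$; the converse follows from \Cref{Arcs}(ii) applied to the kernel.

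Your exclusion of \fl{c} for cyclic-by-abelian groups starts from a false premise. If $G_3$ were normal, an involution would normalize it and \Cref{Arcs}(ii) would give $2\to3$ at once. In fact $G$ must be Frobenius, since $2$-Frobenius groups are not metanilpotent. Its kernel $K$ is a $2$-group, because a $3$-group kernel would be normalized by an involution. So $G_2$, not $G_3$, is normal, and nothing places $G_3$ inside $N$. Argue instead that $K=[K,G_3]\le G'\le N$ is cyclic, so $\Aut(K)$ is a $2$-group and $G_3$ cannot act fixed-point-freely on $K$. Also, \Cref{PrimitiveGroup} gives only a necessary condition, so you must check directly that $A_4$ is MP* (it has the Magnus property).

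The main gap is (ii), which you leave as an open obstacle. It is less case-by-case than you expect. Applying \Cref{Arcs}(i) and (iii) to the six prime graphs of \Cref{2-frobenius} fixes every arc except $2\to3$ over (c) and $2\to7$ over (l). So the candidates are exactly \fl{c}, \fl{d}, \fl{f}, \fl{h}, \fl{i}, \fl{m}, $\Gamma_N^2$ and \fl{q}. Note that \fl{m} and $\Gamma_N^2$ differ only in the arc $2\to7$, which is governed by whether the upper complement has even order. Both contain $2\leftrightarrows3$, so your remark that order-$3$ elements centralize involutions does not separate them.

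The one real elimination, which you never carry out, is that of \fl{c}. By \Cref{GKLNS}, when $\pi(G)=\{2,3\}$ the upper kernel is a $3$-group, the upper complement is a nontrivial $2$-group, and $F_1(G)$ is a $2$-group (its order is coprime to that of the upper kernel). Take a Sylow $3$-subgroup $P$ of $F_2(G)$. The Frattini argument gives $G=F_1(G)N_G(P)$, so $N_G(P)$ maps onto $G/F_2(G)$ and has even order. It therefore contains an involution normalizing $P$, and \Cref{Arcs}(ii) yields $2\to3$. The remaining candidates are realized by the groups in \Cref{N-primegraphssubclasses}.
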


\begin{proof}

	In (i)-(vii), the graphs mentioned in the sets are realized by respective subclasses of finite USR groups as depicted in \Cref{N-primegraphssubclasses}. We eliminate the remaining possibilities.

(i) follows from \cite[Main theorem]{PV26} and \Cref{Arcs}(ii).  (iii) and (iv) follow from \Cref{nilpotent-by-abelian}, \Cref{N-prime USR}, \Cref{N-primegraphssubclasses} and \Cref{Arcs}(i).  (v) follows from \Cref{abelian-by-cyclic}, \Cref{N-prime USR}, \Cref{N-primegraphssubclasses} and \Cref{Arcs}(i). 
Also, \Cref{cyclic-by-abelian} and \Cref{metacyclic} respectively imply (vi) and (vii) along with \Cref{Arcs}(i), \Cref{N-prime USR}, \Cref{N-primegraphssubclasses} and the fact that for any metacylic group or cyclic-by-abelian group $G$, $2 \rightarrow 3 \in \Gamma_N(G)$ and hence $\fl{c} = (2 ~ ~3)$ is not realizable by these classes.
 (viii) follows from \Cref{MP*}, \Cref{N-prime USR}, \Cref{N-primegraphssubclasses} and \Cref{Arcs}(i). So, need to prove (ii).

For (ii), using \Cref{2-frobenius}, we have that prime graphs realizable by $2$-Frobenius USR groups are (c), (e), (g), (h), (l) and (p).  

If $|\pi(G)| = 2$, then, by \Cref{2-frobenius} along with (i) and (iii) of \Cref{Arcs}, we have that the possible N-prime graphs of $2$-Frobenius USR groups are \fl{c}, \fl{d}, \fl{f} and \fl{h}. Out of these \fl{d}, \fl{f} and \fl{h} are realized by $2$-Frobenius USR groups as specified in \Cref{N-primegraphssubclasses}. Now, let us suppose that there exists a $2$-Frobenius group $G$ such that $\Gamma_N(G) = (2~~3)$. Using \cite[Proposition~3.3]{PdRV25}, we get that  $\Gamma_N(G/F(G)) = (2~~3)$. Since $G/F(G)$ is a Frobenius group, then $G/F(G)$ is one of the groups in $2(a)$ of \cite[Main theorem]{PV26}, thus $F(G)$ is a $3$-group which implies that there exists a $3$-group normalized by an element of order $2$. Using \Cref{Arcs}(ii), we get that $2 \rightarrow 3 \in \Gamma_N(G)$, a contradiction.

If $|\pi(G)| = 3$,  then again using \Cref{2-frobenius} along with (i) and (iii) of \Cref{Arcs}, we have that the possible N-prime graphs of $2$-Frobenius USR groups are \fl{i}, \fl{m}, \fl{q} and $\Gamma_N^2$. All these graphs are realized by $2$-Frobenius USR group as specified in \Cref{N-primegraphssubclasses}. $\Gamma_N^2$ is realized by the $2$-Frobenius USR group $C_2^6 \rtimes (C_7 \rtimes_{Fr} C_3)$, whose upper Frobenius group has matrix realization given by reducing  
\[
 A=
\begin{pmatrix}
	0 & 0 & 0 & 0 & 0 & -1 \\
	1 & 0 & 0 & 0 & 0 & -1 \\
	0 & 1 & 0 & 0 & 0 & -1 \\
	0 & 0 & 1 & 0 & 0 & -1 \\
	0 & 0 & 0 & 1 & 0 & -1 \\
	0 & 0 & 0 & 0 & 1 & -1 \\
	
\end{pmatrix},
\quad
B= \begin{pmatrix}
0 & 0 & 0 & 1 & 0 & 0 \\
1 & 0 & 0 & 0 & 0 & 0 \\
0 & 0 & 0 & 0 & 1 & 0 \\
0 & 1 & 0 & 0 & 0 & 0 \\
0 & 0 & 0 & 0 & 0 & 1 \\
0 & 0 & 1 & 0 & 0 & 0 \\

\end{pmatrix}
.
\]
modulo $2$ and proceeding as in Subcase II of Case II in \Cref{2-frobenius}.
\end{proof}

\begin{remark}
	One can observe from \Cref{N-prime USR} that if prime graphs of two metanilpotent USR groups are same, then their N-prime graphs will also be same except when prime graph is $(2~~3)$.
\end{remark}

 \section{The Prime Graph Question for USR groups}\label{SectionPQ}
 
One of the important problems in the study of integral group rings has been First Zassenhaus problem, originally posed as conjecture which was answered negatively in \cite{EM18}. As an approximation to the first  Zassenhaus conjecture, Kimmerle  \cite{Kim06} had posed the question, often called as the prime graph question (PQ), namely, for a finite group $G$, do the prime graphs of $G$ and that of group of normalized units of its integral group ring coincide. (PQ) is answered affirmatively for all finite solvable and Frobenius groups. For non-solvable groups, a reduction to almost simple groups effectively yields positive answers for many cases, as can be found in \cite{KK17}. In this section, we attempt (PQ) for USR groups using the reduction result which states that the (PQ) has a positive answer for a group $G$, if it has a positive answer for all almost simple images of $G$ (see \cite[Theorem~2.1]{KK17}). Recall that a group $G$ is called almost simple if it is sandwiched between a non-abelian simple group and its automorphism group, i.e., there is a non-abelian simple group $S$ such that $S \simeq \Inn(S) \leqslant G \leqslant \Aut(S)$, where $\Inn(S)$ denotes the group of inner automorphisms of $S$. In this case, $S$ is called the socle of $G$. Clearly, each non-abelian simple group $S$ deﬁnes its own family of almost simple groups parametrized by the conjugacy classes of subgroups of $\Out(S) = \Aut(S)/\Inn(S)$.

In \cite{BKMdR24}, the prime graph question was answered in positive for finite rational groups and for most inverse semi-rational groups. In \cite{Tre17}, Trefethen gave a list of all non-abelian composition factors of quadratic rational groups which contains class of USR groups. Using this list and reduction theorem of Kimmerle and Konovalov, we prove the following result:
\begin{proposition} Let $G$ be a finite USR group such that there is no epimorphism $G \rightarrow H$, where $H$ is an almost simple group whose socle is $M$, $F_4(2)$ or ${}^2E_6(2)$. Then, (PQ) has a positive answer for $G$.  
\end{proposition}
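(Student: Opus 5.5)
By the reduction theorem of Kimmerle and Konovalov \cite[Theorem~2.1]{KK17}, it suffices to show that (PQ) has a positive answer for every almost simple image $H$ of $G$. So fix an epimorphism $G\to H$ with $H$ almost simple and socle $S$. The class of USR groups is quotient closed: if $x^j\sim x$ or $x^j\sim x^m$ in $G$, the same holds for the images in $G/N$, and $m$ remains coprime to $\exp(G/N)$. Hence $H$ is again USR, so in particular quadratic rational. Then $S$, being a non-abelian composition factor of $H$, lies in Trefethen's list \cite{Tre17} of non-abelian composition factors of quadratic rational groups. By hypothesis $S\notin\{M, F_4(2), {}^2E_6(2)\}$.

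The plan is to go through the remaining entries of Trefethen's list: alternating groups $A_n$, the finitely many groups of Lie type (such as $\mathrm{PSL}(2,q)$ for small $q$, $\mathrm{PSL}(3,4)$, $\mathrm{PSU}(4,3)$, $\mathrm{Sp}(6,2)$, $O_8^+(2)$ and similar), and the sporadic groups. For each socle $S$ we need (PQ) for every $H$ with $S\le H\le\Aut(S)$, or at least for those $H$ that are USR. I would proceed in three steps.

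\begin{enumerate}
\item For $S=A_n$: (PQ) is known for all almost simple groups with socle $A_n$. This follows from the work cited in \cite{KK17}, specifically the results of Bächle--Margolis and others for symmetric and alternating groups, together with the fact that $\Out(A_n)$ is small.
\item For the groups of Lie type and the sporadic groups in the list: quote the existing literature. This includes the HeLP-method verifications for sporadic groups and their automorphism groups, the results for $\mathrm{PSL}(2,q)$ and their extensions, and the case-by-case checks summarized in \cite{KK17} and in \cite{BKMdR24}, where the same strategy was used for rational and inverse semi-rational groups. Where a socle is not covered for all of $\Aut(S)$, use the USR condition to restrict which $H$ can occur. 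The condition $[\Aut(\langle g\rangle):B_H(g)]\le 2$ from \Cref{equivalentUSR} forces enough outer automorphisms to be present (for instance, to fuse the classes of elements of large prime order). This reduces to the specific extensions for which (PQ) is known, or which can be checked with HeLP in GAP.
\item Conclude that $(PQ)$ holds for every almost simple quotient, hence for $G$.
\end{enumerate}

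The main obstacle is the bookkeeping in step (2): making sure that every socle on Trefethen's list other than the three excluded groups, together with every relevant USR almost simple extension, is genuinely covered by a published (PQ) result. For any gap, I would first try the USR restriction on $H$, and then a direct HeLP computation. The three excluded socles are precisely those where neither approach is currently available, which is why they appear in the hypothesis.
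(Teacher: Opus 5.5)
Your outline matches the paper's: reduce to almost simple images via \cite[Theorem~2.1]{KK17}, use that USR groups are quotient closed, then go through Trefethen's list \cite{Tre17} socle by socle. Two steps do not hold up as written, however.

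Step (1) asserts a result that is not available. To my knowledge, (PQ) is not known for all almost simple groups with socle $A_n$; for $A_n$ itself it is settled only for small degrees (B\"achle--Caicedo), and the smallness of $\Out(A_n)$ does not help. The alternating groups are the one infinite family on Trefethen's list, so this is exactly where the USR hypothesis on $H$ must be used, not bypassed.

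The paper restricts to almost simple \emph{USR} groups with alternating socle and follows the argument of \cite[Theorem~G]{BKMdR24}. There the character-theoretic hypothesis is what makes this case tractable. For instance, $S_n$ is rational, hence covered by the positive answer to (PQ) for rational groups.

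For $H=A_n$ the USR condition is very restrictive. An element whose cycle type consists of distinct odd parts $h_1,\dots,h_k$ satisfies $x^j\sim x$ in $A_n$ iff the Jacobi symbol $\bigl(\frac{j}{h_1\cdots h_k}\bigr)=1$. Hence one needs $\bigl(\frac{m}{h_1\cdots h_k}\bigr)=-1$ for every such partition of $n$ with non-square product.

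The sporadic socles other than $M$ are handled by the paper through the same argument. They do not come from complete HeLP coverage of sporadic almost simple groups, which does not exist.

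For the Lie-type socles, the proof \emph{is} the bookkeeping you postpone. Trefethen's list has $32$ Lie-type factors, and the paper matches each of the $30$ other than $F_4(2)$ and ${}^2E_6(2)$ to a specific published result:
\begin{itemize}
\item \cite[Theorems~A,~B]{BM17a} for $L_2(7)$, $L_2(11)$, $L_4(3)$, $U_3(5)$, $U_3(8)$, $U_4(3)$, $U_5(2)$, $S_4(4)$, $S_6(2)$, $O_8^+(2)$, $G_2(3)$, ${}^3D_4(2)$;
\item \cite[Theorem~A]{BM19} for $L_2(16)$, $L_3(4)$, $U_3(4)$, ${}^2F_4(2)'$;
\item \cite{CM21} for $U_4(2)$, $U_5(4)$, $U_6(2)$, $S_6(3)$, $S_8(2)$, $O_7(3)$, $O_8^+(3)$, $O_8^-(2)$, $O_{10}^-(2)$, $G_2(4)$;
\item \cite{Gil15}, \cite{EM25}, \cite{BK11}, \cite[Theorem~3.1]{KK17} for $L_2(8)$, $L_2(27)$, $L_3(3)$, $U_3(3)$, respectively.
\end{itemize}
Your fallback (``restrict via USR, then run HeLP'') is not carried out for any specific socle. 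Without this enumeration, or an equivalent one, the statement is not established.
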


\begin{proof} Since images of USR groups are again USR, it suffices to establish a positive answer for all almost simple USR groups whose socle is not $M$,  $F_4(2)$ or ${}^2E_6(2)$. Going by the arguments in the proof of \cite[Theorem~G]{BKMdR24}, one can prove that the (PQ) has a positive answer for all almost simple USR groups with an alternating socle or sporadic socle other than the simple monster group $M$. It remains to consider almost simple USR groups with a simple composition factor of Lie type and for that we go through the list  of the $32$ composition factors of Lie type in \cite{Tre17}. 
Using \cite[Theorem~A, Theorem~B]{BM17a}, we get a positive answer to the Prime Graph Question for all almost simple groups whose socles are $L_2(7), L_2(11), L_4(3), U_3(5), U_3(8), U_4(3), U_5(2), S_4(4), S_6(2), O_8^{+}(2), G_2(3)$ and ${}^3D_4(2)$. Likewise, $L_2(16), L_3(4), U_3(4)$ and ${}^2F_4(2)'$ yield positive answer in view of \cite[Theorem~A]{BM19}. By \cite[Corollary~1.3, Corollary~1.4, Lemma~6.8]{CM21}, we have that $U_4(2), U_5(4), U_6(2),  S_6(3), S_8(2), O_7(3), \linebreak O_8^+(3), O_8^-(2), O_{10}^-(2)$ and  $G_2(4)$ have a positive answer to the (PQ). Also, $L_2(8), L_2(27), L_3(3)$ and  $U_3(3)$ yield positive answer in view of \cite[Theorem~1]{Gil15}, \cite[Theorem~1.2]{EM25}, \cite[Corollary~4.2]{BK11} and \cite[Theorem~3.1]{KK17}, respectively.
\end{proof}

\newcommand{\etalchar}[1]{$^{#1}$}
\providecommand{\bysame}{\leavevmode\hbox to3em{\hrulefill}\thinspace}
\providecommand{\MR}{\relax\ifhmode\unskip\space\fi MR }
\providecommand{\MRhref}[2]{%
	\href{http://www.ams.org/mathscinet-getitem?mr=#1}{#2}
}
\providecommand{\href}[2]{#2}

\end{document}